\numberwithin{equation}{section}
\newtheorem{theorem}{Theorem}[section]
\newtheorem{lemma}[theorem]{Lemma}
\newtheorem{corollary}[theorem]{Corollary}
\newtheorem{proposition}[theorem]{Proposition}
\theoremstyle{definition}
\newtheorem{remark}[theorem]{Remark}
\newtheorem{definition}[theorem]{Definition}
\theoremstyle{remark}
\newtheorem{notation}{Notation}
\definecolor{blue}{rgb}{0,0,0.45}
\definecolor{red}{rgb}{0.7,0,0}
\begin{document}
\title[approximation by entire IFFD on the real line]{EXPONENTIAL
APPROXIMATION IN VARIABLE EXPONENT LEBESGUE SPACES ON THE REAL LINE}
\author{Ramazan Akg\"{u}n}
\maketitle

\begin{quotation}
\textbf{Abstract} Present work contains a method to obtain Jackson and
Stechkin type inequalities of approximation by integral functions of finite
degree (IFFD) in some variable exponent Lebesgue space of real functions
defined on $\boldsymbol{R}:=\left( -\infty ,+\infty \right) $. To do this we
employ a transference theorem which produce norm inequalities starting from
norm inequalities in $\mathcal{C}(\boldsymbol{R})$, the class of bounded
uniformly continuous functions defined on $\boldsymbol{R}$. Let $B\subseteq 
\boldsymbol{R}$ be a measurable set, $p\left( x\right) :B\rightarrow \lbrack
1,\infty )$ be a measurable function. For the class of functions $f$
belonging to variable exponent Lebesgue spaces $L_{p\left( x\right) }\left(
B\right) $ we consider difference operator $\left( I-T_{\delta }\right)
^{r}f\left( \cdot \right) $ under the condition that $p(x)$ satisfies the
Log H\"{o}lder continuity condition and $1\leq \mathop{\rm ess \; inf}%
\limits\nolimits_{x\in B}p(x)$, $\mathop{\rm ess \;
sup}\limits\nolimits_{x\in B}p(x)<\infty $ where $I$ is the identity
operator, $r\in \mathrm{N}:=\left\{ 1,2,3,\cdots \right\} $, $\delta \geq 0$
and%
\begin{equation*}
T_{\delta }f\left( x\right) =\frac{1}{\delta }\int\nolimits_{0}^{\delta
}f\left( x+t\right) dt\text{,\quad }x\in \boldsymbol{R}\text{,\quad }%
T_{0}\equiv I\text{,\quad \quad \quad \quad }(\ast )
\end{equation*}%
is the forward Steklov operator. It is proved that%
\begin{equation*}
\left\Vert \left( I-T_{\delta }\right) ^{r}f\right\Vert _{p\left( \cdot
\right) }\text{,\hspace{2.67in}}(\ast \ast )
\end{equation*}%
is a suitable measure of smoothness for functions in $L_{p\left( x\right)
}\left( B\right) $ where $\left\Vert \cdot \right\Vert _{p\left( \cdot
\right) }$ is Luxemburg norm in $L_{p\left( x\right) }\left( B\right) .$ We
obtain main properties of difference operator $\left\Vert \left( I-T_{\delta
}\right) ^{r}f\right\Vert _{p\left( \cdot \right) }$ in $L_{p\left( x\right)
}\left( B\right) .$ We give proof of direct and inverse theorems of
approximation by IFFD in $L_{p\left( x\right) }\left( \boldsymbol{R}\right)
. $

\textbf{Key Words} Variable exponent Lebesgue space, One sided Steklov
operator, Integral functions of finite degree, Best approximation, Direct
theorem, Inverse theorem, Modulus of smoothness, Marchaud inequality,
K-functional.

\textbf{2010 Mathematics Subject Classifications} 41A10; 41A25; 41A27; 41A65.
\end{quotation}

\section{Introduction}

Some inequalities of Approximation Theory in a Homogenous Banach Spaces
(HBS) can be obtained their uniform-norm counterparts. This information is
known for a long time, (see e.g., \cite{z76} for definition of HBS). This
elegant method was generalized to some variable exponent Lebesgue spaces
functions defined on $\boldsymbol{R}$. (see Theorem 21 of \cite{AkgArx17}).
Generally, these scale of function classes are non-translation invariant
with respect to the ordinary translation $x\rightarrow f\left( x+a\right) $.
Here we give several uniform-norm inequalities on $C(\boldsymbol{R})$ and
apply them to obtain several inequalities of approximation by IFFD in some
variable exponent Lebesgue spaces $L_{p(x)}(\boldsymbol{R})$. Under some
condition on $p(x)$ of $L_{p(x)}(\boldsymbol{R})$ we obtain main
inequalities of exponential approximation by IFFD such as
Jackson-Stechkin-Timan type estimates and equivalence of \textit{K}%
-functional with suitable modulus of smoothness $(\ast \ast )$ given in
abstract for functions of $L_{p(x)}(\boldsymbol{R})$. Note that many results
of approximation by IFFD can be obtained easily their uniform-norm
counterparts in $C(\boldsymbol{R})$.

Consider an entire function $f(z)$ and put $M(r)=\max_{|z|=r}|f(z)|$ for $%
z=x+iy$. We say that an entire function $f$ is of exponential type $\sigma $
if $\limsup_{r\rightarrow \infty }r^{-1}\ln M(r)\leq \sigma $,\quad $\sigma
<\infty .$

The approximation by entire function of finite degree in the real line was
originated in the beginning of twentieth century by Serge Bernstein \cite%
{Ber} and became a separate branch of analysis due to the efforts of many
mathematicians such as N. Wiener and R. Paley \cite{RN}, N.I. Ahiezer \cite%
{Ak}, S.M. Nikolskii \cite{Ni}, I.I. Ibragimov \cite{II3}, A. F. Timan \cite%
{AFT}, M. F. Timan \cite{MFT}, R. Taberski \cite{T81,T86}, F.G. Nasibov \cite%
{NF}, V. Yu. Popov \cite{Po}, A. A. Ligun \cite{LiDo}, and others.

Studying function spaces with variable exponent is now an extensively
developed field after their applications in elasticity theory \cite{vz86},
fluid mechanics \cite{krr-mr-96, mr00}, differential operators \cite%
{ld-mr02,mr00}, nonlinear Dirichlet boundary value problems \cite{zok-jr91},
nonstandard growth \cite{vz86}, and variational calculus. See the books \cite%
{UF13,DHHR11,Sh12} for more references. Nowadays many mathematician solved
many problems for the approximation of function in these type spaces defined
on $\left[ 0,2\pi \right] \subset \boldsymbol{R}$ (see e.g., \cite{AK2,AK3,
GI2, Ja, Ja1,AI}, \cite{acis,ascs,aosss,ahak,ayey}, \cite%
{ra11u,AK1,AkgArx17,bbsv06,B46},\cite{day,DQR03,xf-dz01,hh76,ja2,ja3,it16},%
\cite{iy,k,worl31,sgs94,iis79,ssv}). In this paper we propose generalized
our last results in \cite{AG} which we obtained a direct and inverse
theorems for approximation by entire functions of finite degree in variable
exponent Lebesgue spaces on the whole real axis $\boldsymbol{R}$ with%
\begin{equation}
\sup\limits_{0<h\leq \delta }\Vert (I-T_{h})f\Vert _{{p(\cdot )}}
\label{bnm}
\end{equation}%
as modulus of continuity $\Omega _{1}(f,\delta )_{{p(\cdot )}}$. Instead of (%
\ref{bnm}), here we will use%
\begin{equation}
\Vert (I-T_{\delta })^{r}f\Vert _{{p(\cdot )}}  \label{bnm1}
\end{equation}%
as modulus smoothness $\Omega _{r}(f,\delta )_{{p(\cdot )}}$ and we obtain
stronger Jackson inequality than obtained in \cite{AG}.

Let $B\subseteq \boldsymbol{R}$ be a measurable set and $p(x):B\rightarrow
\lbrack 1,\infty )$ be a measurable function. We define $\tilde{P}\left(
B\right) $ as the class of measurable functions $p(x)$ satisfying the
conditions 
\begin{equation}
1\leq p_{B}^{-}\text{:=}\mathop{\rm ess \; inf}\limits\nolimits_{x\in B}p(x)%
\text{,\quad }p_{B}^{+}\text{:=}\mathop{\rm ess \; sup}\limits\nolimits_{x%
\in B}p(x)<\infty .  \label{gag21}
\end{equation}%
We also set $p^{-}:=p_{\boldsymbol{R}}^{-}$ and $p^{+}:=p_{\boldsymbol{R}%
}^{+}$. We define the $L_{p(\cdot )}(B)$ as the set of all functions $%
f:B\rightarrow \boldsymbol{R}$ such that%
\begin{equation}
I_{p(\cdot ),B}\left( \frac{f}{\lambda }\right) :=\int_{B}\left\vert \frac{%
f(y)}{\lambda }\right\vert ^{p(y)}dy<\infty  \label{Ip}
\end{equation}%
for some $\lambda >0$. We set $I_{p(\cdot )}\left( f\right) :=I_{p(\cdot ),%
\boldsymbol{R}}\left( f\right) $. The set of of functions $L_{p(\cdot )}(B)$%
, with norm 
\begin{equation*}
\Vert f\Vert _{p(\cdot ),B}:=\inf \left\{ \eta >0:I_{p(\cdot ),B}\left( 
\frac{f}{\eta }\right) <1\right\}
\end{equation*}%
is Banach space. We set $L_{p(\cdot )}:=L_{p(\cdot )}(\boldsymbol{R})$.

For $i\in \mathrm{N}$, all constants $c_{i}\left( x,y,\cdots \right) $ will
be some positive number such that they depend on the parameters $x,y,\cdots $
given in the brackets. Also constants $c_{i}\left( x,y,\cdots \right) $ can
be change only when the parameters $x,y,\cdots $ change. Absolute constants $%
\mathbf{c}_{1},\mathbf{c}_{2},\ldots $ will not change in each occurance.

\begin{definition}
\label{Def1} For a measurable set $B\subseteq \boldsymbol{R}$, a measurable
function $p(\cdot ):B\rightarrow \boldsymbol{R}$ is said to locally log-H%
\"{o}lder continuous on $B$ if there is a positive constant $c_{1}\left(
p\right) $ such that%
\begin{equation}
|p(x)-p(y)|\log \left( e+1/|x-y|\right) \leq c_{1}\left( p\right) <\infty
\label{gag22}
\end{equation}%
for any $x,y\in B$. We say that $p$ satisfies log-H\"{o}lder decay condition
if there is a constant $c_{2}\left( p\right) >0$ and $p_{\infty }>1$ such
that%
\begin{equation}
|p(x)-p_{\infty }|\log \left( e+|x|\right) \leq c_{2}\left( p\right) <\infty
\label{gag23}
\end{equation}%
for any $x\in B.$

Define the class $P^{Log}\left( B\right) :=\left\{ p\in \tilde{P}\left(
B\right) :\frac{1}{p}\text{ is satisfy (\ref{gag22})-(\ref{gag23})}\right\} $%
. We set $c_{3}\left( p\right) :=\max \left\{ c_{1}\left( p\right)
,c_{2}\left( p\right) \right\} .$
\end{definition}

\begin{definition}
(\cite[p.96]{HH}) Let $\mathbb{N}:\mathbb{=}\left\{ 1,2,3,\cdots \right\} $
be natural numbers and $\mathbb{N}_{0}:=\mathbb{N\cup }\left\{ 0\right\} $.

(a) A family $Q$ of measurable sets $E\subset \boldsymbol{R}$ is called
locally $N$-finite ($N\in \mathbb{N}$) if 
\begin{equation*}
\sum_{E\in Q}\chi _{E}\left( x\right) \leq N
\end{equation*}%
almost everywhere in $\boldsymbol{R}$ where $\chi _{U}$ is the
characteristic function of the set $U$.

(b) A family $Q$ of open bounded sets $U\subset \boldsymbol{R}$ is locally $%
1 $-finite if and only if the sets $U\in Q$ are pairwise disjoint.

(c) Let $U\subset \boldsymbol{R}$ be \ a measurable set and%
\begin{equation*}
A_{U}f:=\frac{1}{\left\vert U\right\vert }\int\limits_{U}\left\vert f\left(
t\right) \right\vert dt.
\end{equation*}

(d) For a family $Q$ of open sets $U\subset \boldsymbol{R}$ we define
averaging operator by 
\begin{equation*}
T_{Q}:L_{loc}^{1}\rightarrow L^{0},
\end{equation*}%
\begin{equation*}
T_{Q}f\left( x\right) :=\sum_{U\in Q}\chi _{U}\left( x\right)
A_{U}f=\sum_{U\in Q}\frac{\chi _{U}\left( x\right) }{\left\vert U\right\vert 
}\int\limits_{U}\left\vert f\left( y\right) \right\vert dy,\quad x\in 
\boldsymbol{R},
\end{equation*}%
where $L^{0}$ is the set of measurable functions on $\boldsymbol{R}$.
\end{definition}

For a measurable set $A\subset \boldsymbol{R}$, symbol $\left\vert
A\right\vert $ will represent the Lebesgue measure of $A$.

We consider Transference Result.

\begin{definition}
For $0<\delta <\infty $, $\tau \in \boldsymbol{R}$, we define family of
Steklov operators%
\begin{equation}
\text{\textsf{$\QTR{sc}{S}$}}_{\delta }f(x)\text{:=}\frac{1}{\delta }%
\int\nolimits_{x-\delta /2}^{x+\delta /2}f\left( t\right) dt\text{=}\frac{1}{%
\delta }\int\nolimits_{-\delta /2}^{\delta /2}f\left( x+t\right) dt\text{%
,\quad }x\in \boldsymbol{R},  \label{steklR}
\end{equation}%
where $f$ is a locally integrable function, defined on $\boldsymbol{R}$.
\end{definition}

The following result was obtained by Drihem for every cubes or balls in $%
\boldsymbol{R}^{d}$. We write below its restricted version with constants.
The proof of this is the same with Theorem 2 of \cite{drh20}.

\begin{proposition}
\label{dri}(\cite{drh20}) Suppose that $p\in P^{Log}\left( \boldsymbol{R}%
\right) $ and $Q$ is a bounded interval of $\boldsymbol{R}$ having Lebesgue
measure greater than $1$. For every $m>0$ there is $c_{4}\left(
m,c_{3}\left( p\right) \right) :=\exp \left( -4mc_{3}\left( p\right) \right)
\in \left( 0,1\right) $ such that%
\begin{equation*}
\left( \frac{c_{4}\left( m,c_{3}\left( p\right) \right) }{\left\vert
Q\right\vert }\int\limits_{Q}\left\vert f\left( y+\tau \right) \right\vert
dy\right) ^{p\left( x\right) }\leq \frac{3^{p^{+}}}{\left\vert Q\right\vert }%
\int\limits_{Q}\left\vert f\left( y+\tau \right) \right\vert ^{p\left(
y+\tau \right) }dy+\frac{3^{p^{+}-1}}{\left( e+\left\vert x\right\vert
\right) ^{m}}+
\end{equation*}%
\begin{equation*}
+3^{p^{+}-1}\int\limits_{Q}\frac{dy}{\left( e+\left\vert y+\tau \right\vert
\right) ^{m}}
\end{equation*}%
holds for all $x\in Q$, $\tau \in \boldsymbol{R}$ and all $f\in L_{p\left(
\cdot \right) }+L_{\infty }\left( \boldsymbol{R}\right) $ with $\left\Vert
f\right\Vert _{p\left( \cdot \right) }+\left\Vert f\right\Vert _{\infty
}\leq 1.$
\end{proposition}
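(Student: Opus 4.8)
The plan is to deduce the stated modular inequality from a single pointwise estimate together with two applications of Jensen's inequality. Write $z=y+\tau$ and $g(y)=|f(y+\tau)|$; since the hypothesis $\left\Vert f\right\Vert _{p(\cdot )}+\left\Vert f\right\Vert _{\infty }\leq 1$ forces $\left\Vert f\right\Vert _{\infty }\leq 1$, we have $0\leq g\leq 1$, so it suffices to work with values $t=g(y)\in \lbrack 0,1]$. The crux is the pointwise inequality
\[
c_{4}\,t\leq t^{\,p(z)/p(x)}+(e+|x|)^{-m/p(x)}+(e+|z|)^{-m/p(x)}\qquad (t\in \lbrack 0,1]),
\]
valid for all $x,z\in \boldsymbol{R}$ with $c_{4}=\exp (-4mc_{3}(p))$. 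Granting this, I would average it over $y\in Q$ (the middle term being constant in $y$), raise the resulting three-term sum to the power $p(x)$ using the convexity bound $(a_{1}+a_{2}+a_{3})^{p(x)}\leq 3^{p(x)-1}(a_{1}^{p(x)}+a_{2}^{p(x)}+a_{3}^{p(x)})\leq 3^{p^{+}-1}\sum_{i}a_{i}^{p(x)}$, and then apply Jensen's inequality $\big(\frac{1}{|Q|}\int_{Q}h\big)^{p(x)}\leq \frac{1}{|Q|}\int_{Q}h^{p(x)}$ to the first and third terms with $h=g^{p(z)/p(x)}$ and $h=(e+|y+\tau|)^{-m/p(x)}$ respectively. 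This yields $3^{p^{+}-1}$ times the sum of $\frac{1}{|Q|}\int_{Q}|f(y+\tau)|^{p(y+\tau)}\,dy$, $(e+|x|)^{-m}$, and $\frac{1}{|Q|}\int_{Q}(e+|y+\tau|)^{-m}\,dy$. Since $|Q|>1$, the factor $1/|Q|$ may be dropped from the last integral and the coefficient of the first term enlarged from $3^{p^{+}-1}$ to $3^{p^{+}}$, giving exactly the claimed inequality.

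It remains to prove the pointwise estimate, where the real work lies. If $p(z)\leq p(x)$ then $t^{p(z)/p(x)}\geq t\geq c_{4}t$ because $t\leq 1$ and $c_{4}<1$, so the first term alone suffices. The hard case is $p(z)>p(x)$, which I would handle by splitting on the size of $t$. If $t\leq (e+|x|)^{-m/p(x)}$ or $t\leq (e+|z|)^{-m/p(x)}$, then $c_{4}t\leq t$ is dominated by the corresponding error term. Otherwise $t$ is bounded below, so that
\[
-\ln t<\tfrac{m}{p(x)}\log (e+|x|)\qquad \text{and}\qquad -\ln t<\tfrac{m}{p(x)}\log (e+|z|)
\]
simultaneously, and it then suffices to verify $c_{4}\leq t^{\,p(z)/p(x)-1}$, i.e. $\big(\tfrac{p(z)}{p(x)}-1\big)(-\ln t)\leq 4mc_{3}(p)$. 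Writing $\tfrac{p(z)}{p(x)}-1=p(z)\big(\tfrac{1}{p(x)}-\tfrac{1}{p(z)}\big)$ and estimating $\tfrac{1}{p(x)}-\tfrac{1}{p(z)}$ through the log-H\"{o}lder continuity and decay of $1/p$ — the local condition (\ref{gag22}) controlling the contribution when $x$ and $z=y+\tau$ are close, and the decay condition (\ref{gag23}) controlling it through $p_{\infty }$ when they are far apart — one pairs each resulting $1/\log (e+|\cdot |)$ factor with the matching lower bound on $-\ln t$ to produce the constant $4mc_{3}(p)$.

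The main obstacle is precisely this last bookkeeping. The naive pairing leaves a residual factor $p(z)/p(x)$, and closing the estimate with the stated constant $\exp (-4mc_{3}(p))$ requires arranging the local/global split and the threshold on $t$ so that the variation of $p$ over the relevant distance is genuinely small wherever the ratio $p(z)/p(x)$ is large — which is exactly what (\ref{gag22})--(\ref{gag23}) guarantee, since a large ratio forces $x$ and $z$ into a regime where one of the two log-H\"{o}lder bounds is strong. This is the step I would treat most carefully, following the argument of Theorem 2 of \cite{drh20}; the remaining manipulations (Jensen, convexity, and discarding $1/|Q|$ via $|Q|>1$) are routine.
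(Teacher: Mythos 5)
Your overall reduction scheme is sound: the pointwise key estimate, averaged over $y\in Q$, followed by the convexity bound $(A+B+C)^{p(x)}\le 3^{p(x)-1}\bigl(A^{p(x)}+B^{p(x)}+C^{p(x)}\bigr)$, Jensen's inequality applied to the first and third terms, and the hypothesis $|Q|>1$ to discard the factor $1/|Q|$, is exactly the Diening--Drihem route, and those steps are carried out correctly. Bear in mind, however, that the paper itself offers no proof of this proposition at all (it states that the proof "is the same with Theorem 2 of \cite{drh20}''), so the only mathematical substance a blind proof must supply is the pointwise estimate with the specific constant $c_4=\exp(-4mc_3(p))$ --- and that is precisely the step you leave open. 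In the hard case $p(z)>p(x)$ with $t$ above both thresholds you correctly diagnose that the naive pairing yields $\bigl(\tfrac{p(z)}{p(x)}-1\bigr)(-\ln t)\le 2mc_2(p)\,\tfrac{p(z)}{p(x)}$ with an uncontrolled factor $\tfrac{p(z)}{p(x)}$, but you then defer the repair to ``the argument of Theorem 2 of \cite{drh20}''. Since this is the entire content of the proposition, the proposal as written has a genuine gap.

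The gap can in fact be closed, with exactly the stated constant, by splitting the hard case once more at $p(z)=2p(x)$; and doing so shows your closing heuristic is not quite the right mechanism. Put $a=1/p(x)$, $b=1/p(z)$, $M=\min\{\log(e+|x|),\log(e+|z|)\}$. Since $P^{Log}$ imposes (\ref{gag22})--(\ref{gag23}) on $1/p$, the decay condition alone gives, via the triangle inequality through the value of $1/p$ at infinity,
\begin{equation*}
a-b\le \frac{c_{2}(p)}{\log(e+|x|)}+\frac{c_{2}(p)}{\log(e+|z|)}\le \frac{2c_{2}(p)}{M}
\end{equation*}
for \emph{all} $x,z$; the local condition (\ref{gag22}) is never needed, which is consistent with the restriction $|Q|>1$. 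If $p(z)\le 2p(x)$, i.e. $b\ge a/2$, then the threshold assumptions $-\ln t<ma\log(e+|x|)$ and $-\ln t<ma\log(e+|z|)$ give $-\ln t<maM$, hence
\begin{equation*}
\Bigl(\tfrac{p(z)}{p(x)}-1\Bigr)(-\ln t)=\frac{a-b}{b}\,(-\ln t)\le \frac{2(a-b)}{a}\,maM=2m(a-b)M\le 4mc_{2}(p)\le 4mc_{3}(p),
\end{equation*}
which is what you need. If instead $p(z)>2p(x)$, then $a-b>a/2$, so $a/2\le 2c_{2}(p)/M$, i.e. $maM\le 4mc_{2}(p)$, and therefore, for every $t\in[0,1]$,
\begin{equation*}
\max\bigl\{(e+|x|)^{-m/p(x)},\,(e+|z|)^{-m/p(x)}\bigr\}=e^{-maM}\ge e^{-4mc_{2}(p)}\ge c_{4}\ge c_{4}t,
\end{equation*}
so an error term dominates outright, with no reference to $t^{p(z)/p(x)}$ (which can be arbitrarily small in this regime). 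Thus a large ratio $p(z)/p(x)$ does not make the first term usable, as your sketch suggests; rather, (\ref{gag23}) forces one of the error terms to be bounded below by $e^{-4mc_{2}(p)}$. A final caveat: your reduction to $t\in[0,1]$ rests on reading the hypothesis as $\|f\|_\infty\le 1$; Drihem's theorem --- and the paper's own use of the proposition in the proof of Theorem \ref{stekRR}, where $f\in L_{p(\cdot)}$ need not be bounded --- concerns the norm of the sum space $L_{p(\cdot)}+L_\infty$, and for that version one must additionally split $f=f\chi_{\{|f|\le 1\}}+f\chi_{\{|f|>1\}}$ and control the unbounded piece using $|f|\le|f|^{p(\cdot)}$ on $\{|f|>1\}$ together with the modular bound $I_{p(\cdot)}\bigl(f\chi_{\{|f|>1\}}\bigr)\le 1$.
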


\begin{theorem}
\label{stekRR} Suppose that $p\in P^{Log}\left( \boldsymbol{R}\right) $.
Then, the family of operators $\{\mathcal{U}_{\tau }f\}_{\tau \in 
\boldsymbol{R}}$, defined by 
\begin{equation*}
\mathcal{U}_{\tau }f(x):=\mathsf{S}_{1}f\left( x+\tau \right)
=\int\nolimits_{-1/2}^{+1/2}f\left( x+\tau +t\right) dt,\quad x\in 
\boldsymbol{R},\quad \tau \in \boldsymbol{R}
\end{equation*}%
is uniformly bounded (in $\tau $) in $L_{p\left( \cdot \right) }$, namely, 
\begin{equation*}
\left\Vert \mathcal{U}_{\tau }f\right\Vert _{p\left( \cdot \right) }\leq
c_{5}\left( p^{+},c_{3}\left( p\right) \right) \left\Vert f\right\Vert
_{p\left( \cdot \right) },
\end{equation*}%
holds with $c_{5}\left( p^{+},c_{3}\left( p\right) \right)
:=2^{p^{+}+1}3^{p^{+}}\left( 1+2\cdot 3^{p^{+}}\left[ \sum\nolimits_{k=2}^{%
\infty }2^{-k}+2\right] \right) \exp \left( 8c_{3}\left( p\right) \right) $.
\end{theorem}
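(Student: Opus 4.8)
The plan is to reduce the norm bound to a modular estimate and feed it into Proposition \ref{dri} with the specific choice $m=2$. This choice is dictated by the target constant: it makes $c_{4}\left( 2,c_{3}\left( p\right) \right) =\exp \left( -8c_{3}\left( p\right) \right) $, which is the origin of the factor $\exp \left( 8c_{3}\left( p\right) \right) $ in $c_{5}$, and it renders the decay weights $\left( e+\left\vert x\right\vert \right) ^{-m}$ integrable over $\boldsymbol{R}$. By positive homogeneity of both $\mathcal{U}_{\tau }$ and $\left\Vert \cdot \right\Vert _{p\left( \cdot \right) }$, together with a routine truncation and monotone-convergence argument, it suffices to prove a modular bound for $0\leq f$ normalized so that Proposition \ref{dri} applies, that is $\left\Vert f\right\Vert _{p\left( \cdot \right) }+\left\Vert f\right\Vert _{\infty }\leq 1$; in particular $I_{p\left( \cdot \right) }\left( f\right) \leq 1$ and $\left\vert f\right\vert \leq 1$ a.e.

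The heart of the argument is to reconcile the \emph{sliding} averaging window of length one with the requirement $\left\vert Q\right\vert >1$ in Proposition \ref{dri}. After the substitution $u=x+t$ one has $\mathcal{U}_{\tau }f\left( x\right) =\int_{x-1/2}^{x+1/2}f\left( u+\tau \right) du$. I would partition $\boldsymbol{R}=\bigcup_{n\in \Z }\left[ n,n+1\right) $ and, for $x\in \left[ n,n+1\right) $, enclose the window $\left[ x-1/2,x+1/2\right] $ in the fixed interval $Q_{n}:=\left[ n-1/2,n+3/2\right] $, which satisfies $\left\vert Q_{n}\right\vert =2>1$, so that Proposition \ref{dri} is available on $Q_{n}$. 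Writing $A_{n}:=\frac{1}{\left\vert Q_{n}\right\vert }\int_{Q_{n}}\left\vert f\left( u+\tau \right) \right\vert du$, the monotonicity $\mathcal{U}_{\tau }f\left( x\right) \leq \left\vert Q_{n}\right\vert A_{n}=2A_{n}$ shows that with $C_{0}:=2/c_{4}\left( 2,c_{3}\left( p\right) \right) $ the quotient $\mathcal{U}_{\tau }f\left( x\right) /C_{0}$ is bounded by the base $c_{4}A_{n}$ appearing on the left of Proposition \ref{dri}. Raising to the power $p\left( x\right) $ and invoking the proposition pointwise on $Q_{n}$ then estimates $\left( \mathcal{U}_{\tau }f\left( x\right) /C_{0}\right) ^{p\left( x\right) }$ by the average of $\left\vert f\left( y+\tau \right) \right\vert ^{p\left( y+\tau \right) }$ over $Q_{n}$ plus the two decay terms.

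Next I would integrate this pointwise inequality over $x\in \left[ n,n+1\right) $ and sum over $n\in \Z $. Since the enclosing intervals $Q_{n}$ cover each point of $\boldsymbol{R}$ exactly twice, the main term sums, after the change of variable $u=y+\tau $, into a constant multiple of $\int_{\boldsymbol{R}}\left\vert f\left( u\right) \right\vert ^{p\left( u\right) }du=I_{p\left( \cdot \right) }\left( f\right) \leq 1$; this is precisely where the non-translation-invariance disappears, because $\left\vert f\right\vert $ and the exponent are shifted by the same $\tau $. The two decay contributions sum, by the same overlap count and shift, to absolute multiples of $\int_{\boldsymbol{R}}\left( e+\left\vert x\right\vert \right) ^{-2}dx<\infty $, again independently of $\tau $. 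Collecting these gives a modular estimate $I_{p\left( \cdot \right) }\left( \mathcal{U}_{\tau }f/C_{0}\right) \leq K$ with $K=K\left( p^{+}\right) $ an explicit constant of the form $3^{p^{+}}\bigl( 1+\text{(summable decay)}\bigr) $, which accounts for the bracketed polynomial-in-$3^{p^{+}}$ factor in $c_{5}$.

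Finally I would pass from the modular to the Luxemburg norm. Using $p^{-}\geq 1$ and the standard relation $\left\Vert g\right\Vert _{p\left( \cdot \right) }\leq \max \bigl( 1,I_{p\left( \cdot \right) }\left( g\right) ^{1/p^{-}}\bigr) $, the bound $I_{p\left( \cdot \right) }\left( \mathcal{U}_{\tau }f/C_{0}\right) \leq K$ yields $\left\Vert \mathcal{U}_{\tau }f\right\Vert _{p\left( \cdot \right) }\leq C_{0}K^{1/p^{-}}$; substituting $C_{0}=2\exp \left( 8c_{3}\left( p\right) \right) $ and absorbing the powers of $3^{p^{+}}$ and $2$ reproduces a bound of the asserted form $c_{5}=2^{p^{+}+1}3^{p^{+}}\bigl( 1+2\cdot 3^{p^{+}}[\cdots ]\bigr) \exp \left( 8c_{3}\left( p\right) \right) $, while uniformity in $\tau $ is automatic since every $\tau $ was eliminated by a change of variables. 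I expect the main obstacle to be exactly the covering device of the second step—matching the length-one Steklov window to the hypothesis $\left\vert Q\right\vert >1$ while keeping the overlap, and hence the constants, under control; a secondary technical point is the removal of the auxiliary $L_{\infty }$-normalization required by Proposition \ref{dri}, which I would handle by splitting $f=\min \left( \left\vert f\right\vert ,1\right) +\left( \left\vert f\right\vert -1\right) _{+}$ and treating the unbounded part directly through the modular, where $\left\vert f\right\vert \leq \left\vert f\right\vert ^{p\left( \cdot \right) }$.
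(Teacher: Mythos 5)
Your proposal is correct and follows essentially the same route as the paper's own proof: the same application of Proposition \ref{dri} with $m=2$ (hence the factor $\exp\left(8c_{3}\left(p\right)\right)$), the same device of enclosing the length-one Steklov window, over a partition of $\boldsymbol{R}$ into unit intervals, in doubled intervals of measure $2$ with overlap count $2$, the same change of variables $u=y+\tau$ that removes $\tau$ from the main and decay terms, and the same passage from a modular bound to the Luxemburg norm. The only differences are cosmetic bookkeeping: the paper simply normalizes $\left\Vert f\right\Vert _{p\left(\cdot\right)}\leq 1/2$ and takes the trivial decomposition in $L_{p\left(\cdot\right)}+L_{\infty }$ (making your splitting $f=\min \left( \left\vert f\right\vert ,1\right) +\left( \left\vert f\right\vert -1\right) _{+}$ unnecessary, though harmless), and it arranges the constants so that the modular of $\frac{c_{4}}{c_{6}}\mathcal{U}_{\tau }f$ is exactly $\leq 1$ rather than invoking the $K^{1/p^{-}}$ relation.
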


\begin{proof}[\textbf{Proof of Theorem \protect\ref{stekRR}}]
Let us consider $f\in L_{p\left( \cdot \right) }$ with $\left\Vert
f\right\Vert _{p\left( \cdot \right) }\leq 1/2$. Suppose that $Q:=\left\{
U\subset \boldsymbol{R}:U\text{ open interval and }\left\vert U\right\vert
=1\right\} $ be a locally $1$-finite family of partition of $\boldsymbol{R}$%
. Choose $m=2>1$ ( constant $c_{6}\left( p^{+}\right) $ below becomes a
finite number)%
\begin{equation*}
c_{6}\left( p^{+}\right) =2^{p^{+}}3^{p^{+}}\left( 1+2\cdot 3^{p^{+}}\left[
\sum\nolimits_{k=2}^{\infty }2^{-k}+2\right] \right) <\infty .
\end{equation*}%
We can select $c_{4}\left( 2,c_{3}\left( p\right) \right) =\exp \left(
-8c_{3}\left( p\right) \right) \in \left( 0,1\right) $ as in Proposition \ref%
{dri}. Then using Corollary 2.2.2 of \cite[p.20]{HH} we obtain%
\begin{equation*}
\rho _{p\left( \cdot \right) }\left( \frac{c_{4}\left( 2,c_{3}\left(
p\right) \right) }{c_{6}\left( p^{+}\right) }\mathcal{U}_{\tau }f\right) 
\text{=}\frac{1}{c_{6}\left( p^{+}\right) }\int\limits_{\boldsymbol{R}%
}\left\vert c_{4}\left( 2,c_{3}\left( p\right) \right)
\int\nolimits_{-1/2}^{+1/2}f\left( x\text{+}\tau \text{+}t\right)
dt\right\vert ^{p\left( x\right) }dx
\end{equation*}%
\begin{equation*}
\leq \frac{1}{c_{6}\left( p^{+}\right) }\sum_{U\in
Q}\int\limits_{U}\left\vert c_{4}\left( 2,c_{3}\left( p\right) \right)
\int\nolimits_{-1/2}^{+1/2}f\left( x+\tau +t\right) dt\right\vert ^{p\left(
x\right) }dx
\end{equation*}%
\begin{equation*}
\leq \frac{2^{p^{+}}}{c_{6}\left( p^{+}\right) }\sum_{U\in
Q}\int\limits_{U}\left\vert \frac{c_{4}\left( 2,c_{3}\left( p\right) \right) 
}{\left\vert 2U\right\vert }\int\limits_{2U}\chi _{2U}\left( y\right)
f\left( y+\tau \right) dy\right\vert ^{p\left( x\right) }dx
\end{equation*}%
\begin{equation*}
\leq \frac{2^{p^{+}}}{c_{6}\left( p^{+}\right) }\sum_{U\in Q}\int\limits_{U}%
\left[ \frac{3^{p^{+}}\chi _{2U}\left( y\right) }{\left\vert 2U\right\vert }%
\int\limits_{2U}\left\vert f\left( y+\tau \right) \right\vert ^{p\left(
y+\tau \right) }dy+\right.
\end{equation*}%
\begin{equation*}
+\left. \frac{3^{p^{+}-1}}{\left( e+\left\vert x\right\vert \right) ^{2}}+%
\frac{\chi _{2U}\left( y\right) }{\left\vert 2U\right\vert }\int\limits_{2U}%
\frac{3^{p^{+}-1}dy}{\left( e+\left\vert y+\tau \right\vert \right) ^{2}}%
\right] dx
\end{equation*}%
\begin{equation*}
\leq \frac{2^{p^{+}-1}3^{p^{+}}}{c_{6}\left( p^{+}\right) }\sum_{U\in
Q}\int\limits_{U}\left[ \chi _{2U}\left( y\right) \int\limits_{2U+\tau
}\left\vert f\left( s\right) \right\vert ^{p\left( s\right) }ds\text{+}\frac{%
3^{p^{+}-1}2}{\left( e+\left\vert x\right\vert \right) ^{2}}\text{+}%
\int\limits_{2U+\tau }\frac{3^{p^{+}-1}ds}{\left( e+\left\vert s\right\vert
\right) ^{2}}\right] dx
\end{equation*}%
\begin{equation*}
\leq \frac{2^{p^{+}-1}3^{p^{+}}}{c_{6}\left( p^{+}\right) }\left( \sum_{U\in
Q}\chi _{2U}\right) \left( 1+3^{p^{+}}\int\limits_{\boldsymbol{R}}\frac{ds}{%
\left( e+\left\vert s\right\vert \right) ^{2}}\right)
\end{equation*}%
\begin{eqnarray*}
&=&\frac{2^{p^{+}}3^{p^{+}}}{c_{6}\left( p^{+}\right) }\left(
1+3^{p^{+}}\int\limits_{\boldsymbol{R}}\frac{ds}{\left( e+\left\vert
s\right\vert \right) ^{2}}\right) \\
&\leq &\frac{2^{p^{+}}3^{p^{+}}}{c_{6}\left( p^{+}\right) }\left( 1+2\cdot
3^{p^{+}}\left[ \sum\nolimits_{k=2}^{\infty }\frac{1}{2^{k}}+2\right]
\right) =1
\end{eqnarray*}%
and hence%
\begin{equation*}
\left\Vert \mathcal{U}_{\tau }f\right\Vert _{p\left( \cdot \right) }\leq
2^{-1}c_{5}\left( p^{+},c_{3}\left( p\right) \right) .
\end{equation*}%
General case $f\in L_{p\left( \cdot \right) }$ can be obtained easily by
rescaling:%
\begin{equation*}
\left\Vert \mathcal{U}_{\tau }f\right\Vert _{p\left( \cdot \right) }\leq
c_{5}\left( p^{+},c_{3}\left( p\right) \right) \left\Vert f\right\Vert
_{p(\cdot )}.
\end{equation*}
\end{proof}

\begin{theorem}
\label{Aver} (\cite[Theorem 4.4.8]{DHHR11}) Suppose that $p\in P^{Log}\left( 
\boldsymbol{R}\right) $ and $f\in L_{p\left( \cdot \right) }$. If $Q$ is
locally $1$-finite family of open bounded subintervals of $\boldsymbol{R}$
having Lebesgue measure $1$, then, the averaging operator $T_{Q}$ is
uniformly bounded in $L_{p\left( \cdot \right) }$, namely,%
\begin{equation*}
\left\Vert T_{Q}f\right\Vert _{p\left( \cdot \right) }\leq c_{7}\left(
c_{\log }\left( p\right) \right) \left\Vert f\right\Vert _{p\left( \cdot
\right) }
\end{equation*}%
holds with $c_{7}\left( c_{3}\left( p\right) \right) :=2\exp \left(
8c_{3}\left( p\right) \right) .$
\end{theorem}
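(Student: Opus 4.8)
The plan is to follow the argument just used for Theorem \ref{stekRR} almost verbatim, the only simplification being that the averaging operator $T_{Q}$ carries no translation, so one may take $\tau =0$ throughout. As in that proof, it suffices to bound the modular $I_{p(\cdot )}(\lambda T_{Q}f)$ for a suitable fixed $\lambda >0$ and all $f$ with $\Vert f\Vert _{p(\cdot )}$ suitably normalized (so that $I_{p(\cdot )}(f)\leq 1$), the general case following by rescaling. First I would use that $Q$ is locally $1$-finite: the open intervals $U\in Q$ are then pairwise disjoint, so for almost every $x$ the sum defining $T_{Q}$ has exactly one nonzero term, namely $T_{Q}f(x)=A_{U}f=\frac{1}{|U|}\int_{U}|f(y)|\,dy$ for the unique $U\ni x$. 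Hence $I_{p(\cdot )}(\lambda T_{Q}f)=\sum_{U\in Q}\int_{U}|\lambda A_{U}f|^{p(x)}\,dx$.

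Next comes the one genuine technical point: Proposition \ref{dri} requires the averaging interval to have measure strictly greater than $1$, whereas here $|U|=1$. I would resolve this exactly as in Theorem \ref{stekRR}, by passing to the doubled interval $2U$ (same centre, length $2$). Since $|U|=1$ one has the elementary bound $A_{U}f=\int_{U}|f|\leq \int_{2U}|f|=2\,A_{2U}f$ with $|2U|=2>1$, so Proposition \ref{dri} applies to $2U$ with $\tau =0$ and $m=2$. For $x\in U$ this yields, after the scaling is absorbed into $c_{4}(2,c_{3}(p))$, a pointwise bound of the integrand by a sum of three terms: a main term $\frac{3^{p^{+}}}{|2U|}\int_{2U}|f(y)|^{p(y)}\,dy$, a decay term $3^{p^{+}-1}(e+|x|)^{-2}$, and a tail term $3^{p^{+}-1}\int_{2U}(e+|y|)^{-2}\,dy$.

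It then remains to integrate over $U$ and sum over $U\in Q$. The main terms reassemble into a constant multiple of $I_{p(\cdot )}(f)\leq 1$ once one observes that the doubled family $\{2U:U\in Q\}$ is locally $2$-finite: disjoint unit intervals have centres separated by at least $1$, and an open interval of length $2$ contains at most two such centres, so $\sum_{U\in Q}\chi _{2U}\leq 2$. The decay term integrates to a finite constant because $m=2>1$, and the tail terms, again by local $2$-finiteness, sum to a constant multiple of $\int_{\boldsymbol{R}}(e+|s|)^{-2}\,ds<\infty $. Collecting these, the modular is bounded by a finite constant; choosing $\lambda $ small enough (this is where the factor $c_{4}(2,c_{3}(p))=\exp (-8c_{3}(p))$ enters, producing the $\exp (8c_{3}(p))$ in $c_{7}$) makes $I_{p(\cdot )}(\lambda T_{Q}f)\leq 1$, whence $\Vert T_{Q}f\Vert _{p(\cdot )}\leq \lambda ^{-1}$.

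The main obstacle is therefore not deep but organizational: reconciling the measure-$1$ intervals of $Q$ with the measure-$>1$ hypothesis of Proposition \ref{dri} through doubling, and then controlling the resulting overlaps via the local $2$-finiteness of $\{2U\}$ together with the convergence of the log-H\"{o}lder decay tail. Everything else is the modular computation already carried out for Theorem \ref{stekRR}; note that this route reproduces a bound of the same exponential type $\exp (8c_{3}(p))$ as the sharp constant $c_{7}(c_{3}(p))=2\exp (8c_{3}(p))$ quoted from \cite{DHHR11}.
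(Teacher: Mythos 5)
The first thing to note is that the paper does not prove this statement at all: Theorem \ref{Aver} is imported verbatim from \cite[Theorem 4.4.8]{DHHR11}, so there is no internal proof to compare against; the closest in-paper relative is the proof of Theorem \ref{stekRR}, which your argument mirrors. As a proof of \emph{uniform boundedness} your outline is sound: by local $1$-finiteness the intervals of $Q$ are pairwise disjoint, so $T_{Q}f=A_{U}f$ on each $U$ and vanishes off $\bigcup_{U\in Q}U$, and the modular splits as you say; the doubling step $A_{U}f\leq 2A_{2U}f$ legitimately brings you into the scope of Proposition \ref{dri} (with $\left\vert 2U\right\vert =2>1$, $\tau =0$, $m=2$); and your observation that $\left\{ 2U:U\in Q\right\} $ is locally $2$-finite (three centres pairwise at distance $\geq 1$ cannot all lie in an open interval of length $2$) is exactly what is needed to resum the main terms against $I_{p(\cdot )}\left( f\right) \leq 1$ and the tail terms against $\int_{\boldsymbol{R}}\left( e+\left\vert s\right\vert \right) ^{-2}ds<\infty $. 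This is the same mechanism as the paper's proof of Theorem \ref{stekRR}.

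The genuine defect is your closing claim about the constant. The theorem asserts the specific bound $c_{7}\left( c_{3}\left( p\right) \right) =2\exp \left( 8c_{3}\left( p\right) \right) $, which depends \emph{only} on the log-H\"{o}lder constant. Your route cannot produce this: Proposition \ref{dri} injects the factors $3^{p^{+}}$, and the normalization and rescaling inject further $2^{p^{+}}$-type factors, so what you actually prove is $\left\Vert T_{Q}f\right\Vert _{p\left( \cdot \right) }\leq C\left( p^{+},c_{3}\left( p\right) \right) \left\Vert f\right\Vert _{p\left( \cdot \right) }$ with $C$ growing exponentially in $p^{+}$ --- compare $c_{5}\left( p^{+},c_{3}\left( p\right) \right) $ in Theorem \ref{stekRR}, which has exactly this structure for exactly this reason. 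Hence the sentence claiming that your route "reproduces a bound of the same exponential type as the sharp constant" is not correct: the $p^{+}$-free constant in the statement comes from the different argument in \cite{DHHR11}, whose key estimate for averaging operators exploits convexity of $t\mapsto t^{p\left( x\right) }$ without the $3^{p^{+}}$ loss (and indeed works even when $p^{+}=\infty $). For the purposes of this paper the weaker constant would be harmless, since $c_{7}$ only ever appears multiplied by $c_{5}\left( p^{+},c_{3}\left( p\right) \right) $; but as a proof of the theorem \emph{as stated}, with its explicit constant, your argument establishes a strictly weaker inequality.
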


Let $C(\boldsymbol{R})$ be the class of continuous functions defined on $%
\boldsymbol{R}$. For $r\in \mathrm{N}$, we define $C^{r}$ consisting of
every member $f\in C(\boldsymbol{R})$ such that the derivative $f^{\left(
k\right) }$ exists and is continuous\ on $\boldsymbol{R}$ for $k=1,...,r$.
We set $C^{\infty }:=\left\{ f\in C^{r}\text{ for any }r\in \mathrm{N}%
\right\} $. We denote by $C_{c}\left( \boldsymbol{R}\right) $, the
collection of real valued continuous functions on $\boldsymbol{R}$ and
support of $f$ is compact set in $\boldsymbol{R}.$ We define $%
C_{c}^{r}:=C^{r}\cap C_{c}\left( \boldsymbol{R}\right) $ for $r\in \mathrm{N}
$ and $C_{c}^{\infty }:=C^{\infty }\cap C_{c}\left( \boldsymbol{R}\right) $.
Let $L_{p}\left( \boldsymbol{R}\right) $, $1\leq p\leq \infty $ be the
classical Lebesgue space of functions on $\boldsymbol{R}$.

\begin{theorem}
\label{du} \cite[Corollary 4.6.6]{DHHR11} Let $p\in P^{Log}\left( 
\boldsymbol{R}\right) $ and $f\in L_{p\left( \cdot \right) }$. Then%
\begin{equation}
\frac{\left\Vert f\right\Vert _{p\left( \cdot \right) }}{12c_{7}\left(
c_{3}\left( p\right) \right) }\leq \sup_{g\in L_{p^{\prime }\left( \cdot
\right) }\cap C_{c}^{\infty }:\left\Vert g\right\Vert _{p^{\prime }\left(
\cdot \right) }\leq 1}\int\nolimits_{\boldsymbol{R}}\left\vert f\left(
x\right) g\left( x\right) \right\vert dx\leq 2\left\Vert f\right\Vert
_{p\left( \cdot \right) }  \label{Cdual}
\end{equation}
\end{theorem}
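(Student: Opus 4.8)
The plan is to split the two-sided estimate into its two halves and treat each by a standard duality mechanism, tracking the quantitative constants. The right-hand inequality $\int_{\boldsymbol{R}}|fg|\,dx\le 2\Vert f\Vert_{p(\cdot)}$ for $\Vert g\Vert_{p'(\cdot)}\le 1$ is simply H\"older's inequality in variable exponent spaces: one has $\int_{\boldsymbol{R}}|fg|\,dx\le\bigl(1/p^{-}+1/(p')^{-}\bigr)\Vert f\Vert_{p(\cdot)}\Vert g\Vert_{p'(\cdot)}\le 2\Vert f\Vert_{p(\cdot)}\Vert g\Vert_{p'(\cdot)}$, and passing to the supremum over the unit ball of $L_{p'(\cdot)}$ intersected with $C_c^{\infty}$ only decreases the left side, so the upper bound is immediate.

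For the left-hand inequality I would proceed in two stages. First I recall the unrestricted norm conjugate formula $\Vert f\Vert_{p(\cdot)}\le 2\,\sup\{\int_{\boldsymbol{R}}|fg|\,dx:\ g\in L_{p'(\cdot)},\ \Vert g\Vert_{p'(\cdot)}\le 1\}$, valid for every $p\in\tilde P(\boldsymbol{R})$ with no regularity needed. The actual content of the theorem is then to replace the generic test function $g$ by a smooth, compactly supported one at the cost of the single extra factor $6\,c_{7}(c_{3}(p))$, so that the factor $12\,c_{7}$ appears after combining with the $2$ above. Here the hypotheses $p\in P^{Log}$ and Theorem \ref{Aver} enter. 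Given a near-extremal $g$ with $\Vert g\Vert_{p'(\cdot)}\le 1$ (we may assume $f,g\ge 0$, since only $|fg|$ occurs and $\Vert |g|\Vert_{p'(\cdot)}=\Vert g\Vert_{p'(\cdot)}$), I would first truncate it to $g_{R,M}:=\min\{g,M\}\,\chi_{[-R,R]}$, which does not increase the $L_{p'(\cdot)}$ norm and, together with the upper estimate $\int f(g-g_{R,M})\le 2\Vert f\Vert_{p(\cdot)}\Vert g-g_{R,M}\Vert_{p'(\cdot)}$ and dominated convergence, captures all but an arbitrarily small part of $\int fg$.

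I would then mollify, setting $h_{\varepsilon}:=g_{R,M}\ast\varphi_{\varepsilon}\in C_c^{\infty}$ for a standard mollifier $\varphi_{\varepsilon}$; since $g_{R,M}$ is bounded with compact support this indeed lands in $C_c^{\infty}$. The crucial point is that $h_{\varepsilon}$ is pointwise dominated by an average of $g_{R,M}$ over an interval of length comparable to $\varepsilon$, hence by an operator of the type treated in Theorem \ref{Aver} after a harmless rescaling of the partition; applying that theorem with the exponent $p'$, whose log-H\"older data coincide with those of $p$ so that the constant is again $c_{7}(c_{3}(p))$, yields $\Vert h_{\varepsilon}\Vert_{p'(\cdot)}\le c_{7}(c_{3}(p))\,\Vert g_{R,M}\Vert_{p'(\cdot)}\le c_{7}(c_{3}(p))$. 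Meanwhile the self-adjointness of convolution gives $\int f\,h_{\varepsilon}=\int (f\ast\tilde\varphi_{\varepsilon})\,g_{R,M}\to\int f\,g_{R,M}$ as $\varepsilon\to 0$, because $g_{R,M}$ is bounded with compact support and $f\ast\tilde\varphi_{\varepsilon}\to f$ in $L^{1}_{loc}$.

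The main obstacle is precisely this reduction to $C_c^{\infty}$: I must produce a genuine element of $C_c^{\infty}$ whose $L_{p'(\cdot)}$ norm is bounded by an absolute multiple of $1$ — not merely an $L_{p'(\cdot)}$-limit — while losing only a controlled constant in the pairing. Normalizing $h_{\varepsilon}$ by its bounded norm and absorbing the truncation and mollification errors into the constant $6$ gives $\int f\,\bigl(h_{\varepsilon}/\Vert h_{\varepsilon}\Vert_{p'(\cdot)}\bigr)\ge \frac{1}{6\,c_{7}(c_{3}(p))}\int fg$ for suitable $R,M,\varepsilon$, and taking the supremum over $g$ produces the left inequality. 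Everything outside the averaging-operator step is soft; the quantitative strength of the bound rests entirely on the uniform estimate furnished by Theorem \ref{Aver}.
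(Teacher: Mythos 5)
You are proving a statement that the paper itself never proves: Theorem \ref{du} is imported verbatim, with its constant, from \cite[Corollary 4.6.6]{DHHR11}, so there is no in-paper argument to compare against. Your overall strategy --- H\"{o}lder's inequality for the upper bound; the unrestricted norm conjugate formula with constant $2$, then truncation and mollification to produce a genuine $C_c^{\infty}$ test function, with the mollifier dominated pointwise by averaging operators so that its $L_{p'(\cdot)}$ norm stays bounded --- is the right one, and it is essentially how the cited source itself argues. Your constant bookkeeping ($2$ from the conjugate formula times a factor of order $6c_{7}(c_{3}(p))$ from the mollification step) is also consistent with the stated bound $12c_{7}(c_{3}(p))$.

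There is, however, one step that is genuinely unjustified as written: the claim that the averaging bound of Theorem \ref{Aver} applies to intervals of length $\sim\varepsilon$ ``after a harmless rescaling of the partition.'' Theorem \ref{Aver}, as stated in this paper, covers only locally $1$-finite families of intervals of Lebesgue measure exactly $1$, and the proof technique behind it (Proposition \ref{dri}) explicitly requires $|Q|\geq 1$; moreover, variable exponent norms are not dilation invariant, and rescaling destroys the decay half of the log-H\"{o}lder condition: for $q_{\varepsilon}(x):=p'(\varepsilon x)$ the best constant in the decay condition (\ref{gag23}) for $1/q_{\varepsilon}$ grows like $\log(1/\varepsilon)$, so the constant $c_{7}$ attached to the rescaled exponent blows up as $\varepsilon\rightarrow 0$. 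What you actually need is the original form of the averaging theorem, \cite[Theorem 4.4.8]{DHHR11}, which is uniform over balls of \emph{all} radii (the small-ball case is proved there directly from the local log-H\"{o}lder condition (\ref{gag22}), not by rescaling the unit-scale case). Citing that version also repairs a second, formal defect in your argument: when $p^{-}=1$ the conjugate exponent satisfies $(p')^{+}=\infty$, hence $p'\notin\tilde{P}\left(\boldsymbol{R}\right)$ and the paper's Theorem \ref{Aver} does not formally apply to $p'$ at all, whereas the version in \cite{DHHR11} allows exponents with values in $[1,\infty]$ and, as you correctly note, $1/p'$ has the same log-H\"{o}lder constants as $1/p$. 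With Theorem \ref{Aver} replaced by \cite[Theorem 4.4.8]{DHHR11} at these two points, your proof is correct.
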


\begin{definition}
Let $p\in P^{Log}\left( \boldsymbol{R}\right) $. For an $f\in L_{p\left(
\cdot \right) }$, we define%
\begin{equation}
F_{f}\left( u\right) \text{:}\text{=}\int\nolimits_{\boldsymbol{R}}\left( 
\mathsf{S}_{1\text{ }}f\right) \left( x+u\right) \left\vert G\left( x\right)
\right\vert dx,\quad u\in \boldsymbol{R}\text{,}  \label{efef}
\end{equation}%
where $G\in L_{p^{\prime }\left( \cdot \right) }\cap C_{c}^{\infty }$ and $%
\left\Vert G\right\Vert _{p^{\prime }\left( \cdot \right) }\leq 1.$
\end{definition}

Let $W_{p\left( \cdot \right) }^{r}$, $r\in \mathrm{N}$, be the class of
functions $f\in L_{p\left( \cdot \right) }$ such that derivatives $f^{\left(
k\right) }$ exist for $k=1,...,r-1$, $f^{\left( r-1\right) }$ absolutely
continuous and $f^{\left( r\right) }\in L_{p\left( \cdot \right) }$.

Some properties of the function $F_{f}\left( \cdot \right) $ is given in the
following theorem.

\begin{theorem}
\label{Fu} Let $p\in P^{Log}\left( \boldsymbol{R}\right) $, $0<\delta
<\infty $, and $f\in L_{p\left( \cdot \right) }$. Then,

(a) the function $F_{f}\left( \cdot \right) $ defined in (\ref{efef}) is a
bounded, uniformly continuous on $\boldsymbol{R}$,

(b) $\left( \mathsf{S}_{\delta }f\right) ^{\prime }=\mathsf{S}_{\delta
}\left( f^{\text{ }\prime }\right) $ on $\boldsymbol{R}$ for $f\in
W_{p\left( \cdot \right) }^{1}$.
\end{theorem}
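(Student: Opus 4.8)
The plan is to dispose of part (b) first, since it is a direct consequence of the absolute continuity built into the definition of $W_{p(\cdot)}^{1}$. For $f\in W_{p(\cdot)}^{1}$ the function $f$ is absolutely continuous with $f'\in L_{p(\cdot)}$, so applying the fundamental theorem of calculus to the representation $\mathsf{S}_{\delta}f(x)=\frac{1}{\delta}\int_{x-\delta/2}^{x+\delta/2}f(t)\,dt$ yields $(\mathsf{S}_{\delta}f)'(x)=\frac{1}{\delta}[f(x+\delta/2)-f(x-\delta/2)]$. On the other hand, writing $\mathsf{S}_{\delta}(f')(x)=\frac{1}{\delta}\int_{x-\delta/2}^{x+\delta/2}f'(t)\,dt$ and integrating $f'$ by absolute continuity of $f$ produces the same expression $\frac{1}{\delta}[f(x+\delta/2)-f(x-\delta/2)]$. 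Comparing the two gives the claimed identity; equivalently, one may differentiate under the integral in $\frac{1}{\delta}\int_{-\delta/2}^{\delta/2}f(x+t)\,dt$, which the same absolute continuity justifies.

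For the boundedness in part (a) I would combine Hölder's inequality in $L_{p(\cdot)}$ with Theorem \ref{stekRR}. Since $\mathsf{S}_{1}f(x+u)=\mathcal{U}_{u}f(x)$, we have $|F_{f}(u)|\leq\int_{\boldsymbol{R}}|\mathcal{U}_{u}f(x)|\,|G(x)|\,dx\leq c\,\|\mathcal{U}_{u}f\|_{p(\cdot)}\,\|G\|_{p'(\cdot)}\leq c\,c_{5}\,\|f\|_{p(\cdot)}$, where the last step uses the bound of Theorem \ref{stekRR} that is uniform in $\tau=u$, together with $\|G\|_{p'(\cdot)}\leq1$. This estimate is independent of $u$, so $F_{f}$ is bounded.

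The core of the argument is the uniform continuity, and the main obstacle is that $L_{p(\cdot)}$ is not translation invariant, so one cannot control $\|\mathcal{U}_{u+h}f-\mathcal{U}_{u}f\|_{p(\cdot)}$ by a norm-modulus of continuity of $f$. To circumvent this I would shift the $u$-dependence onto the smooth weight: the substitution $y=x+u$ gives $F_{f}(u)=\int_{\boldsymbol{R}}\mathsf{S}_{1}f(y)\,|G(y-u)|\,dy$, so that
\[
F_{f}(u+h)-F_{f}(u)=\int_{\boldsymbol{R}}\mathsf{S}_{1}f(y)\,\big[\,|G(y-u-h)|-|G(y-u)|\,\big]\,dy .
\]
Hölder's inequality then yields $|F_{f}(u+h)-F_{f}(u)|\leq c\,\|\mathsf{S}_{1}f\|_{p(\cdot)}\,\big\|\,|G(\cdot-u-h)|-|G(\cdot-u)|\,\big\|_{p'(\cdot)}$, where $\|\mathsf{S}_{1}f\|_{p(\cdot)}=\|\mathcal{U}_{0}f\|_{p(\cdot)}\leq c_{5}\|f\|_{p(\cdot)}$ is finite by Theorem \ref{stekRR}.

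It then remains to show that $\big\|\,|G(\cdot-u-h)|-|G(\cdot-u)|\,\big\|_{p'(\cdot)}\to0$ as $h\to0$ \emph{uniformly in} $u$, and this is the one place requiring care. The point is that $|G|\in C_{c}(\boldsymbol{R})$ is uniformly continuous with fixed compact support, say in $[-R,R]$; for $|h|\leq1$ the difference $|G(\cdot-u-h)|-|G(\cdot-u)|$ is supported in an interval of length at most $2R+2$ (independent of $u$) and has sup-norm at most $\omega(|G|,|h|)$, the modulus of continuity of $|G|$ (independent of $u$). On a set of uniformly bounded measure the Luxemburg norm is dominated by a constant multiple of the sup-norm: estimating the modular $I_{p'(\cdot)}$ and using $p'(y)\geq(p')^{-}\geq1$ (where $(p')^{-}$ is the essential infimum of $p'$) gives $\big\|\,|G(\cdot-u-h)|-|G(\cdot-u)|\,\big\|_{p'(\cdot)}\leq c(R,(p')^{-})\,\omega(|G|,|h|)$, a bound independent of $u$ that tends to $0$ as $h\to0$. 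Combined with the uniform bound on $\|\mathsf{S}_{1}f\|_{p(\cdot)}$, this yields uniform continuity of $F_{f}$; the essential work is precisely this uniform-in-$u$ passage from the sup-norm to the Luxemburg norm over the moving but uniformly bounded support of $|G|$.
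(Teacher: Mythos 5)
Your argument is correct, but for the uniform continuity in (a) you take a genuinely different route from the paper. The paper proceeds by density: it first proves uniform continuity of $F_{H}$ for $H\in C_{c}\left( \boldsymbol{R}\right) $ directly from the uniform continuity of $H$ itself, and then passes to general $f\in L_{p\left( \cdot \right) }$ by a three-term $\varepsilon $-argument, bounding the error terms by $\left\vert F_{f-H}\left( u_{i}\right) \right\vert \leq 2\left\Vert \mathsf{S}_{1}\left( f-H\right) \left( \cdot +u_{i}\right) \right\Vert _{p\left( \cdot \right) }\leq 2c_{5}\left\Vert f-H\right\Vert _{p\left( \cdot \right) }$; that is, it invokes the uniform-in-$\tau $ boundedness of Theorem \ref{stekRR} a second time precisely to compensate for the lack of translation invariance, and it needs the density of $C_{c}\left( \boldsymbol{R}\right) $ in $L_{p\left( \cdot \right) }$ (the Kokilashvili--Samko result cited there). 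You instead substitute $y=x+u$ to shift the translation onto the fixed kernel $G$, after which only the single bound $\left\Vert \mathsf{S}_{1}f\right\Vert _{p\left( \cdot \right) }\leq c_{5}\left\Vert f\right\Vert _{p\left( \cdot \right) }$ is needed, together with the elementary modular estimate showing that on sets of uniformly bounded measure the Luxemburg norm is dominated by the sup-norm, uniformly in the location of the support (this is the correct key observation: sup-norm and support measure are translation invariant even though the exponent $p^{\prime }\left( \cdot \right) $ is not). Your route buys a quantitative modulus, $\left\vert F_{f}\left( u+h\right) -F_{f}\left( u\right) \right\vert \leq C\left\Vert f\right\Vert _{p\left( \cdot \right) }\,\omega \left( \left\vert G\right\vert ,\left\vert h\right\vert \right) $ uniformly in $u$, and dispenses with the density theorem; the paper's route is softer and reuses machinery it needs anyway. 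Two minor points you should patch but which are not gaps: when $p^{-}=1$ one has $p^{\prime }=\infty $ on $\left\{ p=1\right\} $, so the modular of $L_{p^{\prime }\left( \cdot \right) }$ carries an essential-sup term there; your choice of $\lambda $ handles it since $\left\Vert D_{u,h}/\lambda \right\Vert _{\infty }\leq 1$ once $\lambda \geq \omega \left( \left\vert G\right\vert ,\left\vert h\right\vert \right) $, and the relevant lower exponent is $\left( p^{\prime }\right) ^{-}=\left( p^{+}\right) ^{\prime }>1$, finite by (\ref{gag21}). Your treatment of (b) and of boundedness in (a) coincides with the paper's, which dismisses (b) as immediate from the definition and obtains boundedness from H\"{o}lder's inequality (\ref{holder}) plus Theorem \ref{stekRR}.
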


Main theorem of this section is as follows.

\begin{theorem}
\label{tra} Let $p\in P^{Log}\left( \boldsymbol{R}\right) $. If $f,g\in
L_{p\left( \cdot \right) }$ and%
\begin{equation*}
\left\Vert F_{f}\right\Vert _{C\left( \boldsymbol{R}\right) }\leq \mathbf{c}%
_{1}\left\Vert F_{g}\right\Vert _{C\left( \boldsymbol{R}\right) },
\end{equation*}%
holds with an absolute constant $\mathbf{c}_{1}>0$, then, norm inequality%
\begin{equation}
\left\Vert f\right\Vert _{p\left( \cdot \right) }\leq c_{8}\left( \mathbf{c}%
_{1},p^{+},c_{3}\left( p\right) \right) \left\Vert g\right\Vert _{p\left(
\cdot \right) }  \label{trak}
\end{equation}%
also holds with $c_{8}\left( \mathbf{c}_{1},p^{+},c_{3}\left( p\right)
\right) :=48c_{7}\left( c_{3}\left( p\right) \right) \mathbf{c}%
_{1}c_{5}\left( p^{+},c_{3}\left( p\right) \right) $.
\end{theorem}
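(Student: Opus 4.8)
The plan is to prove the estimate by sandwiching the variable-exponent norms of $f$ and $g$ between the uniform norms $\left\Vert F_f\right\Vert_{C(\boldsymbol{R})}$ and $\left\Vert F_g\right\Vert_{C(\boldsymbol{R})}$ and then inserting the hypothesis. Concretely, I would establish two one-sided estimates, valid for every admissible weight $G$: an upper bound $\left\Vert F_g\right\Vert_{C(\boldsymbol{R})}\leq 2c_5\left(p^+,c_3(p)\right)\left\Vert g\right\Vert_{p(\cdot)}$ and a lower bound $\left\Vert f\right\Vert_{p(\cdot)}\leq 24c_7\left(c_3(p)\right)\sup_G\left\Vert F_f\right\Vert_{C(\boldsymbol{R})}$. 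Combining these with the assumed inequality $\left\Vert F_f\right\Vert_{C(\boldsymbol{R})}\leq\mathbf{c}_1\left\Vert F_g\right\Vert_{C(\boldsymbol{R})}$ (which, holding for each $G$, passes to the suprema) yields $\left\Vert f\right\Vert_{p(\cdot)}\leq 24c_7\cdot\mathbf{c}_1\cdot 2c_5\left\Vert g\right\Vert_{p(\cdot)}=c_8\left\Vert g\right\Vert_{p(\cdot)}$, which is exactly the claimed constant.

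The upper estimate is the routine half. Since $\left(\mathsf{S}_1 g\right)(x+\tau)=\mathcal{U}_\tau g(x)$, the pointwise bound $\left\vert F_g(u)\right\vert\leq\int_{\boldsymbol{R}}\left\vert\mathcal{U}_u g(x)\right\vert\left\vert G(x)\right\vert\,dx$ holds for every $u$. I would then apply the H\"older-type inequality contained in the upper half of Theorem \ref{du} (contributing the factor $2$, using $\left\Vert G\right\Vert_{p'(\cdot)}\leq 1$), followed by the uniform boundedness of the Steklov family from Theorem \ref{stekRR}, namely $\left\Vert\mathcal{U}_u g\right\Vert_{p(\cdot)}\leq c_5\left\Vert g\right\Vert_{p(\cdot)}$. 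Taking the supremum over $u$ gives $\left\Vert F_g\right\Vert_{C(\boldsymbol{R})}\leq 2c_5\left\Vert g\right\Vert_{p(\cdot)}$, uniformly in the weight $G$.

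For the lower estimate I would start from the lower half of the duality Theorem \ref{du}, which gives $\left\Vert f\right\Vert_{p(\cdot)}\leq 12c_7\sup_{H}\int_{\boldsymbol{R}}\left\vert f(x)H(x)\right\vert\,dx$, the supremum being over $H\in L_{p'(\cdot)}\cap C_c^\infty$ with $\left\Vert H\right\Vert_{p'(\cdot)}\leq 1$. The task is then to dominate each norming integral $\int\left\vert fH\right\vert$ by a constant multiple of $\sup_G\left\Vert F_f\right\Vert_{C(\boldsymbol{R})}$. Here I would exploit the self-adjointness of the Steklov mean, $\int\left(\mathsf{S}_1 f\right)\varphi=\int f\,\left(\mathsf{S}_1\varphi\right)$, to rewrite $F_f(u)=\int_{\boldsymbol{R}}f(y)\left(\mathsf{S}_1\left\vert G\right\vert\right)(y-u)\,dy$, so that $F_f$ is precisely $f$ tested against the translates of the nonnegative smoothing $\mathsf{S}_1\left\vert G\right\vert$; letting $G$ and $u$ vary is meant to realize, up to a harmless factor $2$ (absorbing both the passage to $\sup\left\vert\cdot\right\vert$ through positive and negative parts and the fact that the supremum need not be attained), the norming functional attached to $H$. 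This would produce $\sup_H\int\left\vert fH\right\vert\leq 2\sup_G\left\Vert F_f\right\Vert_{C(\boldsymbol{R})}$ and hence the desired $\left\Vert f\right\Vert_{p(\cdot)}\leq 24c_7\sup_G\left\Vert F_f\right\Vert_{C(\boldsymbol{R})}$.

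The main obstacle is exactly this last reduction. Because $F_f$ depends on $f$ only through the Steklov average $\mathsf{S}_1 f$ and only through the nonnegative test weights $\left\vert G\right\vert$, one must argue that the continuous-function norm $\left\Vert F_f\right\Vert_{C(\boldsymbol{R})}$ still controls the full norm $\left\Vert f\right\Vert_{p(\cdot)}$ rather than merely that of the regularized function; this is where the interplay of the translations $u$, the density of the admissible weights, and the uniform Steklov bound must be used with care, and it is the step I expect to be the most delicate. Once it is in place, the three constants multiply to give $c_8=48c_7\left(c_3(p)\right)\mathbf{c}_1 c_5\left(p^+,c_3(p)\right)$.
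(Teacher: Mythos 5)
Your first half is correct and is precisely the paper's own argument: for every admissible $G$, H\"{o}lder's inequality (\ref{holder}) together with Theorem \ref{stekRR} gives $\left\Vert F_{g}\right\Vert _{C\left( \boldsymbol{R}\right) }\leq 2c_{5}\left( p^{+},c_{3}\left( p\right) \right) \left\Vert g\right\Vert _{p\left( \cdot \right) }$, and your constants multiply to the stated $c_{8}$. The problem is the second half. You reduce Theorem \ref{tra} to the inequality $\left\Vert f\right\Vert _{p\left( \cdot \right) }\leq 24c_{7}\left( c_{3}\left( p\right) \right) \sup_{G}\left\Vert F_{f}\right\Vert _{C\left( \boldsymbol{R}\right) }$, propose to get it from the duality Theorem \ref{du} plus the self-adjointness identity $F_{f}\left( u\right) =\int_{\boldsymbol{R}}f\left( y\right) \left( \mathsf{S}_{1}\left\vert G\right\vert \right) \left( y-u\right) dy$, and then explicitly leave the decisive step, $\sup_{H}\int \left\vert fH\right\vert \leq 2\sup_{G}\left\Vert F_{f}\right\Vert _{C\left( \boldsymbol{R}\right) }$, as ``the most delicate step''. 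Since the first half is routine, that deferred step \emph{is} the theorem; a proposal that does not carry it out is a restatement of the difficulty, not a proof.

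Moreover, the step you postponed would in fact fail, so no elaboration of your plan can complete it. Your identity shows that $F_{f}$ tests $f$ only against weights of the form $\left( \mathsf{S}_{1}\left\vert G\right\vert \right) \left( \cdot -u\right) $, and these are uniformly bounded: for $p\equiv 2$, Cauchy--Schwarz gives $\left( \mathsf{S}_{1}\left\vert G\right\vert \right) \left( y\right) =\int_{y-1/2}^{y+1/2}\left\vert G\right\vert \leq \left\Vert G\right\Vert _{L_{2}\left( \boldsymbol{R}\right) }\leq 1$, hence $\sup_{G}\left\Vert F_{f}\right\Vert _{C\left( \boldsymbol{R}\right) }\leq \left\Vert f\right\Vert _{L_{1}\left( \boldsymbol{R}\right) }$. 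The norming functions $H$ of Theorem \ref{du}, by contrast, may be large exactly where $f$ concentrates, and varying $u$ cannot recover this because $\mathsf{S}_{1}$ has already flattened $f$. Concretely, with $p\equiv 2$, $0<a<1/2$ and $f=\chi _{[0,a]}$ one gets $\sup_{G}\left\Vert F_{f}\right\Vert _{C\left( \boldsymbol{R}\right) }\leq a$ while $\left\Vert f\right\Vert _{p\left( \cdot \right) }=a^{1/2}$, so the bound you need fails as $a\rightarrow 0$; pairing this $f$ with $g=a\chi _{[0,1]}$ (for which a short computation yields $\left\Vert F_{f}\right\Vert _{C\left( \boldsymbol{R}\right) }\leq 4\left\Vert F_{g}\right\Vert _{C\left( \boldsymbol{R}\right) }$ for every admissible $G$, while $\left\Vert f\right\Vert _{2}/\left\Vert g\right\Vert _{2}=a^{-1/2}$) shows the obstruction sits in the statement itself, not merely in your route to it. You should also know that the paper's own proof stalls at the identical point: it chooses $\tilde{G}_{\varepsilon }$ nearly norming $f$ and then writes $\int_{\boldsymbol{R}}\mathsf{S}_{1}\left( f\right) \left\vert \tilde{G}_{\varepsilon }\right\vert dx=\mathsf{S}_{1}\left( \int_{\boldsymbol{R}}f\left\vert \tilde{G}_{\varepsilon }\right\vert dx\right) \geq \frac{1}{12c_{7}\left( c_{3}\left( p\right) \right) }\left\Vert f\right\Vert _{p\left( \cdot \right) }-\varepsilon $, which tacitly assumes that $\tilde{G}_{\varepsilon }$ also norms every translate $f\left( \cdot +t\right) $, $\left\vert t\right\vert \leq 1/2$, although Theorem \ref{du} provides this only at $t=0$. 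So you located the crux correctly, but closing it requires stronger hypotheses or a different formulation of the transference, not more care with the tools you list.
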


For the troof of these results we will need the following Propositions.

\begin{proposition}
\label{pr2}(a) $C_{c}\left( \boldsymbol{R}\right) $ and $C_{c}^{\infty }$
are dense subsets of $L_{p}\left( \boldsymbol{R}\right) $, $1\leq p<\infty .$%
(Theorems 17.10 and 23.59 of \cite[p. 415 and p. 575]{yeh}).

(b) $C_{c}\left( \boldsymbol{R}\right) $ contained $L_{\infty }\left( 
\boldsymbol{R}\right) $ but not dense (Remark 17.11 of \cite[p.416]{yeh}) in 
$L_{\infty }\left( \boldsymbol{R}\right) .$

(c) if $r\in \mathrm{N}$ and $f\in C_{c}^{r}$, then, $\mathsf{S}_{\delta
}\left( f\right) \in C_{c}^{r}$.
\end{proposition}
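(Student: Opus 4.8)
The plan is to dispose of parts (a) and (b) by citation and to concentrate all genuine work on part (c). Parts (a) and (b) are standard facts about the classical Lebesgue spaces and are already attributed to the quoted results of \cite{yeh} (Theorems 17.10, 23.59 and Remark 17.11); I would simply invoke them, since the density of $C_{c}(\boldsymbol{R})$ and $C_{c}^{\infty }$ in $L_{p}(\boldsymbol{R})$ for $1\leq p<\infty $, together with the failure of density in $L_{\infty }$, needs no further argument here.

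For part (c), fix $f\in C_{c}^{r}$ with $\operatorname{supp}f\subseteq \lbrack a,b]$. First I would establish that $\mathsf{S}_{\delta }f$ has compact support. From the defining formula $\mathsf{S}_{\delta }f(x)=\frac{1}{\delta }\int_{x-\delta /2}^{x+\delta /2}f(t)\,dt$, the integrand vanishes identically whenever the interval $[x-\delta /2,x+\delta /2]$ is disjoint from $[a,b]$, which occurs as soon as $x<a-\delta /2$ or $x>b+\delta /2$. Hence $\mathsf{S}_{\delta }f$ is supported in the compact set $[a-\delta /2,b+\delta /2]$.

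Next I would verify the $C^{r}$-regularity. The cleanest route uses the fixed-limit representation $\mathsf{S}_{\delta }f(x)=\frac{1}{\delta }\int_{-\delta /2}^{\delta /2}f(x+t)\,dt$ and differentiation under the integral sign: because $f\in C^{r}$, every derivative $f^{(k)}$ with $0\leq k\leq r$ is continuous and compactly supported, hence bounded, which legitimises differentiating under the integral and yields the commutation identity $(\mathsf{S}_{\delta }f)^{(k)}=\mathsf{S}_{\delta }(f^{(k)})$ for each such $k$; this is exactly the mechanism behind Theorem \ref{Fu}(b). In particular $(\mathsf{S}_{\delta }f)^{(r)}=\mathsf{S}_{\delta }(f^{(r)})$, and since $f^{(r)}$ is continuous its Steklov average is again continuous. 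Therefore $\mathsf{S}_{\delta }f$ has continuous derivatives up to order $r$, i.e. $\mathsf{S}_{\delta }f\in C^{r}$, and combined with the support computation this gives $\mathsf{S}_{\delta }f\in C_{c}^{r}$.

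I expect no serious obstacle; the only point demanding care is the justification of differentiating under the integral sign, and even that can be bypassed by using the variable-limit formula together with the fundamental theorem of calculus, which gives $(\mathsf{S}_{\delta }f)^{\prime }(x)=\frac{1}{\delta }\bigl(f(x+\delta /2)-f(x-\delta /2)\bigr)$ directly. As a difference of translates of a $C^{r}$ function this is itself $C^{r}$, so in fact $\mathsf{S}_{\delta }f\in C^{r+1}$, which is more than enough. I would present whichever of these two arguments is shorter and note its compatibility with Theorem \ref{Fu}(b).
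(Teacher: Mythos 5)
Your proposal is correct and matches the paper's approach: the paper likewise handles (a) and (b) by citing \cite{yeh}, and its entire proof of (c) is the remark that it ``follows from definitions,'' which is exactly the routine verification you carry out (compact support from the vanishing of the integrand off $[a-\delta/2,\,b+\delta/2]$, and $C^{r}$-regularity via $(\mathsf{S}_{\delta}f)^{(k)}=\mathsf{S}_{\delta}(f^{(k)})$ or the fundamental theorem of calculus). Your observation that the variable-limit formula even gives $\mathsf{S}_{\delta}f\in C^{r+1}$ is a correct bonus beyond what the statement requires.
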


\begin{proposition}
(\cite[Theorem 2.26]{DHHR11}) Let $B\subseteq \boldsymbol{R}$ be a
measurable set. If $1\leq p(x)<p_{B}^{+}<\infty $, $p^{\prime
}(x)=p(x)/(p(x)-1)$, $f\in L_{p(\cdot )}(B)$, and $g\in L_{p^{\prime }(\cdot
)}(B)$, then, H\"{o}lder's inequality%
\begin{equation}
\int\nolimits_{B}f(x)g(x)dx\leq 2\left\Vert f\right\Vert _{p(\cdot
),B}\left\Vert g\right\Vert _{p^{\prime }(\cdot ),B}  \label{holder}
\end{equation}%
holds$.$
\end{proposition}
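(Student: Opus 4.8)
The plan is to derive (\ref{holder}) from the scalar Young inequality together with the norm--modular correspondence on the unit ball, after a standard homogeneity reduction. Since $\int_{B}f(x)g(x)\,dx\leq \int_{B}\left\vert f(x)g(x)\right\vert \,dx$, it suffices to bound the right-hand side. First I would dispose of the degenerate cases: if $\left\Vert f\right\Vert _{p(\cdot ),B}=0$ or $\left\Vert g\right\Vert _{p^{\prime }(\cdot ),B}=0$, then the corresponding function vanishes almost everywhere on $B$ and (\ref{holder}) is trivial. Otherwise, both sides being positively homogeneous of degree one in $f$ and in $g$, I may replace $f$ by $f/\left\Vert f\right\Vert _{p(\cdot ),B}$ and $g$ by $g/\left\Vert g\right\Vert _{p^{\prime }(\cdot ),B}$; this reduces the claim to the normalized statement $\int_{B}\left\vert fg\right\vert \leq 2$ under the assumption $\left\Vert f\right\Vert _{p(\cdot ),B}=\left\Vert g\right\Vert _{p^{\prime }(\cdot ),B}=1$.

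The key analytic input is the \emph{unit ball property}: $\left\Vert h\right\Vert _{q(\cdot ),B}\leq 1$ implies $I_{q(\cdot ),B}(h)\leq 1$. I would justify it by monotone convergence: the map $\lambda \mapsto I_{q(\cdot ),B}(h/\lambda )$ is nonincreasing, so the infimum defining the Luxemburg norm is attained, whence $I_{q(\cdot ),B}(h/\left\Vert h\right\Vert _{q(\cdot ),B})\leq 1$ and therefore $I_{q(\cdot ),B}(h)\leq 1$ once $\left\Vert h\right\Vert _{q(\cdot ),B}\leq 1$. Applied to the normalized $f$ and $g$ this gives $I_{p(\cdot ),B}(f)\leq 1$ and, on the set where $p^{\prime }$ is finite, $I_{p^{\prime }(\cdot ),B}(g)\leq 1$ (together with $\left\Vert g\right\Vert _{L^{\infty }(\{p=1\})}\leq 1$ from the $\{p^{\prime }=\infty \}$ part of the modular). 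I then use the scalar Young inequality $ab\leq a^{s}/s+b^{s^{\prime }}/s^{\prime }$ for $a,b\geq 0$ and $1<s<\infty $, $1/s+1/s^{\prime }=1$, with $s=p(x)$, $a=\left\vert f(x)\right\vert $, $b=\left\vert g(x)\right\vert $, to obtain at every point where $p(x)>1$
\begin{equation*}
\left\vert f(x)g(x)\right\vert \leq \frac{\left\vert f(x)\right\vert ^{p(x)}}{p(x)}+\frac{\left\vert g(x)\right\vert ^{p^{\prime }(x)}}{p^{\prime }(x)}\leq \left\vert f(x)\right\vert ^{p(x)}+\left\vert g(x)\right\vert ^{p^{\prime }(x)},
\end{equation*}
the last bound using $1/p(x)\leq 1$ and $1/p^{\prime }(x)\leq 1$.

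To conclude I would split $B=B_{0}\cup B_{\ast }$, where $B_{0}:=\{x\in B:p(x)=1\}$ and $B_{\ast }:=\{x\in B:p(x)>1\}$. Integrating the displayed inequality over $B_{\ast }$ gives $\int_{B_{\ast }}\left\vert fg\right\vert \leq I_{p(\cdot ),B_{\ast }}(f)+I_{p^{\prime }(\cdot ),B_{\ast }}(g)$, while on $B_{0}$, where $p\equiv 1$ and $p^{\prime }=\infty $, the bound $\left\Vert g\right\Vert _{L^{\infty }(B_{0})}\leq 1$ yields $\int_{B_{0}}\left\vert fg\right\vert \leq \int_{B_{0}}\left\vert f\right\vert =\int_{B_{0}}\left\vert f\right\vert ^{p(x)}$. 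Adding the two contributions and recombining the $f$-modular over $B$ gives
\begin{equation*}
\int_{B}\left\vert fg\right\vert \leq I_{p(\cdot ),B}(f)+I_{p^{\prime }(\cdot ),B_{\ast }}(g)\leq 1+1=2,
\end{equation*}
which is the normalized form of (\ref{holder}); undoing the normalization restores the factor $\left\Vert f\right\Vert _{p(\cdot ),B}\left\Vert g\right\Vert _{p^{\prime }(\cdot ),B}$.

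The scalar Young inequality and the measure-theoretic splitting are routine. The main obstacle is the careful treatment of the set $\{p=1\}$, where $p^{\prime }$ is infinite and the modular (\ref{Ip}) does not literally apply: here one must use the correct norm--modular correspondence for the generalized space, whose modular carries an $L^{\infty }$ contribution over $\{p^{\prime }=\infty \}$, and check that the normalization $\left\Vert g\right\Vert _{p^{\prime }(\cdot ),B}=1$ forces $\left\Vert g\right\Vert _{L^{\infty }(\{p=1\})}\leq 1$. With this in hand the constant $2$ emerges sharply as the sum of the two unit modular bounds.
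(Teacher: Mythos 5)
Your proof is correct, but note that there is nothing in the paper to compare it against: the paper states this proposition purely as a citation (Theorem 2.26 of the quoted reference) and supplies no proof of its own. Your argument is exactly the standard proof underlying that citation: reduce by homogeneity to $\left\Vert f\right\Vert _{p(\cdot ),B}=\left\Vert g\right\Vert _{p^{\prime }(\cdot ),B}=1$ after disposing of the zero-norm cases, invoke the unit ball property (norm at most one implies modular at most one), apply the pointwise Young inequality with exponent $s=p(x)$ on $\{p>1\}$, discard the factors $1/p(x)\leq 1$ and $1/p^{\prime }(x)\leq 1$, and integrate to get the constant $2=1+1$. Two details you handle well deserve emphasis. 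First, your justification of the unit ball property should rest (as you indicate) on monotone convergence, i.e.\ left-continuity of $\lambda \mapsto I_{p(\cdot ),B}(h/\lambda )$, since monotonicity alone does not yield attainment of the Luxemburg infimum; this also covers the paper's slightly nonstandard definition of the norm with the strict inequality $I_{p(\cdot ),B}(f/\eta )<1$. Second, the paper's modular (\ref{Ip}) is literally defined only for finite exponents, whereas the statement permits $p(x)=1$, hence $p^{\prime }(x)=\infty $, on a set of positive measure; your splitting $B=\{p=1\}\cup \{p>1\}$ and the use of the generalized modular with an $L^{\infty }$ contribution on $\{p^{\prime }=\infty \}$, forcing $\left\Vert g\right\Vert _{L^{\infty }(\{p=1\})}\leq 1$ under the normalization, is precisely the repair needed to make the argument rigorous there. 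So the proposal is a complete and correct proof of the cited result, essentially reproducing the textbook argument rather than deviating from anything in the paper.
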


\subsubsection{Proofs of Section 1}

\begin{proof}[Proof of Proposition \protect\ref{pr2}]
(a) and (b) are known. (c) is follows from definitions.
\end{proof}

\begin{proof}[Proof of Theorem \protect\ref{Fu}]
(a) Since $C_{c}\left( \boldsymbol{R}\right) $ is a dense subset (\cite[%
Theorem 4.1 (I)]{kosa03gmj}) of $L_{p\left( \cdot \right) }$, we consider
functions $H\in C_{c}\left( \boldsymbol{R}\right) $ and prove that $%
F_{H}\left( \cdot \right) =\int\nolimits_{\boldsymbol{R}}(S_{1}H)\left(
x+u_{1}\right) \left\vert G\left( x\right) \right\vert dx$ is bounded and
uniformly continuous on $\boldsymbol{R}$, where $G\in L_{p^{\prime }\left(
\cdot \right) }\cap C_{c}^{\infty }$ and $\left\Vert G\right\Vert
_{p^{\prime }\left( \cdot \right) }\leq 1$. Boundedness of $F_{H}\left(
\cdot \right) $ is easy consequence of the H\"{o}lder's inequality (\ref%
{holder}) and Theorem \ref{stekRR}. On the other hand, note that $H$ is
uniformly continuous on $\boldsymbol{R}$, see e.g. Lemma 23.42 of \cite[%
pp.557-558]{yeh}. Take $\varepsilon >0$ and $u_{1},u_{2},x\in \boldsymbol{R}$%
. Then, there exists a $\delta :=\delta \left( \varepsilon \right) >0$ such
that%
\begin{equation*}
\left\vert H\left( x+u_{1}\right) -H\left( x+u_{2}\right) \right\vert \leq 
\frac{\varepsilon }{2\left( 1+\left\vert \text{supp}\left( G\right)
\right\vert \right) }
\end{equation*}%
for $\left\vert u_{1}-u_{2}\right\vert <\delta $. Then, for $\left\vert
u_{1}-u_{2}\right\vert <\delta $, $u_{1},u_{2}\in \boldsymbol{R}$ we have%
\begin{equation*}
\left\vert F_{H}\left( u_{1}\right) -F_{H}\left( u_{2}\right) \right\vert
=\left\vert \int\nolimits_{\boldsymbol{R}}S_{1}\left( H\left( x+u_{1}\right)
-H\left( x+u_{2}\right) \right) \left\vert G\left( x\right) \right\vert
dx\right\vert
\end{equation*}%
\begin{equation*}
\leq \frac{1}{2\left( 1+\left\vert \text{supp}\left( G\right) \right\vert
\right) }\int\nolimits_{\boldsymbol{R}}\left\vert S_{1}\left( \varepsilon
\right) \right\vert \left\vert G\left( x\right) \right\vert dx=\frac{%
\varepsilon }{2\left( 1+\left\vert \text{supp}\left( G\right) \right\vert
\right) }\int\nolimits_{\boldsymbol{R}}\left\vert G\left( x\right)
\right\vert dx
\end{equation*}%
\begin{equation*}
\leq \frac{\varepsilon }{\left( 1+\left\vert \text{supp}\left( G\right)
\right\vert \right) }\left( 1+\left\vert \text{supp}\left( G\right)
\right\vert \right) \left\Vert G\right\Vert _{p^{\prime }\left( \cdot
\right) }\leq \varepsilon .
\end{equation*}%
Now, the conclusion of Theorem \ref{Fu} follows for the class $C_{c}\left( 
\boldsymbol{R}\right) $. For the general case $f\in L_{p\left( \cdot \right)
}$ there exists an $H\in C_{c}\left( \boldsymbol{R}\right) $ so that 
\begin{equation*}
\left\Vert f-H\right\Vert _{p\left( \cdot \right) }<\xi /8c
\end{equation*}%
for any $\xi >0.$ Then, for this $\xi $,%
\begin{equation*}
\left\vert F_{f}\left( u_{1}\right) -F_{f}\left( u_{2}\right) \right\vert
\leq \left\vert \int\nolimits_{\boldsymbol{R}}\mathsf{S}_{1}(f-H)\left(
x+u_{1}\right) \left\vert G\left( x\right) \right\vert dx\right\vert +
\end{equation*}%
\begin{equation*}
+\left\vert \int\nolimits_{\boldsymbol{R}}\mathsf{S}_{1}\left( H\left(
x+u_{1}\right) -H\left( x+u_{2}\right) \right) \left\vert G\left( x\right)
\right\vert dx\right\vert
\end{equation*}%
\begin{equation*}
+\left\vert \int\nolimits_{\boldsymbol{R}}\mathsf{S}_{1}(H-f)\left(
x+u_{2}\right) \left\vert G\left( x\right) \right\vert dx\right\vert
\end{equation*}%
\begin{equation*}
\leq 2\left\Vert \mathsf{S}_{1}\left( f-H\right) \left( \cdot +u_{1}\right)
\right\Vert _{p\left( \cdot \right) }+\left\vert \int\nolimits_{\boldsymbol{R%
}}\mathsf{S}_{1}\left( H\left( x+u_{1}\right) -H\left( x+u_{2}\right)
\right) \left\vert G\left( x\right) \right\vert dx\right\vert
\end{equation*}%
\begin{equation*}
+2\left\Vert \mathsf{S}_{1}\left( f-H\right) \left( \cdot +u_{2}\right)
\right\Vert _{p\left( \cdot \right) }
\end{equation*}%
\begin{equation*}
\leq 4c\left\Vert f-H\right\Vert _{p\left( \cdot \right) }+\xi /2\leq \xi
/2+\xi /2=\xi
\end{equation*}%
As a result $F_{f}$ is bounded, uniformly continuous function defined on $%
\boldsymbol{R}.$

(b) can be obtained easily from definition.
\end{proof}

\begin{proof}[Proof of Theorem \protect\ref{tra}]
Let $f\in L_{p\left( \cdot \right) }$ be nonnegative. If $\left\Vert
f\right\Vert _{p\left( \cdot \right) }=0$, then, the result (\ref{trak}) is
obvious. So we assume that $\infty >\left\Vert f\right\Vert _{p\left( \cdot
\right) }>0$. In this case%
\begin{align*}
\left\Vert F_{f}\right\Vert _{C\left( \boldsymbol{R}\right) }& \leq \mathbf{c%
}_{1}\left\Vert F_{g}\right\Vert _{C\left( \boldsymbol{R}\right) }=\mathbf{c}%
_{1}\left\Vert \int\nolimits_{\boldsymbol{R}}\mathrm{S}_{1}\left( g\right)
\left( u+x\right) \left\vert G\left( x\right) \right\vert dx\right\Vert
_{C\left( \boldsymbol{R}\right) } \\
& =\mathbf{c}_{1}\max_{u\in \boldsymbol{R}}\left\vert \int\nolimits_{%
\boldsymbol{R}}\mathrm{S}_{1}\left( g\right) \left( u+x\right) \left\vert
G\left( x\right) \right\vert dx\right\vert \\
& \leq 2\mathbf{c}_{1}\max_{u\in \boldsymbol{R}}\left\Vert \mathrm{S}%
_{1}\left( g\right) \left( u+\cdot \right) \right\Vert _{p\left( \cdot
\right) }\leq 2c_{5}\left( p^{+},c_{3}\left( p\right) \right) \mathbf{c}%
_{1}\left\Vert g\right\Vert _{p\left( \cdot \right) }
\end{align*}%
where we used hypotesis, H\"{o}lder's inequality and Theorem \ref{stekRR}
respectively. On the other hand, for any $\varepsilon \in \left( 0,\frac{%
\left\Vert f\right\Vert _{p\left( \cdot \right) }}{12c_{7}\left( c_{3}\left(
p\right) \right) }\right) $ and appropriately chosen $\tilde{G}_{\varepsilon
}\in L_{p^{\prime }\left( \cdot \right) }$ with $\left\Vert \tilde{G}%
_{\varepsilon }\right\Vert _{X^{\prime }}\leq 1$ (see e.g. Theorem \ref{du})%
\begin{equation*}
\int\nolimits_{\boldsymbol{R}}\left\vert g\left( x\right) \right\vert
\left\vert \tilde{G}_{\varepsilon }\left( x\right) \right\vert dx\geq \frac{1%
}{12c_{7}\left( c_{3}\left( p\right) \right) }\left\Vert g\right\Vert
_{p\left( \cdot \right) }-\varepsilon \text{,}
\end{equation*}%
one can find%
\begin{eqnarray*}
\left\Vert F_{f}\right\Vert _{C\left( \boldsymbol{R}\right) } &\geq
&\left\vert F_{f}\left( 0\right) \right\vert \geq \int\nolimits_{\boldsymbol{%
R}}\mathsf{S}_{1}\left( f\right) \left( x\right) \left\vert G\left( x\right)
\right\vert dx \\
&=&\mathsf{S}_{1}\left( \int\nolimits_{\boldsymbol{R}}f\left( x\right)
\left\vert G\left( x\right) \right\vert dx\right) \geq \mathsf{S}_{1}\left( 
\frac{1}{12c_{7}\left( c_{3}\left( p\right) \right) }\left\Vert f\right\Vert
_{p\left( \cdot \right) }-\varepsilon \right) \\
&=&\frac{1}{12c_{7}\left( c_{3}\left( p\right) \right) }\left\Vert
f\right\Vert _{p\left( \cdot \right) }-\varepsilon .
\end{eqnarray*}%
In the last inequality we take as $\varepsilon \rightarrow 0+$ and obtain%
\begin{equation*}
\left\Vert F_{f}\right\Vert _{C\left( \boldsymbol{R}\right) }\geq \frac{1}{%
12c_{7}\left( c_{3}\left( p\right) \right) }\left\Vert f\right\Vert
_{p\left( \cdot \right) }\text{.}
\end{equation*}%
Then for $f\in L_{p\left( \cdot \right) }$ we get%
\begin{eqnarray*}
\left\Vert f\right\Vert _{p\left( \cdot \right) } &\leq &24c_{7}\left(
c_{3}\left( p\right) \right) \left\Vert F_{f}\right\Vert _{C\left( 
\boldsymbol{R}\right) }\leq 24c_{7}\left( c_{3}\left( p\right) \right) 
\mathbf{c}_{1}\left\Vert F_{g}\right\Vert _{C\left( \boldsymbol{R}\right) }
\\
&\leq &48c_{7}\left( c_{3}\left( p\right) \right) \mathbf{c}_{1}c_{5}\left(
p^{+},c_{3}\left( p\right) \right) \left\Vert g\right\Vert _{p\left( \cdot
\right) }.
\end{eqnarray*}
\end{proof}

\begin{remark}
Theorem \ref{tra} is a powerful tool to obtain norm inequalities in $%
L_{p\left( \cdot \right) }$ (and other non-translation invariant Banach
spaces of functions) for $p\in P^{Log}\left( \boldsymbol{R}\right) $. In
this work we will use it frequently. See for example the following result.
\end{remark}

As a corollaries of Theorem \ref{tra} we get the following two results:

\begin{theorem}
\label{stekRRR} Suppose that $p\in P^{Log}\left( \boldsymbol{R}\right) $, $%
0<\delta <\infty $ and $\tau \in \boldsymbol{R}$. Then, the family of
operators $\{\mathcal{S}_{\delta ,\tau }f\}$, defined by 
\begin{equation*}
\mathcal{S}_{\delta ,\tau }f(x):=\mathsf{S}_{\delta }f\left( \cdot +\tau
\right) =\frac{1}{\delta }\int\nolimits_{x+\tau -\delta /2}^{x+\tau +\delta
/2}f\left( s\right) ds,\quad x\in \boldsymbol{R},
\end{equation*}%
is uniformly bounded (in $\delta $ and $\tau $) in $L_{p\left( \cdot \right)
}$, namely, 
\begin{equation*}
\left\Vert \mathcal{S}_{\delta ,\tau }f\right\Vert _{p\left( \cdot \right)
}\leq 48c_{7}\left( c_{3}\left( p\right) \right) c_{5}\left(
p^{+},c_{3}\left( p\right) \right) \left\Vert f\right\Vert _{p\left( \cdot
\right) }
\end{equation*}%
holds.
\end{theorem}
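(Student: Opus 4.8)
The plan is to read off the bound directly from the transference principle of Theorem~\ref{tra}, applied to the pair consisting of $\mathcal{S}_{\delta ,\tau }f$ (in the role of $f$) and $f$ itself (in the role of $g$). Concretely, it suffices to verify the hypothesis
\[
\left\Vert F_{\mathcal{S}_{\delta ,\tau }f}\right\Vert _{C\left( \boldsymbol{R}\right) }\leq \left\Vert F_{f}\right\Vert _{C\left( \boldsymbol{R}\right) },
\]
that is, with absolute constant $\mathbf{c}_{1}=1$. Once this is in hand, Theorem~\ref{tra} yields $\left\Vert \mathcal{S}_{\delta ,\tau }f\right\Vert _{p\left( \cdot \right) }\leq c_{8}\left( 1,p^{+},c_{3}\left( p\right) \right) \left\Vert f\right\Vert _{p\left( \cdot \right) }=48c_{7}\left( c_{3}\left( p\right) \right) c_{5}\left( p^{+},c_{3}\left( p\right) \right) \left\Vert f\right\Vert _{p\left( \cdot \right) }$, which is exactly the claimed estimate. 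Thus the whole problem reduces to controlling the auxiliary functional $F_{\mathcal{S}_{\delta ,\tau }f}$ of the averaged function in terms of $F_{f}$.

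The key step is an explicit identity relating the two functionals. Since the Steklov operators $\mathsf{S}_{1}$ and $\mathsf{S}_{\delta }$ are convolutions (with $\chi _{\left[ -1/2,1/2\right] }$ and $\delta ^{-1}\chi _{\left[ -\delta /2,\delta /2\right] }$) and translation commutes with convolution, I first record that $\mathsf{S}_{1}\left( \mathcal{S}_{\delta ,\tau }f\right) \left( y\right) =\mathsf{S}_{\delta }\left( \mathsf{S}_{1}f\right) \left( y+\tau \right) $. Substituting this into the definition (\ref{efef}) of $F$ and interchanging the order of integration by Fubini, I expect to obtain
\[
F_{\mathcal{S}_{\delta ,\tau }f}\left( u\right) =\frac{1}{\delta }\int\nolimits_{-\delta /2}^{\delta /2}F_{f}\left( u+\tau +t\right) dt=\mathsf{S}_{\delta }\left( F_{f}\right) \left( u+\tau \right) .
\]
In words, the functional attached to the averaged function is itself a Steklov average of translates of $F_{f}$. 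From here the comparison is immediate: by Theorem~\ref{Fu}(a) the function $F_{f}$ is bounded on $\boldsymbol{R}$, so a normalized average of its translates cannot exceed its supremum, giving $\left\vert F_{\mathcal{S}_{\delta ,\tau }f}\left( u\right) \right\vert \leq \left\Vert F_{f}\right\Vert _{C\left( \boldsymbol{R}\right) }$ for every $u$, and hence $\left\Vert F_{\mathcal{S}_{\delta ,\tau }f}\right\Vert _{C\left( \boldsymbol{R}\right) }\leq \left\Vert F_{f}\right\Vert _{C\left( \boldsymbol{R}\right) }$. Applying Theorem~\ref{tra} then closes the argument, and the product of constants matches the statement precisely because $\mathbf{c}_{1}=1$.

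The main obstacle I anticipate is not the estimate but two bookkeeping points that must be dispatched cleanly. First, the commutation identity and the Fubini interchange have to be justified; the safest route is to verify them for $f\in C_{c}\left( \boldsymbol{R}\right) $, where $\mathsf{S}_{\delta }f\in C_{c}$ by Proposition~\ref{pr2}(c), so all integrands are continuous, compactly supported in the $x$-variable and over a bounded $t$-interval, whence the interchange is plainly legitimate. Second, Theorem~\ref{tra} is stated for members of $L_{p\left( \cdot \right) }$, so one needs $\mathcal{S}_{\delta ,\tau }f\in L_{p\left( \cdot \right) }$ a priori; for $f\in C_{c}\left( \boldsymbol{R}\right) $ this is automatic since $\mathcal{S}_{\delta ,\tau }f\in C_{c}\subset L_{p\left( \cdot \right) }$.

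Having established the uniform bound on the dense subspace $C_{c}\left( \boldsymbol{R}\right) \subset L_{p\left( \cdot \right) }$, with a constant independent of $\delta $, $\tau $ and $f$, I would finish by extending the inequality to all $f\in L_{p\left( \cdot \right) }$ by density and continuity of the (linear) operator $\mathcal{S}_{\delta ,\tau }$; this extension simultaneously settles the membership $\mathcal{S}_{\delta ,\tau }f\in L_{p\left( \cdot \right) }$ for general $f$ and delivers the stated uniform bound.
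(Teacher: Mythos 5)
Your proposal is correct and follows essentially the same route as the paper: the paper states Theorem \ref{stekRRR} precisely as a corollary of the transference result, Theorem \ref{tra}, which is exactly what you do by verifying the hypothesis $\left\Vert F_{\mathcal{S}_{\delta ,\tau }f}\right\Vert _{C\left( \boldsymbol{R}\right) }\leq \left\Vert F_{f}\right\Vert _{C\left( \boldsymbol{R}\right) }$ (i.e.\ $\mathbf{c}_{1}=1$) via the identity $F_{\mathcal{S}_{\delta ,\tau }f}\left( u\right) =\mathsf{S}_{\delta }\left( F_{f}\right) \left( u+\tau \right) $, so that $c_{8}\left( 1,p^{+},c_{3}\left( p\right) \right) =48c_{7}\left( c_{3}\left( p\right) \right) c_{5}\left( p^{+},c_{3}\left( p\right) \right) $ gives the stated constant. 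Your additional care about first working on $C_{c}\left( \boldsymbol{R}\right) $ and then extending by density fills in bookkeeping the paper leaves implicit, and is sound.
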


\begin{corollary}
\label{coroL} Let $p\in P^{Log}\left( \boldsymbol{R}\right) $, $0<\delta
<\infty $, and $f\in L_{p\left( \cdot \right) }.$ If $\tau =\delta /2$ then,%
\begin{equation*}
\mathcal{S}_{\delta ,\delta /2}f\left( x\right) =\frac{1}{\delta }%
\int\nolimits_{0}^{\delta }f\left( x+t\right) dt=T_{\delta }f\left( x\right)
,
\end{equation*}%
\begin{equation}
\left\Vert T_{\delta }f\right\Vert _{p\left( \cdot \right) }\leq
48c_{7}\left( c_{3}\left( p\right) \right) c_{5}\left( p^{+},c_{3}\left(
p\right) \right) \left\Vert f\right\Vert _{p\left( \cdot \right) }\text{ and}
\label{hurr}
\end{equation}%
\begin{equation*}
\left\Vert (I-T_{\delta })^{r}f\right\Vert _{p\left( \cdot \right) }\leq
\left( 1+48c_{7}\left( c_{3}\left( p\right) \right) c_{5}\left(
p^{+},c_{3}\left( p\right) \right) \right) ^{r}\left\Vert f\right\Vert
_{p\left( \cdot \right) }.
\end{equation*}
\end{corollary}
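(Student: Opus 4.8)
The plan is to derive Corollary \ref{coroL} as an immediate specialization of Theorem \ref{stekRRR}, since the forward Steklov operator $T_{\delta}$ is precisely the operator $\mathcal{S}_{\delta ,\tau }$ with the shift parameter chosen to be $\tau =\delta /2$. In this sense the corollary carries no independent analytic content: everything has already been absorbed into Theorem \ref{stekRRR} (and, beneath it, into the transference Theorem \ref{tra} and the Steklov bound of Theorem \ref{stekRR}), so what remains is a change of variables together with elementary norm-algebra.

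First I would verify the claimed identity $\mathcal{S}_{\delta ,\delta /2}f=T_{\delta }f$. Substituting $\tau =\delta /2$ into the definition of $\mathcal{S}_{\delta ,\tau }$ collapses the symmetric averaging window $\left[ x+\tau -\delta /2,\,x+\tau +\delta /2\right] $ to the forward window $\left[ x,\,x+\delta \right] $, so that $\mathcal{S}_{\delta ,\delta /2}f(x)=\frac{1}{\delta }\int_{x}^{x+\delta }f(s)\,ds$; the substitution $s=x+t$ then rewrites this as $\frac{1}{\delta }\int_{0}^{\delta }f(x+t)\,dt$, which is exactly $T_{\delta }f(x)$ as defined in $(\ast )$. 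This is a one-line computation, and the norm bound $(\ref{hurr})$ for $T_{\delta }$ follows verbatim from Theorem \ref{stekRRR} upon inserting $\tau =\delta /2$: no new estimate is needed, and the constant $48c_{7}\left( c_{3}\left( p\right) \right) c_{5}\left( p^{+},c_{3}\left( p\right) \right) $ is inherited unchanged.

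Finally, for the difference operator I would use the triangle inequality together with submultiplicativity of operator norms. Since $T_{\delta }$ maps $L_{p\left( \cdot \right) }$ into itself, the operator $I-T_{\delta }$ is bounded on $L_{p\left( \cdot \right) }$ and, for every $f$, $\left\Vert \left( I-T_{\delta }\right) f\right\Vert _{p\left( \cdot \right) }\leq \left\Vert f\right\Vert _{p\left( \cdot \right) }+\left\Vert T_{\delta }f\right\Vert _{p\left( \cdot \right) }\leq \left( 1+48c_{7}\left( c_{3}\left( p\right) \right) c_{5}\left( p^{+},c_{3}\left( p\right) \right) \right) \left\Vert f\right\Vert _{p\left( \cdot \right) }$, so its operator norm is at most $1+48c_{7}\left( c_{3}\left( p\right) \right) c_{5}\left( p^{+},c_{3}\left( p\right) \right) $. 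Iterating this bound $r$ times (equivalently, composing the bounded operator $I-T_{\delta }$ with itself $r$ times) yields the stated estimate for $\left\Vert \left( I-T_{\delta }\right) ^{r}f\right\Vert _{p\left( \cdot \right) }$. There is no genuine obstacle here; the only point requiring a word is that the composition is well defined, which is guaranteed precisely by the boundedness of $T_{\delta }$ just established from $(\ref{hurr})$.
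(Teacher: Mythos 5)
Your proposal is correct and follows exactly the route the paper intends: the paper states Corollary \ref{coroL} without a written-out proof, as an immediate consequence of Theorem \ref{stekRRR}, and your argument supplies precisely the expected steps (the change of variables showing $\mathcal{S}_{\delta ,\delta /2}f=T_{\delta }f$, specialization of the uniform bound to $\tau =\delta /2$ to get (\ref{hurr}), and iteration of the triangle inequality to bound $\left\Vert \left( I-T_{\delta }\right) ^{r}f\right\Vert _{p\left( \cdot \right) }$ by $\left( 1+48c_{7}\left( c_{3}\left( p\right) \right) c_{5}\left( p^{+},c_{3}\left( p\right) \right) \right) ^{r}\left\Vert f\right\Vert _{p\left( \cdot \right) }$). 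No gaps; the constants are inherited correctly.
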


\begin{definition}
For $p\in P^{Log}\left( \boldsymbol{R}\right) ,$ $f\in L_{p\left( \cdot
\right) }$, $0<\delta <\infty $, $r\in \mathrm{N}_{0}$, we can define
modulus of smoothness as%
\begin{equation*}
\Omega _{r}(f,\delta )_{p(\cdot )}=\Vert (I-T_{\delta })^{r}f\Vert _{p(\cdot
)}
\end{equation*}%
\begin{equation*}
\Omega _{0}(f,\delta )_{p(\cdot )}:=\Vert f\Vert _{p(\cdot )}=:\Omega
_{r}(f,0)_{p(\cdot )}.
\end{equation*}
\end{definition}

\section{Uniform norm estimates}

In this section, let $\Omega \subseteq \boldsymbol{R}$ be a measurable set
and $C\left( \Omega \right) $ be the collection of functions continuous on $%
\Omega $. If $\Omega \neq \boldsymbol{R}$ and $f\in C\left( \Omega \right) $%
, we will extend $f$ to whole $\boldsymbol{R}$ by $"f\left( s\right) \equiv
0 $ whenever $s\notin \Omega $.$"$ when necessary. For $f\in C\left( \Omega
\right) $ and $\delta \geq 0$, we define the modulus of smoothness as 
\begin{equation}
\Omega _{r}(f,\delta )_{C\left( \Omega \right) }:=\Vert (I-T_{\delta
})^{r}f\Vert _{C\left( \Omega \right) },\quad r\in \mathrm{N},  \label{AA}
\end{equation}%
\begin{equation*}
\Omega _{0}(f,\cdot )_{C\left( \Omega \right) }:=\Vert f\Vert _{C\left(
\Omega \right) }
\end{equation*}%
with $T_{\delta }f$ of ($\ast $).

\begin{lemma}
\label{deg1}Let $0\leq \delta <\infty $, $r\in \mathrm{N}$ and $f\in
C^{r}\left( \Omega \right) $. Then%
\begin{equation}
\frac{d^{r}}{dx^{r}}T_{\delta }f\left( x\right) =T_{\delta }\frac{d^{r}}{%
dx^{r}}f\left( x\right) \text{ on }\Omega .  \label{BB}
\end{equation}
\end{lemma}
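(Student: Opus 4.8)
The plan is to reduce the identity to a single (iterated) application of differentiation under the integral sign. First I would dispose of the trivial case $\delta =0$: since $T_{0}\equiv I$ by the convention in $(\ast )$, both sides of $(\ref{BB})$ reduce to $f^{(r)}$ and there is nothing to prove. So I would assume $\delta >0$ throughout.

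For $\delta >0$ I would start from the defining formula
\begin{equation*}
T_{\delta }f\left( x\right) =\frac{1}{\delta }\int\nolimits_{0}^{\delta }f\left( x+t\right) dt.
\end{equation*}
The key observation is that, for $f\in C^{r}\left( \Omega \right) $, the map $\left( x,t\right) \mapsto f\left( x+t\right) $ together with each of its $x$-partial derivatives $\left( x,t\right) \mapsto f^{(k)}\left( x+t\right) $, $k=1,\dots ,r$, is jointly continuous. This is precisely the hypothesis that legitimizes repeated differentiation under the integral sign over the fixed compact interval $\left[ 0,\delta \right] $. Carrying this out $r$ times yields
\begin{equation*}
\frac{d^{r}}{dx^{r}}T_{\delta }f\left( x\right) =\frac{1}{\delta }\int\nolimits_{0}^{\delta }\frac{\partial ^{r}}{\partial x^{r}}f\left( x+t\right) dt=\frac{1}{\delta }\int\nolimits_{0}^{\delta }f^{(r)}\left( x+t\right) dt=T_{\delta }f^{(r)}\left( x\right) ,
\end{equation*}
which is exactly $(\ref{BB})$.

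For a more transparent write-up I would instead prove the case $r=1$ and then induct. For $r=1$, differentiation under the integral sign gives $\frac{d}{dx}T_{\delta }f=T_{\delta }f^{\prime }$; this can be double-checked via the change of variables $u=x+t$, which turns $T_{\delta }f\left( x\right) $ into $\frac{1}{\delta }\int\nolimits_{x}^{x+\delta }f\left( u\right) du$, so that the fundamental theorem of calculus shows both $\frac{d}{dx}T_{\delta }f\left( x\right) $ and $T_{\delta }f^{\prime }\left( x\right) $ equal $\frac{1}{\delta }\left( f\left( x+\delta \right) -f\left( x\right) \right) $. For the inductive step, assuming $(\ref{BB})$ at order $r-1$, I would write $\frac{d^{r}}{dx^{r}}T_{\delta }f=\frac{d}{dx}\left( T_{\delta }f^{(r-1)}\right) $ and apply the case $r=1$ to $f^{(r-1)}\in C^{1}\left( \Omega \right) $ to conclude $\frac{d}{dx}T_{\delta }f^{(r-1)}=T_{\delta }f^{(r)}$.

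The only point deserving genuine care, rather than being hard, is the justification of the interchange of differentiation and integration; it rests on the joint continuity of the relevant partial derivatives guaranteed by $f\in C^{r}$ together with the finiteness of the interval $\left[ 0,\delta \right] $, so a standard local dominated-convergence or mean-value estimate on the difference quotients suffices. A secondary subtlety arises when $\Omega \neq \boldsymbol{R}$, since the extension-by-zero convention can destroy smoothness across the boundary of $\Omega $; I would handle this by reading $(\ref{BB})$ as a pointwise identity on $\Omega $ and noting that for $x$ with all relevant translates $x+t$, $t\in \left[ 0,\delta \right] $, remaining inside $\Omega $ the computation is unaffected, so no genuine obstruction appears.
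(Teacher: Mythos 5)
Your proposal is correct and follows essentially the same route as the paper: the paper also dismisses $\delta=0$ as obvious, establishes the case $r=1$ via the change of variables $\tau=x+t$ and differentiation of $\frac{1}{\delta}\int_{x}^{x+\delta}f(\tau)\,d\tau$ (i.e.\ the fundamental theorem of calculus), and then obtains $r>1$ by iterating that case, which is exactly your inductive step. Your added remarks on justifying differentiation under the integral sign and on the extension-by-zero convention near $\partial\Omega$ only make explicit points the paper leaves implicit.
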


The following theorem states the main properties of (\ref{AA}).

\begin{theorem}
\label{xv} For $f\in C\left( \Omega \right) $, $0\leq \delta <\infty $, and $%
r\in \mathrm{N}$, the following properties hold.

\begin{enumerate}
\item $\Omega _{r}\left( f,\delta \right) _{C\left( \Omega \right) }$ is
non-negative, non-decreasing function of $\delta $,

\item $\Omega _{r}\left( f,\delta \right) _{C\left( \Omega \right) }$ is
subadditive with respect to $f$,

\item $\left\Vert T_{\delta }f\right\Vert _{C\left( \Omega \right) }\leq
\left\Vert f\right\Vert _{C\left( \Omega \right) },$

\item $\Omega _{r}\left( f,\delta \right) _{C\left( \Omega \right) }\leq
2\Omega _{r-1}\left( f,\delta \right) _{C\left( \Omega \right) }\leq \cdots
\leq 2^{r-1}\Omega _{1}\left( f,\delta \right) _{C\left( \Omega \right)
}\leq 2^{r}\left\Vert f\right\Vert _{C\left( \Omega \right) },\quad $(***)

\item $\Omega _{r}\left( f,\delta \right) _{C\left( \Omega \right) }\leq
2^{-1}\delta \Omega _{r-1}\left( f^{\prime },\delta \right) _{C\left( \Omega
\right) }\leq \cdots \leq 2^{-r}\delta ^{r}\left\Vert f^{\left( r\right)
}\right\Vert _{C\left( \Omega \right) }$,$\quad $if $f\in C^{r}\left( \Omega
\right) .$
\end{enumerate}
\end{theorem}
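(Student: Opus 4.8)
The plan is to deduce all five assertions from two structural facts about the forward Steklov operator. Writing $(\sigma_s f)(x):=f(x+s)$ for translation and substituting $t=\delta u$ gives
\[
T_\delta f=\int_0^1\sigma_{\delta u}f\,du,
\]
so $T_\delta$ is a weighted mean of translates; since the $\sigma_{\delta u}$ commute with every $\sigma_s$, so do $T_\delta$ and each power $(I-T_\delta)^k$. I treat the items from easiest to hardest.

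Item (3) is the engine. Because $T_\delta f(x)$ is an average of the values $f(x+t)$, $t\in[0,\delta]$, and the zero-extension convention keeps $|f(x+t)|\le\|f\|_{C(\Omega)}$, I bound $|T_\delta f(x)|\le\frac1\delta\int_0^\delta|f(x+t)|\,dt\le\|f\|_{C(\Omega)}$ and take the supremum. From (3) the formal items follow: subadditivity (2) is linearity of $(I-T_\delta)^r$ plus the triangle inequality, and for the chain (4) I write $(I-T_\delta)^r=(I-T_\delta)(I-T_\delta)^{r-1}$ and use $\|(I-T_\delta)g\|_{C(\Omega)}\le\|g\|_{C(\Omega)}+\|T_\delta g\|_{C(\Omega)}\le 2\|g\|_{C(\Omega)}$, iterating to $\Omega_r\le 2\Omega_{r-1}\le\cdots\le 2^{r-1}\Omega_1\le 2^r\|f\|_{C(\Omega)}$.

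For the smoothness estimate (5) I use an integral representation of a single factor. From $f(x)-f(x+t)=-\int_0^t f'(x+s)\,ds$ and Fubini in $(I-T_\delta)f(x)=\frac1\delta\int_0^\delta[f(x)-f(x+t)]\,dt$ I get
\[
(I-T_\delta)f(x)=-\frac1\delta\int_0^\delta(\delta-s)\,f'(x+s)\,ds,
\]
a mean of translates of $f'$ against the nonnegative weight $(\delta-s)/\delta$ of total mass $\delta/2$. Since $(I-T_\delta)^{r-1}$ is bounded, linear and commutes with translations, I pass it under the integral to obtain $(I-T_\delta)^r f(x)=-\frac1\delta\int_0^\delta(\delta-s)\,[(I-T_\delta)^{r-1}f'](x+s)\,ds$, whence $\Omega_r(f,\delta)_{C(\Omega)}\le\frac\delta2\,\Omega_{r-1}(f',\delta)_{C(\Omega)}$; Lemma \ref{deg1} ensures the derivative factors are handled consistently, $f'\in C^{r-1}(\Omega)$. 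Iterating $r$ times and using $\Omega_0(f^{(r)},\delta)_{C(\Omega)}=\|f^{(r)}\|_{C(\Omega)}$ gives the full chain down to $2^{-r}\delta^r\|f^{(r)}\|_{C(\Omega)}$.

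The subtle assertion, and the step I expect to be the main obstacle, is the monotonicity half of (1) (non-negativity is automatic, being a norm). A bare Steklov difference carries no supremum, so monotonicity in $\delta$ does not follow formally as it does for the supremum-type modulus used earlier. Here I would try to exploit the scaling identity
\[
T_{2\delta}f=\tfrac12\,(I+\sigma_\delta)\,T_\delta f,
\]
obtained by splitting $\int_0^{2\delta}=\int_0^\delta+\int_\delta^{2\delta}$, together with $T_\delta f=\int_0^1\sigma_{\delta u}f\,du$, to relate $(I-T_{\delta_2})^r f$ and $(I-T_{\delta_1})^r f$ for $\delta_1\le\delta_2$. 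The difficulty is precisely the direction: an averaging representation most naturally controls the larger $\delta$ by the smaller one, i.e. the wrong way for ``non-decreasing'', so this is where a naive argument breaks down. I would therefore expect to secure (1) either by a careful sign/averaging analysis of the difference of the two operators at the two scales, or---if only equivalence up to constants is needed downstream---by routing through the $K$-functional $K(f,\delta^r)_{C(\Omega)}:=\inf_{g\in C^r(\Omega)}\{\|f-g\|_{C(\Omega)}+\delta^r\|g^{(r)}\|_{C(\Omega)}\}$, which is monotone by construction and is comparable to $\Omega_r$ via exactly items (4) and (5).
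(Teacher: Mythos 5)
For items (2)--(5) your arguments are correct and essentially the paper's own route. The paper disposes of (2)--(3) as ``known'' and of (4) via the binomial expansion; your factorization $(I-T_{\delta })^{r}=(I-T_{\delta })(I-T_{\delta })^{r-1}$ together with $\left\Vert (I-T_{\delta })g\right\Vert _{C\left( \Omega \right) }\leq 2\left\Vert g\right\Vert _{C\left( \Omega \right) }$ yields the same chain. For (5) the paper simply cites from \cite{AG} the inequality $\left\Vert (I-T_{\delta })f\right\Vert _{C\left( \Omega \right) }\leq 2^{-1}\delta \left\Vert f^{\prime }\right\Vert _{C\left( \Omega \right) }$ and iterates using $\left[ (I-T_{\delta })^{r}f\right] ^{\prime }=(I-T_{\delta })^{r}f^{\prime }$ (Lemma \ref{deg1}); your Fubini representation
\begin{equation*}
(I-T_{\delta })f(x)=-\frac{1}{\delta }\int\nolimits_{0}^{\delta }(\delta
-s)f^{\prime }(x+s)\,ds
\end{equation*}
proves that cited inequality from scratch, and your commutation step plays the same role as the paper's, so on this item you are in fact more self-contained than the paper.

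Your hesitation about the monotonicity half of (1) is the substantive point, and it is justified: the paper gives no argument there, and the claim is false as literally stated. Take $\Omega =\boldsymbol{R}$, $r=1$, $f=\sin $. Then $T_{\delta }\sin (x)=\frac{\sin (\delta /2)}{\delta /2}\sin (x+\delta /2)$, so $(I-T_{\delta })\sin $ is a sinusoid whose uniform norm equals $\left( 1+b^{2}-2b\cos (\delta /2)\right) ^{1/2}$ with $b=\frac{\sin (\delta /2)}{\delta /2}$. At $\delta =\pi $ this is $\sqrt{1+4/\pi ^{2}}>1$, while at $\delta =2\pi $ it equals $1$; hence $\Omega _{1}(\sin ,\pi )_{C\left( \boldsymbol{R}\right) }>\Omega _{1}(\sin ,2\pi )_{C\left( \boldsymbol{R}\right) }$ and $\delta \mapsto \Omega _{r}(f,\delta )_{C\left( \Omega \right) }$ is not non-decreasing. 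What does hold, and what the paper actually proves and uses downstream, is monotonicity up to constants: for $0<\delta _{1}\leq \delta _{2}$, Proposition \ref{DIcor} and Corollary \ref{MMM}(ii) give $\Omega _{r}(f,\delta _{1})_{C\left( \Omega \right) }\leq c\left( r\right) \Omega _{r}(f,\delta _{2})_{C\left( \Omega \right) }$, obtained precisely through the $K$-functional equivalence (\ref{eqA}). That is exactly the fallback you propose, and it is the only viable repair; item (1) should be weakened accordingly, since no averaging or sign analysis can rescue the literal statement.
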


Let $X$ be a Banach space with a norm $\left\Vert \cdot \right\Vert _{X}$
and $r\in \mathrm{N}$. We define Peetre's \textit{K}-functional for the pair 
$X$ and $W_{X}^{r}$ as follows :%
\begin{equation*}
K_{r}\left( f,\delta ,X\right) _{X}:=\inf\limits_{g\in W_{X}^{r}}\left\{
\left\Vert f-g\right\Vert _{X}+\delta ^{r}\left\Vert g^{\left( r\right)
}\right\Vert _{X}\right\} \text{,\quad }\delta >0.
\end{equation*}%
We set $T_{\delta }^{r}f:=\left( T_{\delta }f\right) ^{r}.$

\begin{lemma}
\label{da1}Let $0\leq \delta <\infty $, $r-1\in \mathrm{N}$, and $f\in
C^{r}\left( \Omega \right) $ be given. Then%
\begin{equation}
\frac{d^{r}}{dx^{r}}T_{\delta }^{r}f\left( x\right) =\frac{d}{dx}T_{\delta }%
\frac{d^{r-1}}{dx^{r-1}}T_{\delta }^{r-1}f\left( x\right) \text{\quad on }%
\Omega .  \label{rrss1}
\end{equation}
\end{lemma}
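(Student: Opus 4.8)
The plan is to reduce (\ref{rrss1}) to a single commutation of the Steklov average with a derivative, using Lemma \ref{deg1}. First I would dispose of the degenerate case $\delta =0$, where $T_{0}=I$ and both sides collapse to $f^{(r)}$, so the identity is trivial; hence assume $0<\delta <\infty $. Throughout I read $T_{\delta }^{r}$ as the $r$-fold iterate, so that $T_{\delta }^{r}=T_{\delta }\circ T_{\delta }^{r-1}$ and one factor of $T_{\delta }$ can be peeled off from the outside (this is the reading consistent with the operator powers $(I-T_{\delta })^{r}$ used throughout, and it is the only reading under which (\ref{rrss1}) holds).

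The first substantive step is a regularity observation. Writing $T_{\delta }h(x)=\tfrac{1}{\delta }\int_{x}^{x+\delta }h(s)\,ds$ and differentiating under the integral sign shows that $T_{\delta }$ carries $C^{r}(\Omega )$ into itself. Iterating, $T_{\delta }^{r-1}f\in C^{r}(\Omega )\subseteq C^{r-1}(\Omega )$ whenever $f\in C^{r}(\Omega )$, so every operator appearing below acts on a function of the required smoothness and Lemma \ref{deg1} is legitimately available at order $r-1$.

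Now I would simply factor the differential operator and the averaging operator and swap them. Splitting $\tfrac{d^{r}}{dx^{r}}=\tfrac{d}{dx}\circ \tfrac{d^{r-1}}{dx^{r-1}}$ and $T_{\delta }^{r}=T_{\delta }\circ T_{\delta }^{r-1}$ gives
\[
\frac{d^{r}}{dx^{r}}T_{\delta }^{r}f=\frac{d}{dx}\,\frac{d^{r-1}}{dx^{r-1}}\,T_{\delta }\bigl(T_{\delta }^{r-1}f\bigr).
\]
Applying Lemma \ref{deg1} (at order $r-1$) to the function $h=T_{\delta }^{r-1}f\in C^{r-1}(\Omega )$ yields $\tfrac{d^{r-1}}{dx^{r-1}}T_{\delta }h=T_{\delta }\tfrac{d^{r-1}}{dx^{r-1}}h$, and substituting this turns the right-hand side into $\tfrac{d}{dx}\,T_{\delta }\,\tfrac{d^{r-1}}{dx^{r-1}}\,T_{\delta }^{r-1}f$, which is precisely (\ref{rrss1}).

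I do not expect a genuine obstacle: the entire content is the commutativity of the Steklov average with differentiation already recorded in Lemma \ref{deg1}. The only points requiring care are (i) verifying that the iterate $T_{\delta }^{r-1}f$ still lies in $C^{r-1}(\Omega )$, so that Lemma \ref{deg1} applies at the one place it is invoked, and (ii) fixing the convention for $T_{\delta }^{r}$ so that the factorization $T_{\delta }^{r}=T_{\delta }\circ T_{\delta }^{r-1}$ is available. Should one prefer, the same conclusion follows by a short induction on $r$, commuting $T_{\delta }$ past one derivative at a time and invoking Lemma \ref{deg1} at each step; but the single outermost swap above is the most economical route.
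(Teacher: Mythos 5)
Your proof is correct, but it takes a genuinely shorter route than the paper's. The paper argues by induction on $r$: after working out $r=2$ and $r=3$ explicitly, it writes $T_{\delta }^{k+1}f=T_{\delta }^{k}\Psi $ with $\Psi :=T_{\delta }f$, applies the induction hypothesis to $\Psi $, and then uses Lemma \ref{deg1} at order one to move a single $\frac{d}{dx}$ past $T_{\delta }$. You instead peel off the \emph{outermost} factor, $T_{\delta }^{r}f=T_{\delta }\left( T_{\delta }^{r-1}f\right) $, and invoke Lemma \ref{deg1} exactly once at order $r-1$ on $h=T_{\delta }^{r-1}f$, which gives \eqref{rrss1} in one step and makes the induction unnecessary (this is legitimate, since Lemma \ref{deg1} is stated for arbitrary order). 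What your route requires --- and what you correctly flag --- is the regularity fact that $T_{\delta }$ preserves $C^{r}\left( \Omega \right) $, via $\frac{d}{dx}T_{\delta }h=\frac{1}{\delta }\left( h\left( \cdot +\delta \right) -h\left( \cdot \right) \right) $, so that $h\in C^{r-1}\left( \Omega \right) $; note the paper's induction needs the same fact tacitly (its hypothesis is applied to $\Psi =T_{\delta }f$) but never records it, so making it explicit is a small improvement. Both arguments share an unaddressed caveat inherited from the paper's convention of extending $f$ by zero outside a proper subset $\Omega $, which can spoil differentiability of $T_{\delta }h$ near the boundary; since the paper's own proofs of Lemmas \ref{deg1} and \ref{da1} ignore this as well, it is not a defect of your proposal relative to the paper. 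Net comparison: your single outermost swap is the more economical argument; the paper distributes the same commutation over $r$ inductive steps at order one.
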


\begin{lemma}
\label{Ord} (see e.g.\cite[p.177]{devore}) Let $\Omega \subseteq \boldsymbol{%
R}$ be a measurable set, $\delta >0$, $f\in C\left( \Omega \right) $ and $%
\tilde{T}_{\delta }f\left( \cdot \right) =f\left( \cdot +\delta \right) $.
Then, for any $r\in \mathrm{N}$, there holds
\end{lemma}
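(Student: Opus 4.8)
The plan is to prove the standard binomial representation of the iterated difference operator built from the ordinary shift, i.e.\ that
\begin{equation*}
\left( I-\tilde{T}_{\delta }\right) ^{r}f\left( x\right) =\sum_{k=0}^{r}\binom{r}{k}\left( -1\right) ^{k}f\left( x+k\delta \right) ,\qquad x\in \Omega .
\end{equation*}

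First I would record the iteration property of the shift: for every $k\in \mathbb{N}_{0}$,
\begin{equation*}
\tilde{T}_{\delta }^{k}f\left( x\right) =f\left( x+k\delta \right) ,
\end{equation*}
which follows by a one-line induction on $k$. The base case $k=0$ is $\tilde{T}_{\delta }^{0}=I$, and the inductive step is $\tilde{T}_{\delta }\left( \tilde{T}_{\delta }^{k}f\right) \left( x\right) =\left( \tilde{T}_{\delta }^{k}f\right) \left( x+\delta \right) =f\left( x+\delta +k\delta \right) =f\left( x+\left( k+1\right) \delta \right) $.

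Next, since $I$ and $\tilde{T}_{\delta }$ are linear operators that commute, the binomial theorem is valid in the commutative algebra they generate, so
\begin{equation*}
\left( I-\tilde{T}_{\delta }\right) ^{r}=\sum_{k=0}^{r}\binom{r}{k}\left( -1\right) ^{k}\tilde{T}_{\delta }^{k}.
\end{equation*}
Applying both sides to $f$, evaluating at $x$, and inserting the iteration formula from the previous step gives the asserted expansion. When $\Omega \neq \boldsymbol{R}$ the values $f\left( x+k\delta \right) $ are read through the extension convention $f\equiv 0$ off $\Omega $ fixed at the start of this section, so that each summand is well defined.

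The argument is entirely routine and I do not expect any genuine obstacle. The only point requiring (trivial) justification is the commutativity of $I$ and $\tilde{T}_{\delta }$, which licenses the binomial expansion; since $I$ commutes with every operator, no ordering of factors can arise. If one prefers to avoid invoking the operator binomial theorem, the same identity follows directly by induction on $r$, writing $\left( I-\tilde{T}_{\delta }\right) ^{r}=\left( I-\tilde{T}_{\delta }\right) \left( I-\tilde{T}_{\delta }\right) ^{r-1}$ and using Pascal's rule $\binom{r-1}{k-1}+\binom{r-1}{k}=\binom{r}{k}$ to merge the two resulting sums.
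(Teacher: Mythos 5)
Your proposal proves a different (and much weaker) statement than the lemma actually asserts. In the paper, the words ``there holds'' are completed by the displayed inequality placed immediately after the lemma statement, namely
\begin{equation*}
\frac{1}{r^{r}+2^{r}}\leq \frac{\underset{\left\vert h\right\vert \leq \delta }{\sup }\left\Vert \left( I-\tilde{T}_{h}\right) ^{r}f\right\Vert _{C\left( \Omega \right) }}{K_{r}\left( f,\delta ,L_{p}\left( \Omega \right) \right) _{C\left( \Omega \right) }}\leq 2^{r},
\end{equation*}
that is, the two-sided equivalence, with explicit constants, between the classical modulus of smoothness built from the shift $\tilde{T}_{h}$ and Peetre's $K$-functional. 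This is the Johnen/DeVore--Lorentz theorem, which the paper does not reprove but simply cites (see \cite[p.177]{devore}). The identity you establish, $\left( I-\tilde{T}_{\delta }\right) ^{r}f\left( x\right) =\sum_{k=0}^{r}\binom{r}{k}\left( -1\right) ^{k}f\left( x+k\delta \right) $, is correct but is only an elementary algebraic ingredient; it establishes neither of the two inequalities.

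Concretely, two arguments are missing. For the upper bound $\sup_{\left\vert h\right\vert \leq \delta }\left\Vert \left( I-\tilde{T}_{h}\right) ^{r}f\right\Vert _{C\left( \Omega \right) }\leq 2^{r}K_{r}$ one must split $f=\left( f-g\right) +g$ with $g$ in the smooth class, combine $\left\Vert \left( I-\tilde{T}_{h}\right) ^{r}\left( f-g\right) \right\Vert _{C\left( \Omega \right) }\leq 2^{r}\left\Vert f-g\right\Vert _{C\left( \Omega \right) }$ with the estimate $\left\Vert \left( I-\tilde{T}_{h}\right) ^{r}g\right\Vert _{C\left( \Omega \right) }\leq \left\vert h\right\vert ^{r}\left\Vert g^{\left( r\right) }\right\Vert _{C\left( \Omega \right) }$ --- the latter requires the integral (or repeated mean value) representation of the $r$-th finite difference of a smooth function, not just the binomial formula --- and then take the infimum over $g$. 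For the lower bound $K_{r}\leq \left( r^{r}+2^{r}\right) \sup_{\left\vert h\right\vert \leq \delta }\left\Vert \left( I-\tilde{T}_{h}\right) ^{r}f\right\Vert _{C\left( \Omega \right) }$ one must construct from $f$ itself a competitor $g\in C^{r}\left( \Omega \right) $ (typically by iterated Steklov-type averaging, which is where the factor $r^{r}$ originates) whose distance to $f$ and whose $r$-th derivative are both controlled by the modulus of smoothness; this is the substantive direction of the theorem, and nothing in your proposal addresses it.
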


\begin{equation*}
\frac{1}{r^{r}+2^{r}}\leq \frac{\underset{\left\vert h\right\vert \leq
\delta }{\sup }\left\Vert \left( I-\tilde{T}_{h}\right) ^{r}f\right\Vert
_{C\left( \Omega \right) }}{K_{r}\left( f,\delta ,L_{p}\left( \Omega \right)
\right) _{C\left( \Omega \right) }}\leq 2^{r}\text{.}
\end{equation*}%
Main result of this section is the following theorem.

\begin{theorem}
\label{DI} Let $\Omega \subseteq \boldsymbol{R}$ be a measurable set, $%
0<\delta <\infty $, $f\in C\left( \Omega \right) $, $r\in \mathrm{N}$ and $%
g\in C^{2}\left( \Omega \right) $. Then, the following inequalities%
\begin{align*}
\left\Vert \frac{d}{dx}T_{\delta }f\left( x\right) \right\Vert _{C\left(
\Omega \right) }& \leq \frac{2}{\delta }\left\Vert f\right\Vert _{C\left(
\Omega \right) }, \\
\left\Vert \frac{d^{2}}{dx^{2}}T_{\delta }f\left( x\right) \right\Vert
_{C\left( \Omega \right) }& \leq \frac{2}{\delta }\left\Vert \frac{d}{dx}%
T_{\delta }f\right\Vert _{C\left( \Omega \right) }, \\
\left\Vert g\left( x\right) -T_{\delta }g\left( x\right) +\frac{\delta }{2}%
\frac{d}{dx}g\left( x\right) \right\Vert _{C\left( \Omega \right) }& \leq 
\frac{\delta ^{2}}{6}\left\Vert \frac{d^{2}}{dx^{2}}g\right\Vert _{C\left(
\Omega \right) },
\end{align*}%
\begin{equation}
\left( c_{8}\left( r\right) \right) ^{-1}K_{r}\left( f,\delta ,C\left(
\Omega \right) \right) _{C\left( \Omega \right) }\leq \left\Vert \left(
I-T_{\delta }\right) ^{r}f\right\Vert _{C\left( \Omega \right) }\leq
2^{r}K_{r}\left( f,\delta ,C\left( \Omega \right) \right) _{C\left( \Omega
\right) }\text{,}  \label{eqA}
\end{equation}%
are hold with $c_{8}\left( 1\right) =36$, $c_{8}\left( r\right) =2^{r}\left(
r^{r}+(34)^{r}\right) $ for $r>1$.
\end{theorem}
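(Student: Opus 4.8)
The plan is to prove the four displayed inequalities in the order listed, treating the first three as the analytic lemmas that drive the $K$-functional equivalence in the last one.

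First I would dispatch the two derivative bounds by the fundamental theorem of calculus. Writing $T_\delta f(x)=\frac1\delta\int_x^{x+\delta}f(u)\,du$, differentiation gives the pointwise identity $\frac{d}{dx}T_\delta f(x)=\frac1\delta\bigl(f(x+\delta)-f(x)\bigr)$, valid for every $f\in C(\Omega)$, and taking absolute values yields the first inequality with constant $2/\delta$. For the second inequality I would apply this same first estimate with $\frac{d}{dx}T_\delta f$ in the role of $f$: Lemma \ref{da1} (together with the commutation $\frac{d}{dx}T_\delta=T_\delta\frac{d}{dx}$ of Lemma \ref{deg1}) rewrites the left-hand side of the second inequality as $\frac{d}{dx}T_\delta\bigl(\frac{d}{dx}T_\delta f\bigr)$, to which the first estimate applies directly and returns the bound $\frac2\delta\bigl\|\frac{d}{dx}T_\delta f\bigr\|_{C(\Omega)}$. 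The third inequality is a quadrature estimate: expanding $g(x+t)=g(x)+tg'(x)+\int_0^t(t-s)g''(x+s)\,ds$ by Taylor's formula with integral remainder and averaging over $t\in[0,\delta]$ gives $T_\delta g=g+\frac\delta2 g'+R$ with $R(x)=\frac1\delta\int_0^\delta\!\int_0^t(t-s)g''(x+s)\,ds\,dt$, whence $\|g-T_\delta g+\tfrac\delta2 g'\|_{C(\Omega)}=\|R\|_{C(\Omega)}\le\frac1\delta\|g''\|_{C(\Omega)}\int_0^\delta\frac{t^2}2\,dt=\frac{\delta^2}6\|g''\|_{C(\Omega)}$.

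For the upper estimate in the $K$-functional equivalence I would argue directly, without the preceding three inequalities. Given any $g$ admissible in the infimum defining $K_r$, linearity and the triangle inequality give $\|(I-T_\delta)^rf\|_{C(\Omega)}\le\|(I-T_\delta)^r(f-g)\|_{C(\Omega)}+\|(I-T_\delta)^rg\|_{C(\Omega)}$; the first summand is at most $2^r\|f-g\|_{C(\Omega)}$ by Theorem \ref{xv}(4) and the second is at most $2^{-r}\delta^r\|g^{(r)}\|_{C(\Omega)}$ by Theorem \ref{xv}(5). Since $2^{-r}\le 2^r$, the right-hand side is bounded by $2^r\bigl(\|f-g\|_{C(\Omega)}+\delta^r\|g^{(r)}\|_{C(\Omega)}\bigr)$, and taking the infimum over $g$ gives $\|(I-T_\delta)^rf\|_{C(\Omega)}\le 2^rK_r(f,\delta,C(\Omega))_{C(\Omega)}$.

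The lower estimate is the substantive part. Here I would invoke Lemma \ref{Ord}, which already bounds $K_r(f,\delta)$ by $(r^r+2^r)\sup_{|h|\le\delta}\|(I-\tilde T_h)^rf\|_{C(\Omega)}$; it then suffices to control this finite-difference modulus by the Steklov modulus, that is, to prove $\sup_{|h|\le\delta}\|(I-\tilde T_h)^rf\|_{C(\Omega)}\le c\,\|(I-T_\delta)^rf\|_{C(\Omega)}$ with a constant $c$ for which $c(r^r+2^r)$ reproduces $c_8(r)$. This reverse comparison is the main obstacle, because passing from the averaged difference $I-T_\delta$ back to a genuine translation difference $I-\tilde T_h$ is not a norm-decreasing operation and cannot be read off from the elementary estimates alone. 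My plan is to exploit the third inequality, which shows that $T_\delta$ agrees with the translation $\tilde T_{\delta/2}$ to first order, so that $I-T_\delta$ and $I-\tilde T_{\delta/2}$ differ only by second-order terms that are controlled, on the smoothed functions $T_\delta^kf\in C^k$, through the Bernstein-type bounds of the first and second inequalities. Iterating this comparison in the manner of a Marchaud argument should recover $\sup_{|h|\le\delta}\|(I-\tilde T_h)^rf\|_{C(\Omega)}$ up to a constant multiple of $\|(I-T_\delta)^rf\|_{C(\Omega)}$; the bookkeeping that produces the explicit values $c_8(1)=36$ and $c_8(r)=2^r(r^r+34^r)$ is precisely where the delicate constant-tracking lives, and I expect that step to be the hardest to carry out cleanly.
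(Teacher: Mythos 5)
Your treatment of the first three inequalities and of the right-hand estimate in (\ref{eqA}) is correct and coincides with the paper's argument: the fundamental theorem of calculus gives $\frac{d}{dx}T_\delta f=\frac{1}{\delta}(f(\cdot+\delta)-f)$, the second bound follows by applying the first to $\frac{d}{dx}T_\delta f$, the quadrature estimate follows from Taylor's formula, and the upper bound in (\ref{eqA}) follows from the triangle inequality with Theorem \ref{xv}(4)--(5). The gap is in the left-hand estimate of (\ref{eqA}), which you yourself call the substantive part: you reduce it, via Lemma \ref{Ord}, to the comparison $\sup_{|h|\le\delta}\|(I-\tilde{T}_h)^r f\|_{C(\Omega)}\le c\,\|(I-T_\delta)^r f\|_{C(\Omega)}$, but you never prove this comparison; you only sketch a plan (``iterating \dots in the manner of a Marchaud argument should recover \dots'') and concede that the bookkeeping is the hard part. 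That comparison is precisely Corollary \ref{MMM}(i) of the paper, and there it is \emph{deduced from} (\ref{eqA}) together with Lemma \ref{Ord}, so the natural logical order runs the other way; making your route non-circular means proving the comparison from scratch, which is essentially as hard as the theorem itself. Moreover, even if completed, your route would produce a lower-bound constant of the form $c(r^r+2^r)$ with an unidentified $c$, not the specific values $c_8(1)=36$ and $c_8(r)=2^r(r^r+34^r)$ asserted in the statement.

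The paper's proof of the lower bound uses neither Lemma \ref{Ord} nor translation differences; it is constructive. One takes the explicit competitor $g=\sum_{l=1}^{r}(-1)^{l-1}\binom{r}{l}T_\delta^{2rl}f$ in the $K$-functional, so that $f-g=(I-T_\delta^{2r})^r f$. Factoring $I-T_\delta^{2r}=(I-T_\delta)\sum_{j=0}^{2r-1}T_\delta^{j}$ and using $\|T_\delta\|\le 1$ gives $\|f-g\|_{C(\Omega)}\le (2r)^r\|(I-T_\delta)^r f\|_{C(\Omega)}$. For the derivative term, the three preliminary inequalities are combined (apply the quadrature estimate to $T_\delta^2 h$ and absorb $\frac{\delta^2}{6}\|\frac{d^2}{dx^2}T_\delta^2 h\|_{C(\Omega)}$ using the two derivative bounds) to obtain the key estimate $\delta\|\frac{d}{dx}T_\delta^{2}h\|_{C(\Omega)}\le 34\,\|h-T_\delta h\|_{C(\Omega)}$; iterating it $r$ times through Lemmas \ref{deg1} and \ref{da1} yields $\delta^r\|\frac{d^r}{dx^r}T_\delta^{2rl}f\|_{C(\Omega)}\le 34^{r}\|(I-T_\delta)^r f\|_{C(\Omega)}$ for each $l$, hence $\delta^r\|g^{(r)}\|_{C(\Omega)}\le 2^{r}34^{r}\|(I-T_\delta)^r f\|_{C(\Omega)}$. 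Summing the two bounds gives $K_r(f,\delta,C(\Omega))_{C(\Omega)}\le 2^r(r^r+34^r)\|(I-T_\delta)^r f\|_{C(\Omega)}$, which is exactly $c_8(r)$ (and the $r=1$ case gives $2+34=36$). This realization argument is the idea missing from your proposal; without it, or a complete proof of your comparison inequality, the theorem is not established.
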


As a corollary of Theorem \ref{DI} we can state the following result.

\begin{proposition}
\label{DIcor}If $0<h\leq \delta <\infty $ and $f\in C\left( \Omega \right) ,$
then%
\begin{equation}
\left\Vert \left( I-T_{h}\right) f\right\Vert _{C\left( \Omega \right) }\leq
72\left\Vert \left( I-T_{\delta }\right) f\right\Vert _{C\left( \Omega
\right) }.  \label{eq1}
\end{equation}
\end{proposition}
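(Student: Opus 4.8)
The plan is to route the comparison through Peetre's $K$-functional, using the two-sided equivalence (\ref{eqA}) of Theorem \ref{DI} in the case $r=1$, where the constant is $c_{8}\left( 1\right) =36$. Written out at step $\delta $, (\ref{eqA}) states that
\begin{equation*}
\frac{1}{36}K_{1}\left( f,\delta ,C\left( \Omega \right) \right) _{C\left( \Omega \right) }\leq \left\Vert \left( I-T_{\delta }\right) f\right\Vert _{C\left( \Omega \right) }\leq 2K_{1}\left( f,\delta ,C\left( \Omega \right) \right) _{C\left( \Omega \right) },
\end{equation*}
and the same holds verbatim with $\delta $ replaced by the smaller step $h$.

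First I would apply the upper bound in (\ref{eqA}) at step $h$, obtaining $\left\Vert \left( I-T_{h}\right) f\right\Vert _{C\left( \Omega \right) }\leq 2K_{1}\left( f,h,C\left( \Omega \right) \right) _{C\left( \Omega \right) }$. The decisive observation is then that $K_{1}\left( f,\cdot ,C\left( \Omega \right) \right) _{C\left( \Omega \right) }$ is non-decreasing in its second argument: for each fixed $g\in W_{C\left( \Omega \right) }^{1}$ the quantity $\left\Vert f-g\right\Vert _{C\left( \Omega \right) }+t\left\Vert g^{\prime }\right\Vert _{C\left( \Omega \right) }$ is affine and non-decreasing in $t\geq 0$ (its slope $\left\Vert g^{\prime }\right\Vert _{C\left( \Omega \right) }$ being non-negative), and an infimum of non-decreasing functions is non-decreasing. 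Since $h\leq \delta $, this yields $K_{1}\left( f,h,C\left( \Omega \right) \right) _{C\left( \Omega \right) }\leq K_{1}\left( f,\delta ,C\left( \Omega \right) \right) _{C\left( \Omega \right) }$.

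Finally I would invoke the lower bound in (\ref{eqA}) at the larger step $\delta $, namely $K_{1}\left( f,\delta ,C\left( \Omega \right) \right) _{C\left( \Omega \right) }\leq 36\left\Vert \left( I-T_{\delta }\right) f\right\Vert _{C\left( \Omega \right) }$. Chaining the three steps gives
\begin{equation*}
\left\Vert \left( I-T_{h}\right) f\right\Vert _{C\left( \Omega \right) }\leq 2K_{1}\left( f,h,C\left( \Omega \right) \right) _{C\left( \Omega \right) }\leq 2K_{1}\left( f,\delta ,C\left( \Omega \right) \right) _{C\left( \Omega \right) }\leq 72\left\Vert \left( I-T_{\delta }\right) f\right\Vert _{C\left( \Omega \right) },
\end{equation*}
which is exactly (\ref{eq1}), the constant $72=2\cdot 36$ arising as the product of the upper equivalence constant $2$ and the lower equivalence constant $c_{8}\left( 1\right) =36$.

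I expect no genuine obstacle here. The point worth stressing is that the modulus $\left\Vert \left( I-T_{\delta }\right) f\right\Vert _{C\left( \Omega \right) }$, being defined at a single step rather than as a supremum over steps, is not manifestly monotone in $\delta $; what makes the argument work is that the $K$-functional plainly is monotone, and the equivalence (\ref{eqA}) transfers that monotonicity to the modulus at the unavoidable cost of the product of the two equivalence constants. Thus the only step requiring even slight care is the monotonicity of the $K$-functional, and everything else is a direct citation of Theorem \ref{DI}.
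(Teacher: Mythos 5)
Your proof is correct and is essentially identical to the paper's own argument: the paper likewise chains $\left\Vert \left( I-T_{h}\right) f\right\Vert _{C\left( \Omega \right) }\leq 2K_{1}\left( f,h,C\left( \Omega \right) \right) _{C\left( \Omega \right) }\leq 2K_{1}\left( f,\delta ,C\left( \Omega \right) \right) _{C\left( \Omega \right) }\leq 72\left\Vert \left( I-T_{\delta }\right) f\right\Vert _{C\left( \Omega \right) }$ using Theorem \ref{DI} with $r=1$. The only difference is that you spell out the monotonicity of the $K$-functional in its second argument, which the paper uses silently.
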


As a corollary of (\ref{eqA}) and Lemma \ref{Ord} we can write

\begin{corollary}
\label{MMM}Let $\Omega \subseteq \boldsymbol{R}$ be a measurable set, $%
\delta >0$, $f\in C\left( \Omega \right) $ and $r\in \mathrm{N}$. Then,

(i) there holds%
\begin{equation*}
1+2^{-r}r^{r}\leq \frac{\underset{\left\vert h\right\vert \leq \delta }{\sup 
}\left\Vert \left( I-\tilde{T}_{h}\right) ^{r}f\right\Vert _{C\left( \Omega
\right) }}{\left\Vert \left( I-T_{\delta }\right) ^{r}f\right\Vert _{C\left(
\Omega \right) }}\leq 2^{r}c_{8}\left( r\right) ,
\end{equation*}

(ii) for $0<\delta _{1}\leq \delta _{2}$, there holds%
\begin{equation*}
\left( 1+2^{-r}r^{r}\right) \Omega _{r}\left( f,\delta _{1}\right) _{C\left(
\Omega \right) }\leq c_{8}\left( r\right) 2^{r}\Omega _{r}\left( f,\delta
_{2}\right) _{C\left( \Omega \right) }.
\end{equation*}
\end{corollary}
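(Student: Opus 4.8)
The plan is to read Corollary~\ref{MMM} as what results from eliminating the $K$-functional between the two two-sided equivalences already established: Lemma~\ref{Ord}, which compares the translation modulus with $K_r(f,\delta,C(\Omega))_{C(\Omega)}$, and the chain~(\ref{eqA}), which compares the Steklov modulus $\Vert(I-T_\delta)^r f\Vert_{C(\Omega)}$ with the same $K$-functional. Throughout I abbreviate $S(\delta):=\sup_{|h|\le\delta}\Vert(I-\tilde T_h)^r f\Vert_{C(\Omega)}$, $M(\delta):=\Vert(I-T_\delta)^r f\Vert_{C(\Omega)}=\Omega_r(f,\delta)_{C(\Omega)}$, and $K(\delta):=K_r(f,\delta,C(\Omega))_{C(\Omega)}$. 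A preliminary remark is that the symbol $L_p(\Omega)$ appearing inside the $K$-functional in the statement of Lemma~\ref{Ord} must be read as $C(\Omega)$, so that it is literally the quantity occurring in~(\ref{eqA}); reconciling these notations legitimizes the elimination step.

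For the upper bound in part~(i) I would simply chain the upper half of Lemma~\ref{Ord}, namely $S(\delta)\le 2^r K(\delta)$, with the lower half of~(\ref{eqA}) rewritten as $K(\delta)\le c_8(r)M(\delta)$; this gives $S(\delta)\le 2^r c_8(r)M(\delta)$, i.e. the stated bound $S(\delta)/M(\delta)\le 2^r c_8(r)$. This direction is routine.

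The lower bound in part~(i) is the crux. The naive route chains the lower half of Lemma~\ref{Ord}, $S(\delta)\ge(r^r+2^r)^{-1}K(\delta)$, with the upper half of~(\ref{eqA}) in the form $K(\delta)\ge 2^{-r}M(\delta)$, but a double passage through the $K$-functional is lossy and yields only a small constant, not the sharp $1+2^{-r}r^r=(r^r+2^r)2^{-r}$. To recover the sharp constant I would instead compare $M(\delta)$ and $S(\delta)$ directly, exploiting that $T_\delta$ is the mean of the translations $\tilde T_t$ over $t\in[0,\delta]$: since these translations commute, $I-T_\delta=\frac1\delta\int_0^\delta(I-\tilde T_t)\,dt$, so $(I-T_\delta)^r$ is an average over $[0,\delta]^r$ of products of first differences, and estimating the $C(\Omega)$-norm of such mixed differences against $S(\delta)$ is where the exact constant has to be extracted. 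Getting this constant right, rather than a crude majorant, is the step I expect to require the most care.

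For part~(ii) I would use monotonicity in the scale parameter: $S(\delta)$ is non-decreasing because its defining supremum is taken over the nested sets $\{|h|\le\delta\}$ (equivalently $K(\delta)$ is non-decreasing, each summand in its infimum being non-decreasing in $\delta$). Then for $0<\delta_1\le\delta_2$ I apply the lower bound of part~(i) at $\delta_1$, monotonicity, and the upper bound of part~(i) at $\delta_2$ to obtain
\[
(1+2^{-r}r^r)\,M(\delta_1)\le S(\delta_1)\le S(\delta_2)\le 2^r c_8(r)\,M(\delta_2),
\]
which is precisely assertion~(ii) after rewriting $M(\delta_i)=\Omega_r(f,\delta_i)_{C(\Omega)}$. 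Thus, once part~(i) is in hand, part~(ii) is immediate, and the only genuinely delicate point is the sharp lower constant discussed above.
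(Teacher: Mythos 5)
You are right about one important thing, and it is worth saying clearly: the paper's own ``proof'' of Corollary \ref{MMM} is exactly the naive route you describe (the corollary is announced as an immediate consequence of Lemma \ref{Ord} and (\ref{eqA}), with no further argument), and, as you compute, that chaining gives $S(\delta)\geq \frac{1}{2^{r}\left( r^{r}+2^{r}\right) }M(\delta)$, not $S(\delta)\geq \left( 1+2^{-r}r^{r}\right) M(\delta)$; your upper bound $S(\delta)\leq 2^{r}c_{8}\left( r\right) M(\delta)$ and your reduction of (ii) to (i) via monotonicity are also correct. The genuine gap is that your proposal never proves the lower bound in (i): the ``crux'' step is explicitly deferred (``the step I expect to require the most care''), so as a proof it is incomplete precisely at the only point where it differs from the paper. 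Worse, the repair you sketch cannot work in principle. Writing $\left( I-T_{\delta }\right) ^{r}f=\delta ^{-r}\int_{[0,\delta ]^{r}}\prod_{i=1}^{r}\left( I-\tilde{T}_{t_{i}}\right) f\,dt_{1}\cdots dt_{r}$ and estimating the integrand gives at best $M(\delta)\leq \sup_{t_{1},\dots ,t_{r}\in \lbrack 0,\delta ]}\Vert \prod_{i}\left( I-\tilde{T}_{t_{i}}\right) f\Vert _{C\left( \Omega \right) }$, and this supremum is already $\geq S(\delta)$ (take $t_{1}=\cdots =t_{r}=h$ with $h$ near-optimal). So no bound on the integrand can ever produce a factor larger than $1$; only cancellation between different $\left( t_{1},\dots ,t_{r}\right)$ could, and you have no mechanism for that.

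In fact no argument can close the gap, because the stated lower bound is false. Take $r=1$, $\Omega =\boldsymbol{R}$, fix $\delta >0$, and let $f_{\varepsilon }$ be the tent function of height $1$ supported on $[-\varepsilon ,\varepsilon ]$ with $0<\varepsilon \ll \delta $. Since $0\leq f_{\varepsilon }\leq 1$, and $f_{\varepsilon }$ and $f_{\varepsilon }\left( \cdot +h\right) $ have disjoint supports for $\left\vert h\right\vert >2\varepsilon $, one gets $S(\delta )=\sup_{\left\vert h\right\vert \leq \delta }\Vert f_{\varepsilon }-\tilde{T}_{h}f_{\varepsilon }\Vert _{C\left( \boldsymbol{R}\right) }=1$, while $M(\delta )=\Vert f_{\varepsilon }-T_{\delta }f_{\varepsilon }\Vert _{C\left( \boldsymbol{R}\right) }\geq f_{\varepsilon }\left( 0\right) -\frac{1}{\delta }\int_{0}^{\delta }f_{\varepsilon }\left( t\right) dt=1-\frac{\varepsilon }{2\delta }$. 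Hence $S(\delta )/M(\delta )\leq \left( 1-\varepsilon /(2\delta )\right) ^{-1}\rightarrow 1$, which is below the claimed constant $1+2^{-1}1^{1}=3/2$. What can actually be salvaged from the paper's results is $\frac{1}{2^{r}\left( r^{r}+2^{r}\right) }\leq S(\delta )/M(\delta )\leq 2^{r}c_{8}\left( r\right) $, and for (ii) the clean statement $\Omega _{r}\left( f,\delta _{1}\right) _{C\left( \Omega \right) }\leq 2^{r}c_{8}\left( r\right) \Omega _{r}\left( f,\delta _{2}\right) _{C\left( \Omega \right) }$, obtained from $M(\delta _{1})\leq 2^{r}K(\delta _{1})\leq 2^{r}K(\delta _{2})\leq 2^{r}c_{8}\left( r\right) M(\delta _{2})$ using monotonicity of the $K$-functional. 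So the correct conclusion from your (accurate) observation is not that a sharper proof is needed, but that the constant $1+2^{-r}r^{r}$ in the corollary is itself erroneous and must be corrected.
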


\begin{remark}
From Theorem 23.62 of \cite[p.579]{yeh} we have%
\begin{equation}
\lim\limits_{\delta \searrow 0}\Omega _{1}(f,\delta )_{C\left( \boldsymbol{R}%
\right) }=\lim\limits_{\delta \searrow 0}\left\Vert \left( I-T_{\delta
}\right) f\right\Vert _{C\left( \boldsymbol{R}\right) }=0.  \label{sut}
\end{equation}
\end{remark}

\begin{corollary}
\label{yksmk} If $f\in C\left( \boldsymbol{R}\right) $, $0<\delta <\infty $,
and $r\in \mathrm{N}$, then, by (\ref{sut}) and (***),%
\begin{equation*}
\lim\limits_{\delta \searrow 0}\Omega _{r}(f,\delta )_{C\left( \boldsymbol{R}%
\right) }=\lim\limits_{\delta \searrow 0}\left\Vert \left( I-T_{\delta
}\right) ^{r}f\right\Vert _{C\left( \boldsymbol{R}\right) }=0
\end{equation*}%
holds.
\end{corollary}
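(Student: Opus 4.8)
The plan is to reduce the general-$r$ case to the case $r=1$ recorded in (\ref{sut}), using nothing more than the chain of inequalities (***) from Theorem \ref{xv} together with a squeeze argument; the corollary's own statement already advertises this route.

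First I would specialize $\Omega=\boldsymbol{R}$ in part (4) of Theorem \ref{xv}, i.e.\ in (***), and retain only the outermost terms of the chain to get the single estimate
\begin{equation*}
0\leq \Omega_r(f,\delta)_{C(\boldsymbol{R})}\leq 2^{r-1}\,\Omega_1(f,\delta)_{C(\boldsymbol{R})},
\end{equation*}
valid for every $\delta$, where the left inequality is part (1) of Theorem \ref{xv} (nonnegativity) and the factor $2^{r-1}$ is a constant independent of $\delta$.

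Next I would invoke (\ref{sut}), which states that $\Omega_1(f,\delta)_{C(\boldsymbol{R})}=\left\Vert (I-T_\delta)f\right\Vert_{C(\boldsymbol{R})}\to 0$ as $\delta\searrow 0$. Multiplying a null limit by the fixed constant $2^{r-1}$ leaves it a null limit, so the upper bound $2^{r-1}\Omega_1(f,\delta)_{C(\boldsymbol{R})}$ tends to $0$. Since the displayed double inequality traps $\Omega_r(f,\delta)_{C(\boldsymbol{R})}$ between $0$ and this null quantity, the squeeze theorem gives $\lim_{\delta\searrow 0}\Omega_r(f,\delta)_{C(\boldsymbol{R})}=0$, which is exactly the asserted identity after recalling $\Omega_r(f,\delta)_{C(\boldsymbol{R})}=\left\Vert (I-T_\delta)^r f\right\Vert_{C(\boldsymbol{R})}$.

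There is no genuine obstacle at this stage: the entire analytic content has already been absorbed into (\ref{sut}) (which in turn rests on the uniform continuity of $f$ and Theorem 23.62 of the cited reference) and into the monotone doubling chain (***). The proof itself is therefore a one-line squeeze, and the only point deserving a word of care is to note that $2^{r-1}$ does not depend on $\delta$, so that passing to the limit is legitimate.
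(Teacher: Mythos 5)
Your proposal is correct and is exactly the argument the paper intends: the corollary's statement itself cites (\ref{sut}) and (***), and the paper's implicit proof is precisely the combination $0\leq \Omega_r(f,\delta)_{C(\boldsymbol{R})}\leq 2^{r-1}\Omega_1(f,\delta)_{C(\boldsymbol{R})}\to 0$ that you spell out via the squeeze argument. Your explicit note that the constant $2^{r-1}$ is independent of $\delta$ is the only detail worth recording, and it is handled correctly.
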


Let $\mathcal{G}_{\sigma }\left( X\right) $ be the subspace of entire
function of exponential type $\sigma $ that belonging to a Banach space $X$.
The quantity%
\begin{equation}
A_{\sigma }(f)_{X}:=\inf\limits_{g}\{\Vert f-g\Vert _{X}:g\in \mathcal{G}%
_{\sigma }\left( X\right) \}  \label{fd}
\end{equation}%
is called the deviation of the function $f\in X$ from $\mathcal{G}_{\sigma
}\left( X\right) $.

Let $\mathcal{G}_{\sigma ,p(\cdot )}:=\mathcal{G}_{\sigma }\left( L_{p(\cdot
)}\right) $ be the subspace of integral function $f$ of exponential type $%
\sigma $ that belonging to $L_{p(\cdot )}$. The quantity%
\begin{equation*}
A_{\sigma }(f)_{p(\cdot )}:=\inf\limits_{g}\{\Vert f-g\Vert _{p(\cdot
)}:g\in \mathcal{G}_{\sigma ,p(\cdot )}\}
\end{equation*}%
is the deviation of the function $f\in L_{p(\cdot )}$ from $\mathcal{G}%
_{\sigma }$.

\begin{remark}
\label{rmrk} Let $\sigma >0$, $1\leq p\leq \infty $, $f\in L_{p}\left( 
\boldsymbol{R}\right) $,%
\begin{equation*}
\vartheta \left( x\right) :=\frac{2}{\pi }\frac{\sin \left( x/2\right) \sin
(3x/2)}{x^{2}}
\end{equation*}%
and%
\begin{equation*}
J\left( f,\sigma \right) =\sigma \int\nolimits_{\boldsymbol{R}}f\left(
x-u\right) \vartheta \left( \sigma u\right) du
\end{equation*}%
be the dela Val\`{e}e Poussin operator (\cite[definition given in (5.3)]%
{bbsv06}). It is known (see (5.4)-(5.5) of \cite{bbsv06}) that, if $f\in
L_{p}\left( \boldsymbol{R}\right) $, $1\leq p\leq \infty $, then,

(i) $J\left( f,\sigma \right) \in \mathcal{G}_{2\sigma }\left( L_{p}\left( 
\boldsymbol{R}\right) \right) $,

(ii) $J\left( g_{\sigma },\sigma \right) =g_{\sigma }$ for any $g_{\sigma
}\in \mathcal{G}_{\sigma }\left( L_{p}\left( \boldsymbol{R}\right) \right) $,

(iii) $\Vert J\left( f,\sigma \right) \Vert _{L_{p}\left( \boldsymbol{R}%
\right) }\leq \frac{3}{2}\Vert f\Vert _{L_{p}\left( \boldsymbol{R}\right) }$.

(iv) $\left( J\left( f,\sigma \right) \right) ^{\left( r\right) }=J\left(
f^{\left( r\right) },\sigma \right) $ for any $r\in \mathrm{N}$ and $f\in
\left( L_{p}\left( \boldsymbol{R}\right) \right) ^{r}$.

(v) $\Vert J\left( f,\frac{\sigma }{2}\right) -f\Vert _{L_{p}\left( 
\boldsymbol{R}\right) }\rightarrow 0$ (as $\sigma \rightarrow \infty $) and
hence%
\begin{equation*}
\Vert \left( J\left( f,\frac{\sigma }{2}\right) \right) ^{\left( k\right)
}-f^{\left( k\right) }\Vert _{L_{p}\left( \boldsymbol{R}\right) }\rightarrow
0\text{ as }\sigma \rightarrow \infty ,
\end{equation*}%
for $f\in W_{L_{p}\left( \boldsymbol{R}\right) }^{r}$ and $1\leq k\leq r$.
\end{remark}

\begin{corollary}
Let $0<\sigma <\infty $.

(i) If $1\leq p\leq \infty $, $f\in L_{p}\left( \boldsymbol{R}\right) $.
Then, using \textit{(v)} of the last remark, we conclude%
\begin{equation*}
\lim\limits_{\sigma \rightarrow \infty }A_{\sigma }(f)_{L_{p}\left( 
\boldsymbol{R}\right) }=0.
\end{equation*}

(ii) Let $g:\boldsymbol{R}\rightarrow \mathbb{C}$ be bounded on the real
axis $\boldsymbol{R}$. Then (see \cite{B46})%
\begin{equation*}
\lim\limits_{\sigma \rightarrow \infty }A_{\sigma }(g)_{C\left( \boldsymbol{R%
}\right) }=0
\end{equation*}%
if and only if $g$ is uniformly continuous on $\boldsymbol{R}$.
\end{corollary}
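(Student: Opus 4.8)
The plan is to handle the two parts separately: part (i) is an immediate consequence of the de la Vall\'ee Poussin machinery recorded in Remark \ref{rmrk}, while part (ii) rests on the classical interplay between the approximate-identity property of that operator and Bernstein's inequality. For part (i), I would simply produce a competitor for the infimum defining $A_{\sigma}(f)_{L_{p}(\boldsymbol{R})}$. By item (i) of Remark \ref{rmrk} the function $J(f,\sigma/2)$ is entire of exponential type $2\cdot(\sigma/2)=\sigma$ and belongs to $L_{p}(\boldsymbol{R})$, so $J(f,\sigma/2)\in\mathcal{G}_{\sigma}(L_{p}(\boldsymbol{R}))$. Hence $A_{\sigma}(f)_{L_{p}(\boldsymbol{R})}\leq\|f-J(f,\sigma/2)\|_{L_{p}(\boldsymbol{R})}$, and the right-hand side tends to $0$ as $\sigma\to\infty$ by item (v) of the same remark; nothing further is required.

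For the ``if'' direction of part (ii), assume $g$ is bounded and uniformly continuous. I would again take $J(g,\sigma/2)$, which is entire of exponential type $\sigma$ and, being the convolution of the bounded $g$ with the integrable kernel $K_{\sigma}(u):=(\sigma/2)\vartheta((\sigma/2)u)$, is bounded; thus $J(g,\sigma/2)\in\mathcal{G}_{\sigma}(C(\boldsymbol{R}))$. The kernel $K_{\sigma}$ is an approximate identity: it has integral $1$ (which follows from the reproducing property (ii) applied to the constant function, or by a direct computation) and its mass concentrates at the origin as $\sigma\to\infty$. For bounded uniformly continuous $g$ the standard approximate-identity estimate --- splitting $g\ast K_{\sigma}-g$ into the regions $|u|<\eta$ and $|u|\geq\eta$ and using uniform continuity on the first and the tail bound on the second --- yields $\|g-J(g,\sigma/2)\|_{C(\boldsymbol{R})}\to0$, whence $A_{\sigma}(g)_{C(\boldsymbol{R})}\leq\|g-J(g,\sigma/2)\|_{C(\boldsymbol{R})}\to0$.

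For the ``only if'' direction, suppose $A_{\sigma}(g)_{C(\boldsymbol{R})}\to0$. I would pick $\sigma_{n}\to\infty$ together with near-best approximants $h_{n}\in\mathcal{G}_{\sigma_{n}}(C(\boldsymbol{R}))$ satisfying $\|g-h_{n}\|_{C(\boldsymbol{R})}\to0$. Each $h_{n}$ is a bounded entire function of exponential type $\sigma_{n}$, so Bernstein's inequality gives $\|h_{n}'\|_{C(\boldsymbol{R})}\leq\sigma_{n}\|h_{n}\|_{C(\boldsymbol{R})}<\infty$; in particular every $h_{n}$ is Lipschitz and therefore uniformly continuous. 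Since a uniform limit of uniformly continuous functions is uniformly continuous, $g$ is uniformly continuous.

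I expect the only genuinely delicate point to be the passage from mere $L_{p}$-convergence (automatic in part (i)) to uniform convergence in part (ii). The uniform continuity hypothesis is exactly what makes the approximate-identity estimate uniform in the ``if'' direction, while Bernstein's inequality is the device that converts the boundedness of the type-$\sigma$ approximants into the equicontinuity needed for the ``only if'' direction. Both ingredients are classical, so the proof is chiefly a matter of assembling them in the right order.
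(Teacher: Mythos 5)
Your proof is correct, but it does more than the paper does. For part (i) your argument coincides with the paper's: the paper's whole justification is the phrase ``using (v) of the last remark,'' i.e., take $J(f,\sigma/2)$ --- which lies in $\mathcal{G}_{\sigma}\left( L_{p}\left( \boldsymbol{R}\right) \right)$ by item (i) of Remark \ref{rmrk} --- as a competitor in the infimum defining $A_{\sigma }(f)_{L_{p}\left( \boldsymbol{R}\right) }$ and invoke item (v). For part (ii), however, the paper gives no proof at all: it simply cites Bernstein's 1946 result \cite{B46}. You supply a complete, self-contained argument, and both halves are sound: in the ``if'' direction, the normalization $\int_{\boldsymbol{R}}\vartheta \left( u\right) du=1$ does follow from the reproducing property (ii) applied to a constant function (constants have exponential type $0$ and lie in $L_{\infty }$), the tail control comes from $\vartheta \left( u\right) =O\left( u^{-2}\right) $, and the standard approximate-identity splitting then uses uniform continuity exactly where it is needed; in the ``only if'' direction, Bernstein's inequality $\left\Vert h^{\prime }\right\Vert _{C\left( \boldsymbol{R}\right) }\leq \sigma \left\Vert h\right\Vert _{C\left( \boldsymbol{R}\right) }$ for bounded entire functions of exponential type $\sigma $ makes each near-best approximant Lipschitz, and a uniform limit of uniformly continuous functions is uniformly continuous. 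What your route buys is that the corollary no longer rests on an external reference; what the paper's citation buys is brevity, since part (ii) is classical. One caveat you inherit from the paper rather than create: for $p=\infty $, Remark \ref{rmrk}(v) (hence part (i)) can only hold if $L_{\infty }$ is read as the class of bounded \emph{uniformly continuous} functions --- otherwise uniform convergence of the continuous functions $J\left( f,\sigma /2\right) $ to $f$ would force uniform continuity of $f$, which is precisely the content of part (ii); since you apply the remark exactly as stated, just as the paper does, this is not a gap in your argument.
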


\begin{theorem}
\label{jksn} Let $r\in \mathrm{N}$, $\sigma >0$, $\delta \in (0,1)$ and $%
f\in \mathcal{C}(\boldsymbol{R})$. Then, the following Jackson type
inequality%
\begin{equation}
A_{\sigma }\left( f\right) _{\mathcal{C}(\boldsymbol{R})}\leq 5\pi
4^{r-1}c_{8}\left( r\right) \Omega _{r}\left( f,1/\sigma \right) _{\mathcal{C%
}(\boldsymbol{R})}\text{, and}  \label{jaksn}
\end{equation}%
its weak inverse%
\begin{equation}
\Omega _{r}\left( f,\delta \right) _{\mathcal{C}(\boldsymbol{R})}\leq \left(
1+2^{2r-1}\right) 2^{r-1}\delta ^{r}\left( A_{0}\left( f\right) _{\mathcal{C}%
(\boldsymbol{R})}+\int\nolimits_{1/2}^{1/\delta }u^{r-1}A_{u}\left( f\right)
_{\mathcal{C}(\boldsymbol{R})}du\right)  \label{cvrs}
\end{equation}%
are hold$.$
\end{theorem}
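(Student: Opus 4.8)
The plan is to prove the direct inequality (\ref{jaksn}) by combining the de la Vall\'{e}e Poussin operator of Remark \ref{rmrk} with the $K$-functional equivalence (\ref{eqA}), and to prove the inverse inequality (\ref{cvrs}) by a dyadic telescoping of best approximants together with Bernstein's inequality for entire functions of exponential type.

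For the direct part, since $J(f,\sigma/2)\in\mathcal{G}_{\sigma}(\mathcal{C}(\boldsymbol{R}))$ by (i) of Remark \ref{rmrk}, one has $A_{\sigma}(f)_{\mathcal{C}(\boldsymbol{R})}\leq\Vert f-J(f,\sigma/2)\Vert_{\mathcal{C}(\boldsymbol{R})}$. For an arbitrary $g\in W^{r}_{\mathcal{C}(\boldsymbol{R})}$ I would insert $g$ and use linearity of $J$ together with the norm bound (iii), $\Vert J(\cdot,\sigma/2)\Vert\leq 3/2$, to obtain
\begin{equation*}
\Vert f-J(f,\sigma/2)\Vert_{\mathcal{C}(\boldsymbol{R})}\leq\tfrac{5}{2}\Vert f-g\Vert_{\mathcal{C}(\boldsymbol{R})}+\Vert g-J(g,\sigma/2)\Vert_{\mathcal{C}(\boldsymbol{R})}.
\end{equation*}
The decisive step is the smooth estimate $\Vert g-J(g,\sigma/2)\Vert_{\mathcal{C}(\boldsymbol{R})}\leq C_{r}\,\sigma^{-r}\Vert g^{(r)}\Vert_{\mathcal{C}(\boldsymbol{R})}$. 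Using property (iv), the $r$-th derivative of the error $g-J(g,\sigma/2)$ equals $g^{(r)}-J(g^{(r)},\sigma/2)$, and since $J(\cdot,\sigma/2)$ reproduces $\mathcal{G}_{\sigma/2}$ by (ii) the error is a high-pass object; representing it as the convolution of $g^{(r)}$ against the kernel whose Fourier multiplier is $(1-\widehat{\vartheta}(2\xi/\sigma))(i\xi)^{-r}$ and bounding the $L^{1}$-norm of that kernel produces $C_{r}$, the factor $\pi$ entering through the kernel. Taking the infimum over $g\in W^{r}_{\mathcal{C}(\boldsymbol{R})}$ and accounting for the constants in the two terms yields $A_{\sigma}(f)_{\mathcal{C}(\boldsymbol{R})}\leq 5\pi4^{r-1}K_{r}(f,1/\sigma,\mathcal{C}(\boldsymbol{R}))_{\mathcal{C}(\boldsymbol{R})}$, and the left inequality of (\ref{eqA}), namely $K_{r}\leq c_{8}(r)\,\Omega_{r}$, gives (\ref{jaksn}).

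For the inverse part, let $Q_{2^{\nu}}\in\mathcal{G}_{2^{\nu}}(\mathcal{C}(\boldsymbol{R}))$ be best approximants, $\Vert f-Q_{2^{\nu}}\Vert_{\mathcal{C}(\boldsymbol{R})}=A_{2^{\nu}}(f)_{\mathcal{C}(\boldsymbol{R})}$; these exist and $A_{2^{\nu}}\to0$ because $f$ is uniformly continuous. I would telescope $f=Q_{1}+\sum_{\nu\geq0}(Q_{2^{\nu+1}}-Q_{2^{\nu}})$, observe that $\Vert Q_{2^{\nu+1}}-Q_{2^{\nu}}\Vert_{\mathcal{C}(\boldsymbol{R})}\leq2A_{2^{\nu}}(f)_{\mathcal{C}(\boldsymbol{R})}$ with $Q_{2^{\nu+1}}-Q_{2^{\nu}}\in\mathcal{G}_{2^{\nu+1}}(\mathcal{C}(\boldsymbol{R}))$, and apply subadditivity of $\Omega_{r}$ from Theorem \ref{xv}(2). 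For the blocks with $2^{\nu}\leq1/\delta$ I would use Theorem \ref{xv}(5), $\Omega_{r}(h,\delta)\leq2^{-r}\delta^{r}\Vert h^{(r)}\Vert$, combined with the classical Bernstein inequality $\Vert h^{(r)}\Vert_{\mathcal{C}(\boldsymbol{R})}\leq\sigma^{r}\Vert h\Vert_{\mathcal{C}(\boldsymbol{R})}$ for $h\in\mathcal{G}_{\sigma}(\mathcal{C}(\boldsymbol{R}))$; for the blocks with $2^{\nu}>1/\delta$ I would use Theorem \ref{xv}(4), $\Omega_{r}(h,\delta)\leq2^{r}\Vert h\Vert$. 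The low-frequency sum $\delta^{r}\sum_{2^{\nu}\leq1/\delta}2^{\nu r}A_{2^{\nu}}(f)$ is comparable, by monotonicity of $u\mapsto A_{u}(f)$, to $\delta^{r}\int_{1/2}^{1/\delta}u^{r-1}A_{u}(f)\,du$, the high-frequency sum is absorbed into the same integral, and the lowest block together with the choice of lower limit $1/2$ produces the $A_{0}(f)$ contribution; assembling the constants yields $(1+2^{2r-1})2^{r-1}$ as in (\ref{cvrs}).

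The main obstacle is the smooth estimate in the direct part: because $\vartheta(u)$ decays only like $u^{-2}$, every moment $\int|u|^{k}|\vartheta(u)|\,du$ with $k\geq1$ diverges, so the gain $\sigma^{-r}$ cannot be extracted by a Taylor expansion of $g$ against the kernel; it must instead come from the vanishing of the multiplier $1-\widehat{\vartheta}(2\xi/\sigma)$ on $|\xi|\leq\sigma/2$, that is, from cancellation in the kernel, and controlling its $L^{1}$-norm with an explicit constant (the source of the $\pi$) is the technical heart of the argument. In the inverse part the only delicate point is the clean passage from the dyadic sum to the integral $\int_{1/2}^{1/\delta}u^{r-1}A_{u}(f)\,du$ with the precise constant, which is routine once Bernstein's inequality is in hand.
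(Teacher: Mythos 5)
Your direct-part skeleton (insert an arbitrary $g\in W_{\mathcal{C}(\boldsymbol{R})}^{r}$, pass to the $K$-functional $K_{r}(f,1/\sigma,\mathcal{C}(\boldsymbol{R}))_{\mathcal{C}(\boldsymbol{R})}$, then apply the left inequality of (\ref{eqA})) is the same as the paper's, but you have pushed all the substance into the estimate
\begin{equation*}
\left\Vert g-J\left( g,\tfrac{\sigma }{2}\right) \right\Vert _{\mathcal{C}(\boldsymbol{R})}\leq C_{r}\,\sigma ^{-r}\left\Vert g^{\left( r\right) }\right\Vert _{\mathcal{C}(\boldsymbol{R})},
\end{equation*}
which you then do not prove: the multiplier representation and the $L^{1}$ bound on the kernel are only announced, and you yourself flag them as the main obstacle (correctly -- the moments of $\vartheta $ diverge, so cancellation is essential). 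This estimate is exactly the known Jackson inequality for smooth functions, $A_{\sigma }(g)_{\mathcal{C}(\boldsymbol{R})}\leq \frac{5\pi }{4}\frac{4^{r}}{\sigma ^{r}}\Vert g^{(r)}\Vert _{\mathcal{C}(\boldsymbol{R})}$, which the paper does not prove either: it cites it from \cite[Proposition 25]{bbsv06} (restated as Theorem \ref{bbsv}(a)(i)) and then runs precisely your $K$-functional argument, but with coefficient $1$ on $\Vert f-g\Vert _{\mathcal{C}(\boldsymbol{R})}$ via $A_{\sigma }(f)\leq A_{\sigma }(f-g)+A_{\sigma }(g)$ and $A_{\sigma }(f-g)\leq \Vert f-g\Vert _{\mathcal{C}(\boldsymbol{R})}$, rather than your factor $5/2$, which also leaves your claimed final constant $5\pi 4^{r-1}c_{8}(r)$ unverified. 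So the direct part is incomplete as written, though it is repairable simply by citing the known result instead of re-deriving it.

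The inverse part contains a genuine error. You telescope $f=Q_{1}+\sum_{\nu \geq 0}(Q_{2^{\nu +1}}-Q_{2^{\nu }})$ over \emph{all} $\nu $ and estimate each high-frequency block ($2^{\nu }>1/\delta $) by Theorem \ref{xv}(4), which produces the series $2^{r+1}\sum_{2^{\nu }>1/\delta }A_{2^{\nu }}(f)_{\mathcal{C}(\boldsymbol{R})}$. This series cannot be ``absorbed into the same integral'': for $f\in \mathcal{C}(\boldsymbol{R})$ one only knows $A_{u}(f)_{\mathcal{C}(\boldsymbol{R})}\rightarrow 0$, not any summability, so the series may diverge (e.g.\ when $A_{u}(f)_{\mathcal{C}(\boldsymbol{R})}\asymp 1/\log (e+u)$, a rate that is attainable), while the right-hand side of (\ref{cvrs}) is finite; your chain of inequalities then says nothing. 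The paper never creates an infinite tail: choosing $m$ with $2^{m-1}\leq 1/\delta <2^{m}$, it splits $f=(f-g_{2^{m}})+g_{2^{m}}$, bounds the \emph{single} remainder term by Theorem \ref{xv}(4), $\Omega _{r}(f-g_{2^{m}},\delta )_{\mathcal{C}(\boldsymbol{R})}\leq 2^{r}A_{2^{m}}(f)_{\mathcal{C}(\boldsymbol{R})}$, and applies Theorem \ref{xv}(5) together with Bernstein's inequality only to the finite telescoping
\begin{equation*}
g_{2^{m}}^{\left( r\right) }=\sum_{\gamma =1}^{m}\left( g_{2^{\gamma }}^{\left( r\right) }-g_{2^{\gamma -1}}^{\left( r\right) }\right) +\left( g_{1}^{\left( r\right) }-g_{0}^{\left( r\right) }\right) ;
\end{equation*}
the one leftover term $2^{r}A_{2^{m}}(f)_{\mathcal{C}(\boldsymbol{R})}$ is then absorbed into the last dyadic block of $\int_{1/2}^{1/\delta }u^{r-1}A_{u}(f)_{\mathcal{C}(\boldsymbol{R})}du$ using monotonicity of $A_{u}(f)_{\mathcal{C}(\boldsymbol{R})}$ -- something that works for one term but not for your infinite tail. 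Your low-frequency computation is essentially this finite sum, so the fix is to truncate your telescoping at $2^{m}\approx 1/\delta $ and treat everything above that frequency as the single term $f-Q_{2^{m}}$.
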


We set $\lfloor \sigma \rfloor :=\max \left\{ n\in \mathbb{Z}:n\leq \sigma
\right\} $.

\begin{theorem}
\label{bbsv} Let $r\in \mathrm{N}$, $f\in X_{\mathcal{C}(\boldsymbol{R}%
)}^{r} $ and $\sigma >0$. Then,

(a) (i) there exists (see \cite[Proposition 25]{bbsv06}) a $g_{\sigma }\in 
\mathcal{G}_{\sigma }\left( \mathcal{C}(\boldsymbol{R})\right) $ such that%
\begin{equation*}
A_{\sigma }\left( f\right) _{\mathcal{C}(\boldsymbol{R})}\leq \left\Vert
f-g_{\sigma }\right\Vert _{\mathcal{C}(\boldsymbol{R})}\leq \frac{5\pi }{4}%
\frac{4^{r}}{\sigma ^{r}}\Vert f^{\left( r\right) }\Vert _{\mathcal{C}(%
\boldsymbol{R})}\text{, and}
\end{equation*}

(ii) its weak inverse%
\begin{equation*}
\Vert f^{\left( k\right) }\Vert _{\mathcal{C}(\boldsymbol{R})}\leq \left(
1+2^{2k-1}\right) 2^{k+2}\pi ^{k}c_{8}\left( k\right) \sum\limits_{\nu
=0}^{\infty }\frac{(\nu +1)^{r}}{\nu +1}A_{\nu }\left( f\right) _{\mathcal{C}%
(\boldsymbol{R})}\text{,}
\end{equation*}%
holds whenever $k=1,2,\cdots ,r$ and $\sum\nolimits_{\nu =0}^{\infty }(\nu
+1)^{r-1}A_{\nu }\left( f\right) _{C\left( \boldsymbol{R}\right) }<\infty $.

(b) (i) the following inequality (see \cite[p.397]{II3})%
\begin{equation*}
A_{\sigma }\left( f\right) _{\mathcal{C}(\boldsymbol{R})}\leq \frac{\left(
5\pi \right) ^{r}}{\sigma ^{r}}A_{\sigma }\left( f^{\left( r\right) }\right)
_{\mathcal{C}(\boldsymbol{R})}\text{, and}
\end{equation*}

(ii) its weak inverse%
\begin{equation*}
A_{\sigma }\left( f^{\left( r\right) }\right) _{\mathcal{C}(\boldsymbol{R}%
)}\leq \left\Vert f^{\left( r\right) }-\left( J\left( f^{\left( r\right) },%
\frac{\sigma }{2}\right) \right) \right\Vert _{\mathcal{C}(\boldsymbol{R}%
)}\leq
\end{equation*}%
\begin{equation*}
\leq \left( 1+2^{2r-1}\right) 2^{r+2}\pi ^{r}c_{8}\left( r\right) \left(
A_{\sigma }\left( f\right) _{\mathcal{C}(\boldsymbol{R})}\sum\limits_{k=0}^{%
\lfloor \sigma \rfloor }\frac{k^{r}}{k}+\sum\limits_{\nu =\lfloor \sigma
\rfloor +1}^{\infty }\frac{\left( \nu +1\right) ^{r}}{\nu +1}A_{\nu }\left(
f\right) _{\mathcal{C}(\boldsymbol{R})}\right)
\end{equation*}%
hold when $\sum\nolimits_{\nu =0}^{\infty }(\nu +1)^{r-1}A_{\nu }\left(
f\right) _{\mathcal{C}(\boldsymbol{R})}<\infty .$
\end{theorem}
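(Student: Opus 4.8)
The plan is to treat the four assertions in two groups: the smoothing estimates (a)(i) and (b)(i), which are essentially quotations of known results adapted to the homogeneous Banach space $\mathcal{C}(\boldsymbol{R})$, and the two weak inverses (a)(ii) and (b)(ii), which carry the real work. For (a)(i) the left inequality $A_\sigma(f)_{\mathcal{C}(\boldsymbol{R})}\le\|f-g_\sigma\|_{\mathcal{C}(\boldsymbol{R})}$ is immediate from the definition of $A_\sigma$ as an infimum over $\mathcal{G}_\sigma(\mathcal{C}(\boldsymbol{R}))$ once $g_\sigma$ is produced, while the right inequality is Proposition 25 of \cite{bbsv06} applied in $\mathcal{C}(\boldsymbol{R})$. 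I note that routing (a)(i) instead through the Jackson inequality (\ref{jaksn}) and item 5 of Theorem \ref{xv}, $\Omega_r(f,1/\sigma)\le 2^{-r}\sigma^{-r}\|f^{(r)}\|$, would only give the constant $5\pi 2^{r-2}c_8(r)$, which is far larger than $\tfrac{5\pi}{4}4^r$ since $c_8(r)\gg 2^r$; so the sharper form genuinely needs the cited construction. For (b)(i) I would simply invoke the Ibragimov estimate \cite[p.397]{II3}, which holds verbatim in $\mathcal{C}(\boldsymbol{R})$.

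For the weak inverse (a)(ii) the plan is the classical Bernstein-plus-telescoping argument. First I would fix near-best approximants $g_{2^j}\in\mathcal{G}_{2^j}(\mathcal{C}(\boldsymbol{R}))$ with $\|f-g_{2^j}\|_{\mathcal{C}(\boldsymbol{R})}\le A_{2^j}(f)_{\mathcal{C}(\boldsymbol{R})}$ (obtainable from the de la Vall\'ee Poussin means of Remark \ref{rmrk}), and write the telescoping representation $f=g_1+\sum_{j\ge 0}(g_{2^{j+1}}-g_{2^j})$, convergent uniformly since $A_\sigma\to 0$. Differentiating $k$ times and applying Bernstein's inequality to each block, $\|(g_{2^{j+1}}-g_{2^j})^{(k)}\|\lesssim (2^{j+1})^k(A_{2^j}(f)+A_{2^{j+1}}(f))$, I would bound $\|f^{(k)}\|$ by the dyadic sum $\sum_j 2^{jk}A_{2^j}(f)$ and then convert it to $\sum_\nu(\nu+1)^{k-1}A_\nu(f)$ using the monotonicity of $\nu\mapsto A_\nu(f)$. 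Since $k\le r$ this is dominated by $\sum_\nu(\nu+1)^{r-1}A_\nu(f)=\sum_\nu\frac{(\nu+1)^r}{\nu+1}A_\nu(f)$, finite by hypothesis; the prefactor $(1+2^{2k-1})2^{k+2}\pi^k c_8(k)$ is then assembled from the Bernstein constant in the de la Vall\'ee Poussin normalization (the source of $\pi^k$) together with the constants of Theorems \ref{jksn} and \ref{DI}.

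The weak inverse (b)(ii) I would reduce to (a)(ii) applied to the error $h:=f-J(f,\sigma/2)$. The first inequality is automatic: by Remark \ref{rmrk}(i) the mean $J(f^{(r)},\sigma/2)\in\mathcal{G}_\sigma(\mathcal{C}(\boldsymbol{R}))$, so it is an admissible competitor and $A_\sigma(f^{(r)})\le\|f^{(r)}-J(f^{(r)},\sigma/2)\|$. For the second, Remark \ref{rmrk}(iv) gives $J(f^{(r)},\sigma/2)=(J(f,\sigma/2))^{(r)}$, whence $f^{(r)}-J(f^{(r)},\sigma/2)=h^{(r)}$, and I would feed $h$ into (a)(ii) with $k=r$. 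The decisive point is the behaviour of $A_\nu(h)$ across the split at $\lfloor\sigma\rfloor$: for $\nu\ge\lfloor\sigma\rfloor+1>\sigma$ we have $\mathcal{G}_\sigma\subseteq\mathcal{G}_\nu$, so subtracting $J(f,\sigma/2)\in\mathcal{G}_\sigma$ leaves the best approximation unchanged, $A_\nu(h)=A_\nu(f)$, giving the tail $\sum_{\nu=\lfloor\sigma\rfloor+1}^\infty\frac{(\nu+1)^r}{\nu+1}A_\nu(f)$; for $\nu\le\lfloor\sigma\rfloor$ I would use $A_\nu(h)\le\|h\|$, where $\|h\|=\|f-J(f,\sigma/2)\|$ is controlled by the best approximation at scale $\sigma$ through the reproduction property Remark \ref{rmrk}(ii) and the norm bound Remark \ref{rmrk}(iii), producing the low-frequency contribution $A_\sigma(f)\sum_{k=0}^{\lfloor\sigma\rfloor}\frac{k^r}{k}$.

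The main obstacle I anticipate is not the strategy but the analytic justification of two steps. The first is the term-by-term differentiation of the telescoping series: I must check that the series of $k$-th derivatives converges uniformly (guaranteed by the summability hypothesis), so that its sum is genuinely $f^{(k)}$ rather than a formal derivative, legitimizing the interchange of $\sum$ and $d^k/dx^k$. The second is the precise constant tracking, namely pinning down the Bernstein inequality in exactly the normalization that yields $\pi^k c_8(k)$ and, in (b)(ii), reconciling the natural scale of $\|f-J(f,\sigma/2)\|$ (bounded by a multiple of $A_{\sigma/2}(f)$ via Remark \ref{rmrk}) with the $A_\sigma(f)$ appearing in the stated low-frequency term. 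Both are routine in spirit but require care to produce the asserted explicit constants.
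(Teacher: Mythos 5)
Your handling of (a)(i) and (b)(i) as quotations agrees with the paper, which disposes of them with the single word ``known''.

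For (a)(ii) your route is genuinely different from the paper's. The paper does not telescope: it writes $\Vert f^{(k)}\Vert_{\mathcal{C}(\boldsymbol{R})}=\lim_{\sigma\rightarrow\infty}\Vert (J(f,\sigma/2))^{(k)}\Vert_{\mathcal{C}(\boldsymbol{R})}$ using Remark \ref{rmrk}(iv)--(v), estimates the derivative of the de la Vall\'{e}e Poussin mean by a Nikol'skii--Stechkin type inequality $\Vert g^{(k)}\Vert \leq (\pi/2)^{k}\delta^{-k}\sup_{|h|\leq\delta}\Vert (I-\tilde{T}_{h})^{k}g\Vert$ (the source of $\pi^{k}$), passes from shifted differences to the Steklov modulus via Corollary \ref{MMM} (the source of $2^{k}c_{8}(k)$), and finally applies the inverse estimate (\ref{cvrs}) of Theorem \ref{jksn} in discrete form (the source of $1+2^{2k-1}$ and of the series of $A_{\nu}$'s). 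Your telescoping-plus-Bernstein argument is the classical alternative: it is more elementary, self-contained modulo Bernstein's inequality $\Vert g'\Vert\leq\sigma\Vert g\Vert$ for $g\in\mathcal{G}_{\sigma}(\mathcal{C}(\boldsymbol{R}))$, and after the dyadic-to-arithmetic conversion it yields a constant of order $2^{2k+2}$, far smaller than the stated $(1+2^{2k-1})2^{k+2}\pi^{k}c_{8}(k)$, so the claimed inequality follows a fortiori. Two small corrections: to bound $\Vert g_{1}^{(k)}\Vert$ you must subtract the best constant $g_{0}$ (bounded entire functions of type $0$ are constants, hence $g_{0}^{(k)}=0$), since Bernstein applied to $g_{1}$ alone brings in $\Vert f\Vert$ rather than $A_{0}(f)$; and the stated prefactor is not ``assembled from'' the constants of Theorems \ref{jksn} and \ref{DI} in \emph{your} proof --- that description fits the paper's proof --- in yours you need only check that your smaller constant is dominated by the printed one.

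For (b)(ii) the point you flag as needing ``care'' is a genuine gap, not constant bookkeeping, and the paper offers nothing to compare against: its entire proof is the sentence that (b)(ii) ``follows from (i) b)''. Your reduction --- apply (a)(ii) to $h:=f-J(f,\sigma/2)$, use $A_{\nu}(h)=A_{\nu}(f)$ for $\nu\geq\lfloor\sigma\rfloor+1$ and $A_{\nu}(h)\leq\Vert h\Vert$ below --- is the natural one and the tail identification is correct, but the low-frequency step cannot produce $A_{\sigma}(f)$: the operator $J(\cdot,\sigma/2)$ reproduces only $\mathcal{G}_{\sigma/2}(\mathcal{C}(\boldsymbol{R}))$, so Remark \ref{rmrk}(ii)--(iii) give $\Vert h\Vert\leq\frac{5}{2}A_{\sigma/2}(f)_{\mathcal{C}(\boldsymbol{R})}$ and nothing sharper, and no refinement closes this gap because the inequality as printed is false. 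Take $f(x)=\sin(\sigma x)$: then $f\in\mathcal{G}_{\sigma}(\mathcal{C}(\boldsymbol{R}))$, so $A_{\nu}(f)_{\mathcal{C}(\boldsymbol{R})}=0$ for every $\nu\geq\sigma$, the summability hypothesis holds, and the whole right-hand side of (b)(ii) vanishes; yet the Fourier multiplier of $J(\cdot,\sigma/2)$ is the trapezoid equal to $1$ on $[-\sigma/2,\sigma/2]$ and vanishing at $\pm\sigma$, so $J(f^{(r)},\sigma/2)=0$ and the middle term equals $\Vert f^{(r)}\Vert_{\mathcal{C}(\boldsymbol{R})}=\sigma^{r}>0$. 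What your argument actually proves is the corrected statement with $A_{\sigma/2}(f)$ in place of $A_{\sigma}(f)$ in the low-frequency term (or, as in the classical result quoted in Theorem \ref{turters}(ii), with the individual $A_{\nu}(f)$ kept inside the sum); you should assert that version rather than promise that care with constants will recover the printed one.
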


\begin{theorem}
\label{march} Let $r,k\in \mathrm{N}$, $0<t\leq 1/2$, $0\leq \delta <\infty $
and $f\in \mathcal{C}(\boldsymbol{R})$. Then

(i) there holds%
\begin{equation*}
\Omega _{r+k}\left( f,\delta \right) _{\mathcal{C}(\boldsymbol{R})}\leq
2^{k}\Omega _{r}\left( f,\delta \right) _{\mathcal{C}(\boldsymbol{R})}\text{%
,\quad and}
\end{equation*}

(ii) its weak inverse (Marchaud inequality)%
\begin{equation*}
\Omega _{r}\left( f,t\right) _{\mathcal{C}(\boldsymbol{R})}\leq C_{9}\left(
r,k\right) t^{r}\int_{t}^{1}\frac{\Omega _{r+k}\left( f,u\right) _{\mathcal{C%
}(\boldsymbol{R})}}{u^{r+1}}du
\end{equation*}%
with $C_{9}\left( r,k\right) =10\pi \left( 1+2^{2r-1}\right)
2_{\;}^{2r+3k}c_{8}\left( r+k\right) .$
\end{theorem}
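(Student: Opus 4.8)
The plan is to dispatch (i) at once and to reduce (ii) to the two approximation inequalities already assembled in Theorem \ref{jksn}.

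For (i) no new idea is needed: the one-step estimate underlying item 4 of Theorem \ref{xv}, namely $\Omega_{m}(f,\delta)_{\mathcal{C}(\boldsymbol{R})}\le 2\,\Omega_{m-1}(f,\delta)_{\mathcal{C}(\boldsymbol{R})}$ (which itself follows from $\|T_{\delta}g\|_{\mathcal{C}(\boldsymbol{R})}\le\|g\|_{\mathcal{C}(\boldsymbol{R})}$, item 3, applied to $g=(I-T_{\delta})^{m-1}f$), applied $k$ times from $m=r+k$ down to $m=r+1$, gives exactly the factor $2^{k}$ in $\Omega_{r+k}(f,\delta)_{\mathcal{C}(\boldsymbol{R})}\le 2^{k}\Omega_{r}(f,\delta)_{\mathcal{C}(\boldsymbol{R})}$.

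For (ii) the natural route is the best-approximation sandwich. First I would feed $f$ into the inverse inequality \eqref{cvrs}, so that $\Omega_{r}(f,t)_{\mathcal{C}(\boldsymbol{R})}$ is dominated by $(1+2^{2r-1})2^{r-1}t^{r}$ times $A_{0}(f)_{\mathcal{C}(\boldsymbol{R})}+\int_{1/2}^{1/t}u^{r-1}A_{u}(f)_{\mathcal{C}(\boldsymbol{R})}\,du$. The substitution $u\mapsto 1/u$ converts the integral into $\int_{t}^{2}u^{-r-1}A_{1/u}(f)_{\mathcal{C}(\boldsymbol{R})}\,du$. Now I would raise the order in the Jackson estimate \eqref{jaksn} from $r$ to $r+k$, obtaining $A_{1/u}(f)_{\mathcal{C}(\boldsymbol{R})}\le 5\pi 4^{r+k-1}c_{8}(r+k)\,\Omega_{r+k}(f,u)_{\mathcal{C}(\boldsymbol{R})}$, and insert this under the integral sign. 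Splitting $\int_{t}^{2}=\int_{t}^{1}+\int_{1}^{2}$ isolates the desired quantity $\int_{t}^{1}u^{-r-1}\Omega_{r+k}(f,u)_{\mathcal{C}(\boldsymbol{R})}\,du$, and multiplying the constants $(1+2^{2r-1})2^{r-1}$, $5\pi4^{r+k-1}$ and $c_{8}(r+k)$ yields a constant of the shape of $C_{9}(r,k)$.

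The hard part will be the two leftover pieces, the boundary term $A_{0}(f)_{\mathcal{C}(\boldsymbol{R})}$ and the tail $\int_{1}^{2}$. On $[1,2]$ the integrand is harmless: monotonicity of $\Omega_{r+k}$ (item 1 of Theorem \ref{xv}) together with $\Omega_{r+k}(f,2)_{\mathcal{C}(\boldsymbol{R})}\le 2^{r+k}\|f\|_{\mathcal{C}(\boldsymbol{R})}$ (item 4) bounds it by a multiple of $\|f\|_{\mathcal{C}(\boldsymbol{R})}$, and $A_{0}(f)_{\mathcal{C}(\boldsymbol{R})}\le\|f\|_{\mathcal{C}(\boldsymbol{R})}$ since the zero function is of exponential type $0$. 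Thus the method as it stands delivers a bound of the form $C_{9}(r,k)t^{r}\bigl(\int_{t}^{1}u^{-r-1}\Omega_{r+k}(f,u)_{\mathcal{C}(\boldsymbol{R})}\,du+\|f\|_{\mathcal{C}(\boldsymbol{R})}\bigr)$, and the real obstacle is to decide whether this large-scale contribution can be absorbed into the integral so as to leave the purely integral right-hand side claimed in \eqref{cvrs}. This is precisely the point at which the classical Marchaud inequality on $\boldsymbol{R}$ carries an additive $\|f\|$ term, and it is where I would concentrate the effort, for instance by extending the range of integration to scales on which $\Omega_{r+k}(f,u)_{\mathcal{C}(\boldsymbol{R})}$ has already saturated to a fixed multiple of $\|f\|_{\mathcal{C}(\boldsymbol{R})}$, or by producing a lower bound for $\int_{t}^{1}u^{-r-1}\Omega_{r+k}(f,u)_{\mathcal{C}(\boldsymbol{R})}\,du$ in terms of $\|f\|_{\mathcal{C}(\boldsymbol{R})}$ under the running hypotheses. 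A dyadic cross-check confirms the same picture: telescoping a one-step estimate of the form $\Omega_{r}(f,2^{j}t)_{\mathcal{C}(\boldsymbol{R})}\le 2^{-r}\Omega_{r}(f,2^{j+1}t)_{\mathcal{C}(\boldsymbol{R})}+c_{10}(r,k)\,\Omega_{r+k}(f,2^{j}t)_{\mathcal{C}(\boldsymbol{R})}$ (obtainable through the shift/Steklov equivalence of Corollary \ref{MMM}) reproduces the integral, while the remainder $2^{-rN}\Omega_{r}(f,1)_{\mathcal{C}(\boldsymbol{R})}$ is once more of size $t^{r}\|f\|_{\mathcal{C}(\boldsymbol{R})}$, so this term is exactly the delicate quantity that the final assembly of constants in $C_{9}(r,k)$ must account for.
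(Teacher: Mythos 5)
Your part (i) is exactly the paper's argument (iterate item 4 of Theorem \ref{xv} $k$ times), and your route for part (ii) --- the inverse estimate \eqref{cvrs} combined with the Jackson inequality \eqref{jaksn} applied at order $r+k$ --- is also the paper's route; the paper merely keeps the dyadic sums $\sum_{\nu }2^{\nu r}A_{2^{\nu -1}}\left( f\right) _{\mathcal{C}(\boldsymbol{R})}$ from the proof of Theorem \ref{jksn} instead of passing through the integral form, which is cosmetic. What your computation actually establishes, namely
\begin{equation*}
\Omega _{r}\left( f,t\right) _{\mathcal{C}(\boldsymbol{R})}\leq C\left(
r,k\right) t^{r}\left( \int_{t}^{1}\frac{\Omega _{r+k}\left( f,u\right) _{
\mathcal{C}(\boldsymbol{R})}}{u^{r+1}}\,du+\left\Vert f\right\Vert _{
\mathcal{C}(\boldsymbol{R})}\right) ,
\end{equation*}
is correct, with sound constant bookkeeping up to the point where you stop.

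The obstacle you flag --- absorbing $A_{0}\left( f\right) _{\mathcal{C}(\boldsymbol{R})}$, equivalently $\left\Vert f\right\Vert _{\mathcal{C}(\boldsymbol{R})}$, into the integral --- is not a defect of your execution: it is precisely the step at which the paper's own proof breaks down. Between the second and third displays of that proof, the term $A_{0}\left( f\right) _{\mathcal{C}(\boldsymbol{R})}$ is silently replaced by $\Omega _{k+r}\left( f,1/2\right) _{\mathcal{C}(\boldsymbol{R})}$, and no such bound can hold. Take $f_{\varepsilon }\left( x\right) =\sin \left( \varepsilon x\right) $. Since $T_{\delta }e^{i\varepsilon x}=m_{\delta }\left( \varepsilon \right) e^{i\varepsilon x}$ with $m_{\delta }\left( \varepsilon \right) =\left( e^{i\varepsilon \delta }-1\right) /\left( i\varepsilon \delta \right) $, one has $\Omega _{j}\left( f_{\varepsilon },\delta \right) _{\mathcal{C}(\boldsymbol{R})}=\left\vert 1-m_{\delta }\left( \varepsilon \right) \right\vert ^{j}$ and $\left\vert 1-m_{\delta }\left( \varepsilon \right) \right\vert \asymp \varepsilon \delta /2$ as $\varepsilon \delta \rightarrow 0$; hence $\Omega _{k+r}\left( f_{\varepsilon },1/2\right) _{\mathcal{C}(\boldsymbol{R})}=O\left( \varepsilon ^{k+r}\right) $, while $A_{0}\left( f_{\varepsilon }\right) _{\mathcal{C}(\boldsymbol{R})}=1$ because the elements of $\mathcal{G}_{0}\left( \mathcal{C}(\boldsymbol{R})\right) $ are constants (Bernstein's inequality gives $\left\Vert g^{\prime }\right\Vert _{\mathcal{C}(\boldsymbol{R})}\leq \sigma \left\Vert g\right\Vert _{\mathcal{C}(\boldsymbol{R})}$ for every $\sigma >0$ when $g$ has type $0$). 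Worse, the same family refutes statement (ii) itself as printed: its left-hand side is $\asymp \left( \varepsilon t/2\right) ^{r}$, while its right-hand side is $O\left( t^{r}\varepsilon ^{r+k}\right) $, so the ratio blows up as $\varepsilon \rightarrow 0$ with $t$ fixed. The same remainder of size $t^{r}\left\Vert f\right\Vert _{\mathcal{C}(\boldsymbol{R})}$ that appears in your dyadic cross-check is therefore unavoidable: on the real line the Marchaud inequality genuinely requires either the additive $\left\Vert f\right\Vert _{\mathcal{C}(\boldsymbol{R})}$ term (your version) or an integral extended to $+\infty $. Theorem \ref{march}(ii) as stated is false, and the flaw in the paper occurs at exactly the point you identified.
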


\begin{theorem}
\label{turters} Let $\sigma >0$ and $f\in \mathcal{C}(\boldsymbol{R})$. If $%
\ \sum\nolimits_{\nu =0}^{\infty }(\nu +1)^{k-1}A_{\nu }\left( f\right) _{%
\mathcal{C}(\boldsymbol{R})}<\infty $, holds for some $k\in \mathrm{N}$,
then,

(i) the following Jackson type inequality for derivatives 
\begin{equation*}
A_{\sigma }\left( f\right) _{\mathcal{C}(\boldsymbol{R})}\leq \left( 5\pi
\right) ^{k+1}c_{8}\left( r\right) \sigma ^{-k}\Omega _{r}\left( f^{\left(
k\right) },\sigma ^{-1}\right) _{\mathcal{C}(\boldsymbol{R})}\text{,\quad and%
}
\end{equation*}

(ii) its weak inverse (see Theorem 6.3.4 of \cite[p.343]{II3})%
\begin{equation*}
\Omega _{r}\left( f^{\left( k\right) },\frac{1}{\sigma }\right) _{\mathcal{C}%
(\boldsymbol{R})}\leq 2^{2k+r+1}\left( \frac{1}{\sigma ^{r}}\sum\limits_{\nu
=0}^{\lfloor \sigma \rfloor }\frac{\left( \nu +1\right) ^{r+k}}{\nu +1}%
A_{\nu }\left( f\right) _{\mathcal{C}(\boldsymbol{R})}+\sum\limits_{\nu
=\lfloor \sigma \rfloor +1}^{\infty }\frac{\nu ^{k}}{\nu }A_{\nu }\left(
f\right) _{\mathcal{C}(\boldsymbol{R})}\right)
\end{equation*}%
are hold.
\end{theorem}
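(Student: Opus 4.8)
The plan is to handle the two inequalities separately: the Jackson-type bound (i) follows by chaining two already-established estimates, while the inverse bound (ii) requires a Stechkin-type dyadic decomposition in the spirit of Theorem 6.3.4 of \cite{II3}.

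For (i), I would first reduce the best approximation of $f$ to that of its $k$-th derivative. Since the hypothesis $\sum_{\nu=0}^{\infty}(\nu+1)^{k-1}A_{\nu}(f)_{\mathcal{C}(\boldsymbol{R})}<\infty$ forces $f$ to possess a continuous $k$-th derivative (this is the content of the inverse estimate in Theorem \ref{bbsv}(a)(ii)), I may apply Theorem \ref{bbsv}(b)(i) with its order taken to be $k$, obtaining
\begin{equation*}
A_{\sigma}(f)_{\mathcal{C}(\boldsymbol{R})}\leq \frac{(5\pi)^{k}}{\sigma^{k}}A_{\sigma}\left(f^{(k)}\right)_{\mathcal{C}(\boldsymbol{R})}.
\end{equation*}
It then remains to bound $A_{\sigma}(f^{(k)})$ by a modulus of smoothness of $f^{(k)}$, which is precisely the Jackson inequality \eqref{jaksn} of Theorem \ref{jksn} applied to the function $f^{(k)}\in\mathcal{C}(\boldsymbol{R})$. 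Substituting $A_{\sigma}(f^{(k)})\leq 5\pi\,4^{r-1}c_{8}(r)\,\Omega_{r}(f^{(k)},1/\sigma)_{\mathcal{C}(\boldsymbol{R})}$ yields the assertion, the constant collapsing to $(5\pi)^{k+1}c_{8}(r)$ up to the harmless extra factor $4^{r-1}$ coming from the Jackson constant.

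For (ii) — the harder direction — I would use a dyadic block decomposition. For each level $j$ choose a best approximant $g_{2^{j}}\in\mathcal{G}_{2^{j}}\left(\mathcal{C}(\boldsymbol{R})\right)$ with $\left\Vert f-g_{2^{j}}\right\Vert_{\mathcal{C}(\boldsymbol{R})}=A_{2^{j}}(f)$, set the blocks $Q_{j}:=g_{2^{j+1}}-g_{2^{j}}$ (entire of exponential type $\leq 2^{j+1}$), and note $\left\Vert Q_{j}\right\Vert_{\mathcal{C}(\boldsymbol{R})}\leq 2A_{2^{j}}(f)$. The hypothesis together with the classical Bernstein inequality $\left\Vert Q_{j}^{(s)}\right\Vert_{\mathcal{C}(\boldsymbol{R})}\leq (2^{j+1})^{s}\left\Vert Q_{j}\right\Vert_{\mathcal{C}(\boldsymbol{R})}$ guarantees that the series $f^{(k)}=\sum_{j}Q_{j}^{(k)}$ converges uniformly. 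Fixing $N$ with $2^{N}\leq\sigma<2^{N+1}$, I would split $f=P+R$ with $P:=\sum_{j\leq N}Q_{j}$ of type $\leq 2\sigma$ and $R:=\sum_{j>N}Q_{j}$, so that
\begin{equation*}
\Omega_{r}\left(f^{(k)},\tfrac{1}{\sigma}\right)_{\mathcal{C}(\boldsymbol{R})}\leq \Omega_{r}\left(P^{(k)},\tfrac{1}{\sigma}\right)_{\mathcal{C}(\boldsymbol{R})}+\Omega_{r}\left(R^{(k)},\tfrac{1}{\sigma}\right)_{\mathcal{C}(\boldsymbol{R})}.
\end{equation*}

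To the smooth summand I would apply property $5$ of Theorem \ref{xv}, giving $\Omega_{r}(P^{(k)},1/\sigma)\leq 2^{-r}\sigma^{-r}\left\Vert P^{(k+r)}\right\Vert_{\mathcal{C}(\boldsymbol{R})}$, and to the tail property $4$, giving $\Omega_{r}(R^{(k)},1/\sigma)\leq 2^{r}\left\Vert R^{(k)}\right\Vert_{\mathcal{C}(\boldsymbol{R})}$; estimating the block derivatives by Bernstein then produces $\left\Vert P^{(k+r)}\right\Vert\leq 2\sum_{j\leq N}(2^{j+1})^{k+r}A_{2^{j}}(f)$ and $\left\Vert R^{(k)}\right\Vert\leq 2\sum_{j>N}(2^{j+1})^{k}A_{2^{j}}(f)$. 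The final step, which I expect to be the main technical obstacle, is to pass from these geometric dyadic sums to the full sums $\sigma^{-r}\sum_{\nu\leq\lfloor\sigma\rfloor}(\nu+1)^{r+k-1}A_{\nu}(f)$ and $\sum_{\nu>\lfloor\sigma\rfloor}\nu^{k-1}A_{\nu}(f)$ appearing in the statement. This uses the monotonicity of $\nu\mapsto A_{\nu}(f)$ to compare each block weight $(2^{j+1})^{s}A_{2^{j}}(f)$ with a sum of $(\nu+1)^{s-1}A_{\nu}(f)$ over $\nu$ in the preceding dyadic window $(2^{j-1},2^{j}]$, and careful bookkeeping of the emerging powers of $2$ collapses all constants into the single factor $2^{2k+r+1}$.
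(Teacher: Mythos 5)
Your reconstruction is judged on its own merits, because the paper contains no proof of this theorem: part (ii) is simply quoted from Theorem 6.3.4 of Ibragimov's book \cite{II3}, and part (i) is stated without argument. For (i), your route is the natural one from the paper's toolkit: the summability hypothesis gives $f^{(k)}\in\mathcal{C}(\boldsymbol{R})$ (Theorem \ref{bbsv}(a)(ii), or Remark \ref{rmrk}(v)), then Theorem \ref{bbsv}(b)(i) with order $k$, then the Jackson inequality \eqref{jaksn} for $f^{(k)}$. But be precise about what this yields: the constant is $(5\pi)^{k+1}4^{r-1}c_{8}(r)$, whereas the theorem asserts $(5\pi)^{k+1}c_{8}(r)$. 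For $r\geq 2$ your bound is strictly weaker than the one stated, so you have not proved the inequality as written; calling the factor $4^{r-1}$ ``harmless'' sidesteps this. (The discrepancy is almost certainly a defect of the paper's stated constant, since no path through \eqref{jaksn} avoids $4^{r-1}$, but it should be flagged as such rather than waved away.)

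For (ii), the dyadic Bernstein--Stechkin decomposition is exactly the classical argument behind the cited theorem, and your constant accounting for the two main terms is right: the head gives $2^{-r}\cdot 2\cdot 2^{2(k+r)}=2^{2k+r+1}$ and the tail gives $2^{r}\cdot 2\cdot 2^{2k}=2^{2k+r+1}$. Two gaps need repair. First, $\sum_{j\geq 0}Q_{j}$ telescopes to $f-g_{1}$, not to $f$, so $f^{(k)}=g_{1}^{(k)}+\sum_{j\geq 0}Q_{j}^{(k)}$, and $g_{1}^{(k)}$ is not covered by your block estimates; since $\Vert g_{1}\Vert_{\mathcal{C}(\boldsymbol{R})}$ is not controlled by the $A_{\nu}(f)_{\mathcal{C}(\boldsymbol{R})}$, you must write $g_{1}=g_{0}+(g_{1}-g_{0})$ with $g_{0}$ a best approximant of type $0$ (bounded entire functions of exponential type $0$ are constants, so $g_{0}^{(k)}=0$) and use Bernstein to get $\Vert g_{1}^{(k)}\Vert_{\mathcal{C}(\boldsymbol{R})}\leq\Vert g_{1}-g_{0}\Vert_{\mathcal{C}(\boldsymbol{R})}\leq 2A_{0}(f)_{\mathcal{C}(\boldsymbol{R})}$, which is then absorbed by the $\nu=0$ term of the head sum. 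Second, your tail windows start at $2^{N}\leq\sigma$, while the stated tail sum starts at $\lfloor\sigma\rfloor+1$; the stray terms with $2^{N}<\nu\leq\lfloor\sigma\rfloor$ must be pushed into the head sum via $\nu>\sigma/2$, i.e. $\nu^{k-1}A_{\nu}\left( f\right) _{\mathcal{C}(\boldsymbol{R})}\leq 2^{r}\sigma^{-r}\nu^{r+k-1}A_{\nu}\left( f\right) _{\mathcal{C}(\boldsymbol{R})}$, which costs an extra $2^{r}$ and spoils the literal constant $2^{2k+r+1}$. In short: your approach is the correct one and proves both estimates up to absolute constants of the stated shape, but as written it recovers neither stated constant exactly, and the two bookkeeping points above must be added for the argument in (ii) to be complete.
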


\subsection{Proofs of the results of section 2}

\begin{proof}[Proof of Lemma \protect\ref{deg1}]
For $\delta =0$ (\ref{BB}) is obvious. For $0<\delta <\infty $ and $r=1$,
one can find%
\begin{eqnarray}
\frac{d}{dx}T_{\delta }f(x) &=&\frac{d}{dx}\left( \frac{1}{\delta }%
\int\nolimits_{0}^{\delta }f\left( x+t\right) dt\right) =\frac{d}{dx}\left( 
\frac{1}{\delta }\int\nolimits_{x}^{x+\delta }f\left( \tau \right) d\tau
\right)  \label{CC} \\
&=&\frac{1}{\delta }\int\nolimits_{x}^{x+\delta }\frac{d}{dx}f\left( \tau
\right) d\tau =T_{\delta }\frac{d}{dx}f(x).  \notag
\end{eqnarray}

For $r>1$, \ (\ref{BB}) follows from (\ref{CC}).
\end{proof}

\begin{proof}[Proof of Theorem \protect\ref{xv}]
(1)-(3) is known. (4) is seen from binomial expansion. To prove (5) it is
sufficient to note inequality (see \cite{AG}) 
\begin{equation*}
\left\Vert \left( I-T_{\delta }\right) f\right\Vert _{C\left( \Omega \right)
}\leq 2^{-1}\delta \left\Vert f^{\prime }\right\Vert _{C\left( \Omega
\right) }\text{,}\quad \delta >0
\end{equation*}%
for $f\in C^{1}\left( \Omega \right) $. Then%
\begin{equation*}
\left\Vert \left( I-T_{\delta }\right) ^{r}f\right\Vert _{C\left( \Omega
\right) }\leq 2^{-1}\delta \left\Vert \left( I-T_{\delta }\right)
^{r-1}f^{\prime }\right\Vert _{C\left( \Omega \right) }\leq \cdots \leq
2^{-r}\delta ^{r}\left\Vert f^{(r)}\right\Vert _{C\left( \Omega \right) }
\end{equation*}%
for $f\in C^{r}\left( \Omega \right) $, because%
\begin{equation*}
\left[ \left( I-T_{\delta }\right) ^{r}f\right] ^{\prime }=\left(
I-T_{\delta }\right) ^{r}f^{\prime }.
\end{equation*}
\end{proof}

\begin{proof}[Proof of Lemma \protect\ref{da1}]
For $r=2$, by Lemma \ref{deg1},%
\begin{eqnarray*}
\frac{d^{2}}{dx^{2}}T_{\delta }^{2}f &=&\frac{d}{dx}\frac{d}{dx}T_{\delta
}T_{\delta }f=\frac{d}{dx}\frac{d}{dx}T_{\delta }\Psi \qquad \left[ \Psi
:=T_{\delta }f\right] \\
&=&\frac{d}{dx}T_{\delta }\frac{d}{dx}\Psi =\frac{d}{dx}T_{\delta }\frac{d}{%
dx}T_{\delta }f
\end{eqnarray*}%
and the result (\ref{rrss1}) follows. For $r=3$, by Lemma \ref{deg1},%
\begin{equation*}
\frac{d^{3}}{dx^{3}}T_{\delta }^{3}f=\frac{d}{dx}\frac{d^{2}}{dx^{2}}%
T_{\delta }^{2}R_{v}f=\frac{d}{dx}\frac{d^{2}}{dx^{2}}T_{\delta }^{2}\Psi =%
\frac{d}{dx}\frac{d}{dx}T_{\delta }\frac{d}{dx}T_{\delta }\Psi
\end{equation*}%
\begin{equation*}
=\frac{d}{dx}\frac{d}{dx}T_{\delta }\frac{d}{dx}T_{\delta }{}^{2}f=\frac{d}{%
dx}T_{\delta }\frac{d}{dx}\frac{d}{dx}T_{\delta }{}^{2}f=\frac{d}{dx}%
T_{\delta }\frac{d^{2}}{dx^{2}}T_{\delta }^{2}f
\end{equation*}%
and (\ref{rrss1}) holds. Let (\ref{rrss1}) holds for $k\in \mathrm{N}$:%
\begin{equation}
\frac{d^{k}}{dx^{k}}T_{\delta }^{k}f=\frac{d}{dx}T_{\delta }\frac{d^{k-1}}{%
dx^{k-1}}T_{\delta }^{k-1}f.  \label{bula}
\end{equation}%
Then, for $k+1$, (\ref{bula}) and Lemma \ref{deg1} implies that%
\begin{equation*}
\frac{d^{k+1}}{dx^{k+1}}T_{\delta }^{k+1}f=\frac{d}{dx}\frac{d^{k}}{dx^{k}}%
T_{\delta }^{k}T_{\delta }f=\frac{d}{dx}\frac{d^{k}}{dx^{k}}T_{\delta
}^{k}\Psi =\frac{d}{dx}\frac{d}{dx}T_{\delta }\frac{d^{k-1}}{dx^{k-1}}%
T_{\delta }^{k-1}\Psi
\end{equation*}%
\begin{equation*}
=\frac{d}{dx}\frac{d}{dx}T_{\delta }\frac{d^{k-1}}{dx^{k-1}}T_{\delta }^{k}f=%
\frac{d}{dx}T_{\delta }\frac{d}{dx}\frac{d^{k-1}}{dx^{k-1}}T_{\delta }^{k}f=%
\frac{d}{dx}T_{\delta }\frac{d^{k}}{dx^{k}}T_{\delta }^{k}f.
\end{equation*}
\end{proof}

\begin{proof}[Proof of Theorem \protect\ref{DI}]
For $f\in C\left( \Omega \right) $ we have%
\begin{eqnarray}
\left\Vert \frac{d}{dx}T_{\delta }f\left( x\right) \right\Vert _{C\left(
\Omega \right) } &=&\left\Vert \frac{d}{dx}\frac{1}{\delta }%
\int\nolimits_{0}^{\delta }f\left( x+t\right) dt\right\Vert _{C\left( \Omega
\right) }=  \notag \\
\left\Vert \frac{1}{\delta }\frac{d}{dx}\int\nolimits_{x}^{x+\delta }f\left(
\tau \right) d\tau \right\Vert _{C\left( \Omega \right) } &=&\left\Vert 
\frac{1}{\delta }\left( f\left( x+\delta \right) -f\left( x\right) \right)
\right\Vert _{C\left( \Omega \right) }\leq \frac{2}{\delta }\left\Vert
f\right\Vert _{C\left( \Omega \right) }.  \label{iki}
\end{eqnarray}%
Inequality (\ref{iki}) also implies%
\begin{equation*}
\left\Vert \left( \frac{d}{dx}\right) ^{2}T_{\delta }f\left( x\right)
\right\Vert _{C\left( \Omega \right) }\leq \frac{2}{\delta }\left\Vert \frac{%
d}{dx}T_{\delta }f\right\Vert _{C\left( \Omega \right) }
\end{equation*}%
for $f\in C\left( \Omega \right) $. If $f\in C^{2}\left( \Omega \right) $
one can get%
\begin{equation}
\left\Vert f\left( x\right) -T_{\delta }f\left( x\right) +\frac{\delta }{2}%
\frac{d}{dx}f\left( x\right) \right\Vert _{C\left( \Omega \right) }\leq 
\frac{\delta ^{2}}{6}\left\Vert \frac{d^{2}}{dx^{2}}f\right\Vert _{C\left(
\Omega \right) }.  \label{vor}
\end{equation}%
To obtain (\ref{vor}) we will use the Taylor formula%
\begin{equation*}
f\left( x+t\right) =f(x)+t\frac{d}{dx}f(x)+\frac{t^{2}}{2}\frac{d^{2}}{dx^{2}%
}f(\xi )
\end{equation*}%
for some $\xi \leq \left[ x,x+t\right] $. Then integrating the last equation
with respect to $t$%
\begin{align*}
\frac{1}{\delta }\int\nolimits_{0}^{\delta }f\left( x+t\right) dt& =f(x)+%
\frac{1}{\delta }\int\nolimits_{0}^{\delta }tdt\frac{d}{dx}f(x)+\frac{1}{2}%
\frac{1}{\delta }\int\nolimits_{0}^{\delta }t^{2}dt\frac{d^{2}}{dx^{2}}f(\xi
), \\
T_{\delta }f\left( x\right) & =f(x)+\frac{\delta }{2}\frac{d}{dx}f\left(
x\right) +\frac{\delta ^{2}}{6}\frac{d^{2}}{dx^{2}}f(\xi )
\end{align*}%
and (\ref{vor}) holds.

Now (\ref{iki}) and (\ref{vor}) imply that%
\begin{equation}
\left( 1/36\right) K_{1}\left( f,\delta ,C\left( \Omega \right) \right)
_{C\left( \Omega \right) }\leq \left\Vert \left( \mathbb{I}-T_{\delta
}\right) f\right\Vert _{C\left( \Omega \right) }\leq 2K_{1}\left( f,\delta
,C\left( \Omega \right) \right) _{C\left( \Omega \right) }.  \label{DI3}
\end{equation}%
Firstly, let us prove the right hand side of (\ref{DI3}). For any $g\in
C^{1}\left( \Omega \right) $%
\begin{align*}
\left\Vert f-T_{\delta }f\right\Vert _{C\left( \Omega \right) }& \leq
\left\Vert f-g\right\Vert _{C\left( \Omega \right) }+\left\Vert g-T_{\delta
}g\right\Vert _{C\left( \Omega \right) }+\left\Vert T_{\delta }\left(
g-f\right) \right\Vert _{C\left( \Omega \right) } \\
& \leq 2\left\Vert f-g\right\Vert _{C\left( \Omega \right) }+\frac{\delta }{2%
}\left\Vert g^{\prime }\right\Vert _{C\left( \Omega \right) }\leq
2K_{1}\left( f,\delta ,C\left( \Omega \right) \right) _{C\left( \Omega
\right) }.
\end{align*}

For the left hand side of inequality (\ref{DI3}) we need inequalities%
\begin{align}
\left\Vert f-T_{\delta }^{2}f\right\Vert _{C\left( \boldsymbol{R}\right) }&
\leq 2\left\Vert f-T_{\delta }f\right\Vert _{C\left( \boldsymbol{R}\right) },
\label{uc} \\
\delta \left\Vert \left( \frac{d}{dx}\right) ^{2}T_{\delta }^{2}f\right\Vert
_{C\left( \boldsymbol{R}\right) }& \leq 34\left\Vert f-T_{\delta
}f\right\Vert _{C\left( \boldsymbol{R}\right) }.  \label{dort}
\end{align}

First we prove (\ref{uc}). Then%
\begin{align*}
\left\Vert f-T_{\delta }^{2}f\right\Vert _{C\left( \Omega \right) }& \leq
\left\Vert f-T_{\delta }f\right\Vert _{C\left( \Omega \right) }+\left\Vert
T_{\delta }f-T_{\delta }T_{\delta }f\right\Vert _{C\left( \Omega \right) } \\
& \leq 2\left\Vert f-T_{\delta }f\right\Vert _{C\left( \Omega \right) }.
\end{align*}%
Now we consider inequality (\ref{dort}). In (\ref{vor}) we replace $f$ by $%
T_{\delta }^{2}f$ and obtain%
\begin{equation*}
\left\Vert T_{\delta }^{2}f\left( x\right) -T_{\delta }T_{\delta
}^{2}f\left( x\right) +\frac{\delta }{2}\frac{d}{dx}T_{\delta }^{2}f\left(
x\right) \right\Vert _{C\left( \Omega \right) }\leq \frac{\delta ^{2}}{6}%
\left\Vert \frac{d^{2}}{dx^{2}}T_{\delta }^{2}f\right\Vert _{C\left( \Omega
\right) }.
\end{equation*}%
On the other hand, by (\ref{iki}),%
\begin{align*}
\left\Vert \frac{d^{2}}{dx^{2}}T_{\delta }^{2}f\right\Vert _{C\left( \Omega
\right) }& \leq \frac{2}{\delta }\left\Vert \frac{d}{dx}T_{\delta
}f\right\Vert _{C\left( \Omega \right) } \\
& \leq \frac{2}{\delta }\left\{ \left\Vert \frac{d}{dx}T_{\delta
}^{2}f\right\Vert _{C\left( \Omega \right) }+\left\Vert \frac{d}{dx}%
T_{\delta }\left( T_{\delta }f-f\right) \right\Vert _{C\left( \Omega \right)
}\right\} \\
& \leq \frac{2}{\delta }\left\Vert \frac{d}{dx}T_{\delta }^{2}f\right\Vert
_{C\left( \Omega \right) }+\frac{4}{\delta ^{2}}\left\Vert T_{\delta
}f-f\right\Vert _{C\left( \Omega \right) }.
\end{align*}%
Hence,%
\begin{align*}
\frac{\delta }{2}\left\Vert \frac{d}{dx}T_{\delta }^{2}f\right\Vert
_{C\left( \Omega \right) }& \leq \left\Vert T_{\delta }^{2}f-T_{\delta
}T_{\delta }^{2}f-\frac{\delta }{2}\frac{d}{dx}T_{\delta }^{2}f\right\Vert
_{C\left( \Omega \right) }+\left\Vert T_{\delta }^{2}f-T_{\delta }T_{\delta
}^{2}f\right\Vert _{C\left( \Omega \right) } \\
& \leq \frac{\delta ^{2}}{6}\left\Vert \frac{d^{2}}{dx^{2}}T_{\delta
}^{2}f\right\Vert _{C\left( \Omega \right) }+\left\Vert T_{\delta
}^{2}f-T_{\delta }T_{\delta }^{2}f\right\Vert _{C\left( \Omega \right) }
\end{align*}%
\begin{eqnarray*}
&\leq &\frac{\delta ^{2}}{6}\frac{2}{\delta }\left\{ \left\Vert \frac{d}{dx}%
T_{\delta }^{2}f\right\Vert _{C\left( \Omega \right) }+\frac{2}{\delta }%
\left\Vert T_{\delta }f-f\right\Vert _{C\left( \Omega \right) }\right\}
+\left\Vert T_{\delta }^{2}f-f\right\Vert _{C\left( \Omega \right) } \\
&&+\left\Vert T_{\delta }\left( T_{\delta }^{2}f-f\right) \right\Vert
_{C\left( \Omega \right) }+\left\Vert T_{\delta }f-f\right\Vert _{C\left(
\Omega \right) }.
\end{eqnarray*}%
Then%
\begin{align*}
\frac{\delta }{6}\left\Vert \frac{d}{dx}T_{\delta }^{2}f\right\Vert
_{C\left( \Omega \right) }& \leq \frac{17}{3}\left\Vert T_{\delta
}f-f\right\Vert _{C\left( \Omega \right) }, \\
\delta \left\Vert \frac{d}{dx}T_{\delta }^{2}f\right\Vert _{C\left( \Omega
\right) }& \leq 34\left\Vert T_{\delta }f-f\right\Vert _{C\left( \Omega
\right) }.
\end{align*}%
To finish proof of the left hand side of inequality (\ref{eqA}) with $r=1$,
we proceed as%
\begin{align*}
K_{1}\left( f,\delta ,C\left( \Omega \right) \right) _{C\left( \Omega
\right) }& \leq \left\Vert f-T_{\delta }^{2}f\right\Vert _{C\left( \Omega
\right) }+\delta \left\Vert \frac{d}{dx}T_{\delta }^{2}f\right\Vert
_{C\left( \Omega \right) } \\
& \leq 36\left\Vert T_{\delta }f-f\right\Vert _{C\left( \Omega \right) }.
\end{align*}

The proof of (\ref{eqA}) with $r=1$ now completed.

Let $r>1$ be a natural number and we define%
\begin{equation*}
g\left( \cdot \right) =\sum\limits_{l=1}^{r}\left( -1\right) ^{l-1}\binom{r}{%
l}T_{\delta }^{2rl}f\left( \cdot \right) .
\end{equation*}%
Then,%
\begin{equation*}
\left\Vert f-g\right\Vert _{C\left( \Omega \right) }=\left\Vert \left(
I-T_{\delta }^{2r}\right) ^{r}f\right\Vert _{C\left( \Omega \right) }\leq
(2r)^{r}\left\Vert \left( I-T_{\delta }\right) ^{r}f\right\Vert _{C\left(
\Omega \right) }.
\end{equation*}%
On the other hand,%
\begin{align*}
\delta ^{r}\left\Vert \frac{d^{r}}{dx^{r}}T_{\delta }^{2r}f\right\Vert
_{C\left( \Omega \right) }& =\delta ^{r-1}\delta \left\Vert \frac{d}{dx}%
T_{\delta }^{2}\left( \frac{d^{r-1}}{dx^{r-1}}\right) T_{\delta
}^{2r-2}f\right\Vert _{C\left( \Omega \right) } \\
& \leq 34\delta ^{r-1}\left\Vert \left( I-T_{\delta }\right) \frac{d^{r-1}}{%
dx^{r-1}}T_{\delta }^{2r-2}f\right\Vert _{C\left( \Omega \right) } \\
& \leq \left( 34\right) ^{2}\delta ^{r-2}\left\Vert \left( I-T_{\delta
}\right) ^{2}\frac{d^{r-2}}{dx^{r-2}}T_{\delta }^{2r-4}f\right\Vert
_{C\left( \Omega \right) } \\
& \leq \cdots \leq (34)^{r}\left\Vert \left( I-T_{\delta }\right)
^{r}f\right\Vert _{C\left( \Omega \right) }.
\end{align*}%
Then%
\begin{eqnarray*}
\delta ^{r}\left\Vert \frac{d^{r}}{dx^{r}}T_{\delta }^{2rl}f\right\Vert
_{C\left( \Omega \right) } &\leq &(34)^{r}\left\Vert \left( I-T_{\delta
}\right) ^{r}T_{\delta }^{2r\left( l-1\right) }f\right\Vert _{C\left( \Omega
\right) } \\
&=&(34)^{r}\left\Vert T_{\delta }^{2r\left( l-1\right) }\left( I-T_{\delta
}\right) ^{r}f\right\Vert _{C\left( \Omega \right) }\leq (34)^{r}\left\Vert
\left( I-T_{\delta }\right) ^{r}f\right\Vert _{C\left( \Omega \right) }.
\end{eqnarray*}%
Using the last inequality we find%
\begin{eqnarray*}
\delta ^{r}\left\Vert \frac{d^{r}}{dx^{r}}g\right\Vert _{C\left( \Omega
\right) } &=&\delta ^{r}\left\Vert \frac{d^{r}}{dx^{r}}\sum\limits_{l=1}^{r}%
\left( -1\right) ^{l-1}\binom{r}{l}T_{\delta }^{2rl}f\right\Vert _{C\left(
\Omega \right) } \\
&=&\delta ^{r}\left\Vert \sum\limits_{l=1}^{r}\left( -1\right) ^{l-1}\binom{r%
}{l}\frac{d^{r}}{dx^{r}}T_{\delta }^{2rl}f\right\Vert _{C\left( \Omega
\right) } \\
&\leq &\sum\limits_{l=1}^{r}\left\vert \binom{r}{l}\right\vert \delta
^{r}\left\Vert \frac{d^{r}}{dx^{r}}T_{\delta }^{2rl}f\right\Vert _{C\left(
\Omega \right) }\leq 2^{r}(34)^{r}\left\Vert \left( I-T_{\delta }\right)
^{r}f\right\Vert _{C\left( \Omega \right) }
\end{eqnarray*}%
and%
\begin{eqnarray*}
K_{r}\left( f,\delta ,C\left( \Omega \right) \right) _{C\left( \Omega
\right) } &\leq &\left\Vert f-g\right\Vert _{C\left( \Omega \right) }+\delta
^{r}\left\Vert \frac{d^{r}}{dx^{r}}g\right\Vert _{C\left( \Omega \right) } \\
&\leq &2^{r}\left( r^{r}+(34)^{r}\right) \left\Vert \left( I-T_{\delta
}\right) ^{r}f\right\Vert _{C\left( \Omega \right) }.
\end{eqnarray*}%
For the opposite direction of the last inequality, when $g\in W_{p\left(
\cdot \right) }^{r}$,%
\begin{align}
\Omega _{r}\left( f,\delta \right) _{C\left( \Omega \right) }& \leq
2^{r}\left\Vert f-g\right\Vert _{C\left( \Omega \right) }+\Omega _{r}\left(
g,\delta \right) _{C\left( \Omega \right) }  \notag \\
& \leq 2^{r}\left\Vert f-g\right\Vert _{C\left( \Omega \right)
}+2^{-r}\delta ^{r}\left\Vert g^{\left( r\right) }\right\Vert _{C\left(
\Omega \right) },  \label{mn}
\end{align}%
and taking infimum on $g\in W_{p\left( \cdot \right) }^{r}$ in (\ref{mn}) we
get%
\begin{equation*}
\Omega _{r}\left( f,\delta \right) _{C\left( \Omega \right) }\leq
2^{r}K_{r}\left( f,\delta ,C\left( \Omega \right) \right) _{C\left( \Omega
\right) }.
\end{equation*}
\end{proof}

\begin{proof}[Proof of Proposition \protect\ref{DIcor}]
Let $f\in C\left( \Omega \right) $. Then%
\begin{eqnarray*}
\left\Vert \left( I-T_{h}\right) f\right\Vert _{C\left( \Omega \right) }
&\leq &2K_{1}\left( f,h,C\left( \Omega \right) \right) _{C\left( \Omega
\right) } \\
&\leq &2K_{1}\left( f,\delta ,C\left( \Omega \right) \right) _{C\left(
\Omega \right) }\leq 72\left\Vert \left( I-T_{\delta }\right) f\right\Vert
_{C\left( \Omega \right) }.
\end{eqnarray*}
\end{proof}

\begin{proof}[Proof of Theorem \protect\ref{jksn}]
(i) We consider Jackson type inequality (\ref{jaksn}). For any $g\in X_{%
\mathcal{C}(\boldsymbol{R})}^{r}$ we have%
\begin{equation*}
A_{\sigma }\left( f\right) _{\mathcal{C}(\boldsymbol{R})}\leq A_{\sigma
}\left( f-g\right) _{\mathcal{C}(\boldsymbol{R})}+A_{\sigma }\left( g\right)
_{\mathcal{C}(\boldsymbol{R})}
\end{equation*}%
\begin{equation*}
\leq \left\Vert f-g\right\Vert _{\mathcal{C}(\boldsymbol{R})}+\frac{5\pi }{4}%
\frac{4^{r}}{\sigma ^{r}}\left\Vert \frac{d^{r}}{dx^{r}}g\right\Vert _{%
\mathcal{C}(\boldsymbol{R})}.
\end{equation*}%
Taking infimum on $g\in X_{\mathcal{C}(\boldsymbol{R})}^{r}$ in the last
inequality we have 
\begin{equation*}
A_{\sigma }\left( f\right) _{\mathcal{C}(\boldsymbol{R})}\leq \frac{5\pi
4^{r}}{4}K_{r}\left( f,\frac{1}{\sigma },\mathcal{C}(\boldsymbol{R})\right)
_{\mathcal{C}(\boldsymbol{R})}\leq \frac{5\pi }{4}c_{8}\left( r\right)
4^{r}\left\Vert \left( I-T_{\frac{1}{\sigma }}\right) ^{r}f\right\Vert _{%
\mathcal{C}(\boldsymbol{R})}.
\end{equation*}

(ii) We give the proof of inverse estimate (\ref{cvrs}). Let $\sigma >0$ and 
$g_{\sigma }\in \mathcal{G}_{\sigma }\left( \mathcal{C}(\boldsymbol{R}%
)\right) $ be the best approximating IFFD of $f\in \mathcal{C}(\boldsymbol{R}%
)$. Suppose that $r\in \mathrm{N}$, $0<\delta <1$. Then, there exists a $%
m\in \mathrm{N}$ such that $\lfloor 1/\delta \rfloor $ $=2^{m-1}.$ Hence, $%
2^{m-1}\leq 1/\delta <2^{m}.$ Now we have%
\begin{eqnarray*}
\Omega _{r}\left( f,\delta \right) _{\mathcal{C}(\boldsymbol{R})} &\leq
&\Omega _{r}\left( f-g_{2^{m}},\delta \right) _{\mathcal{C}(\boldsymbol{R}%
)}+\Omega _{r}\left( g_{2^{m}},\delta \right) _{\mathcal{C}(\boldsymbol{R})}
\\
&\leq &2^{r}A_{2^{m}}\left( f\right) _{\mathcal{C}(\boldsymbol{R}%
)}+2^{-r}\delta ^{r}\left\Vert \frac{d^{r}}{dx^{r}}g_{2^{m}}\right\Vert _{%
\mathcal{C}(\boldsymbol{R})}.
\end{eqnarray*}%
On the other hand%
\begin{equation*}
\left\Vert \frac{d^{r}}{dx^{r}}g_{2^{m}}\right\Vert _{\mathcal{C}(%
\boldsymbol{R})}=\left\Vert \sum_{\gamma =1}^{m}\left( \frac{d^{r}}{dx^{r}}%
g_{2^{\gamma }}-\frac{d^{r}}{dx^{r}}g_{2^{\gamma -1}}\right) +\left( \frac{%
d^{r}}{dx^{r}}g_{1}-\frac{d^{r}}{dx^{r}}g_{0}\right) \right\Vert _{\mathcal{C%
}(\boldsymbol{R})}
\end{equation*}%
\begin{equation*}
\leq \sum_{\gamma =1}^{m}2^{\gamma r}\left\Vert g_{2^{\gamma }}-g_{2^{\gamma
-1}}\right\Vert _{\mathcal{C}(\boldsymbol{R})}+\left\Vert
g_{1}-g_{0}\right\Vert _{\mathcal{C}(\boldsymbol{R})}
\end{equation*}%
\begin{equation*}
\leq A_{0}\left( f\right) _{\mathcal{C}(\boldsymbol{R})}+A_{1}\left(
f\right) _{\mathcal{C}(\boldsymbol{R})}+\sum_{\gamma =1}^{m}2^{\gamma
r}\left( A_{2^{\gamma }}\left( f\right) _{\mathcal{C}(\boldsymbol{R}%
)}+A_{2^{\gamma -1}}\left( f\right) _{\mathcal{C}(\boldsymbol{R})}\right)
\end{equation*}%
\begin{equation*}
\leq A_{0}\left( f\right) _{\mathcal{C}(\boldsymbol{R})}+2^{r}A_{1}\left(
f\right) _{\mathcal{C}(\boldsymbol{R})}+2\sum_{\gamma =1}^{m}2^{\gamma
r}A_{2^{\gamma -1}}\left( f\right) _{\mathcal{C}(\boldsymbol{R})}
\end{equation*}%
\begin{equation*}
\leq 2\left( A_{0}\left( f\right) _{\mathcal{C}(\boldsymbol{R}%
)}+\sum_{\gamma =1}^{m}2^{\gamma r}A_{2^{\gamma -1}}\left( f\right) _{%
\mathcal{C}(\boldsymbol{R})}\right) .
\end{equation*}%
Then

\begin{equation*}
\frac{\delta ^{r}}{2^{r}}\left\Vert \frac{d^{r}}{dx^{r}}g_{2^{m}}\right\Vert
_{\mathcal{C}(\boldsymbol{R})}\leq \frac{2}{2^{r}}\delta ^{r}\left(
A_{0}\left( f\right) _{\mathcal{C}(\boldsymbol{R})}+\sum_{\gamma
=1}^{m}2^{\gamma r}A_{q^{\gamma -1}}\left( f\right) _{\mathcal{C}(%
\boldsymbol{R})}\right) .
\end{equation*}%
Hence%
\begin{equation*}
\Omega _{r}\left( f,\delta \right) _{C(\boldsymbol{R})}\leq \frac{2^{(m+1)r}%
}{2^{mr}}A_{2^{m}}\left( f\right) _{\mathcal{C}(\boldsymbol{R})}+\frac{2}{%
2^{r}}\delta ^{r}\left( A_{0}\left( f\right) _{\mathcal{C}(\boldsymbol{R}%
)}+\sum_{\gamma =1}^{m}2^{\gamma r}A_{q^{\gamma -1}}\left( f\right) _{%
\mathcal{C}(\boldsymbol{R})}\right)
\end{equation*}%
\begin{equation*}
\leq \left( 1+2^{2r-1}\right) 2^{1-r}2^{2r}\delta ^{r}\left( A_{0}\left(
f\right) _{\mathcal{C}(\boldsymbol{R})}+\sum_{\gamma
=1}^{m}\int\limits_{2^{\gamma -2}}^{2^{\gamma -1}}u^{r-1}A_{u}\left(
f\right) _{\mathcal{C}(\boldsymbol{R})}du\right)
\end{equation*}%
\begin{equation*}
\leq \left( 1+2^{2r-1}\right) 2^{r-1}\delta ^{r}\left( A_{0}\left( f\right)
_{\mathcal{C}(\boldsymbol{R})}+\int_{1/2}^{2^{m-1}}u^{r-1}A_{u}\left(
f\right) _{\mathcal{C}(\boldsymbol{R})}du\right)
\end{equation*}%
\begin{equation*}
\leq \left( 1+2^{2r-1}\right) 2^{r-1}\delta ^{r}\left( A_{0}\left( f\right)
_{\mathcal{C}(\boldsymbol{R})}+\int_{1/2}^{1/\delta }u^{r-1}A_{u}\left(
f\right) _{\mathcal{C}(\boldsymbol{R})}du\right) .
\end{equation*}
\end{proof}

\begin{proof}[Proof of Theorem \protect\ref{bbsv}]
Results a) (i) \ and b) (i) are known. Let us consider a) (ii). Suppose that 
$\sum\limits_{\nu =0}^{\infty }\frac{(\nu +1)^{r}}{\nu +1}A_{\nu }\left(
f\right) _{\mathcal{C}(\boldsymbol{R})}<\infty $ and $k\in \left\{
1,2,\cdots ,r\right\} $. Then, using Nikolskii inequality, one gets 
\begin{eqnarray*}
\Vert f^{\left( k\right) }\Vert _{\mathcal{C}(\boldsymbol{R})}
&=&\lim\limits_{\sigma \rightarrow \infty }\Vert J\left( f^{\left( k\right)
},\frac{\sigma }{2}\right) \Vert _{\mathcal{C}(\boldsymbol{R}%
)}=\lim\limits_{\sigma \rightarrow \infty }\Vert \left( J\left( f,\frac{%
\sigma }{2}\right) \right) ^{\left( k\right) }\Vert _{\mathcal{C}(%
\boldsymbol{R})} \\
&\leq &\frac{\pi ^{k}}{2^{k}}\frac{\underset{\left\vert h\right\vert \leq
\delta }{\sup }\left\Vert \left( I-\tilde{T}_{h}\right) ^{k}\left( J\left( f,%
\frac{\sigma }{2}\right) \right) \right\Vert _{\mathcal{C}(\boldsymbol{R})}}{%
\delta ^{k}}\leq \frac{\pi ^{k}}{2^{k}}\frac{2^{k}c_{8}\left( k\right)
\Omega _{k}\left( J\left( f,\frac{\sigma }{2}\right) ,\delta \right) _{%
\mathcal{C}(\boldsymbol{R})}}{\delta ^{k}} \\
&\leq &\left( 1+2^{2k-1}\right) 2^{k+2}\pi ^{k}c_{8}\left( k\right)
\sum\limits_{\nu =0}^{\lfloor 1/\delta \rfloor }\frac{(\nu +1)^{k}}{\nu +1}%
A_{\nu }\left( J\left( f,\frac{\sigma }{2}\right) \right) _{\mathcal{C}(%
\boldsymbol{R})} \\
&\leq &\left( 1+2^{2k-1}\right) 2^{k+2}\pi ^{k}c_{8}\left( k\right)
\sum\limits_{\nu =0}^{\infty }\frac{(\nu +1)^{r}}{\nu +1}A_{\nu }\left(
f\right) _{\mathcal{C}(\boldsymbol{R})}.
\end{eqnarray*}

Note that (ii) b) is follow from (i) b).
\end{proof}

\begin{proof}[Proof of Theorem \protect\ref{march}]
(i) follows from properties of modulus of smoothness. We consider Marchaud
type inequality (ii). CLet $0<t<1/2$. Assume that $2^{m-1}\leq \frac{1}{t}%
<2^{m}$ for some $m\in \mathbb{N}$. Then

\begin{equation*}
\Omega _{r}(f,t)_{\mathcal{C}(\boldsymbol{R})}\leq \left( 1+2^{2r-1}\right)
2^{1-r}t^{r}\left( \sum\limits_{\nu =1}^{m}2^{\nu r}A_{2^{\nu -1}}\left(
f\right) _{\mathcal{C}(\boldsymbol{R})}+A_{0}\left( f\right) _{\mathcal{C}(%
\boldsymbol{R})}\right)
\end{equation*}%
\begin{equation*}
\leq \frac{5\pi }{2}\left( 1+2^{2r-1}\right) 2^{r+2k}c_{8}\left( r+k\right)
t^{r}\left( A_{0}\left( f\right) _{\mathcal{C}(\boldsymbol{R}%
)}+\sum\limits_{\nu =1}^{m}2^{\nu r}\Omega _{k+r}(f,\frac{1}{2^{\nu }})_{%
\mathcal{C}(\boldsymbol{R})}\right)
\end{equation*}%
\begin{equation*}
\leq \frac{5\pi }{2}\left( 1+2^{2r-1}\right) 2^{2r+3k}c_{8}\left( r+k\right)
t^{r}\left( \Omega _{k+r}(f,\frac{1}{2})_{\mathcal{C}(\boldsymbol{R}%
)}+\sum\limits_{\nu =1}^{m}\int\limits_{2^{-v}}^{2^{-v+1}}\frac{\Omega
_{k+r}(f,u)_{\mathcal{C}(\boldsymbol{R})}}{u^{r+1}}du\right)
\end{equation*}%
\begin{equation*}
\leq \frac{5\pi }{2}\left( 1+2^{2r-1}\right) 2^{2r+3k}c_{8}\left( r+k\right)
t^{r}\left( \Omega _{k+r}(f,\frac{1}{2})_{\mathcal{C}(\boldsymbol{R}%
)}+\int\limits_{2^{-1}}^{2^{-m+1}}\frac{\Omega _{k+r}(f,u)_{\mathcal{C}(%
\boldsymbol{R})}}{u^{r+1}}du\right)
\end{equation*}%
\begin{equation*}
\leq 5\pi \left( 1\text{+}2^{2r-1}\right) 2^{2r+3k}c_{8}\left( r\text{+}%
k\right) t^{r}\left( \int\limits_{1/2}^{1}\frac{\Omega _{k+r}(f,u)_{\mathcal{%
C}(\boldsymbol{R})}}{u^{r+1}}du\text{+}\int\limits_{t}^{1}\frac{\Omega
_{k+r}(f,u)_{\mathcal{C}(\boldsymbol{R})}}{u^{r+1}}du\right)
\end{equation*}%
\begin{equation*}
\leq 10\pi \left( 1+2^{2r-1}\right) 2^{2r+3k}c_{8}\left( r+k\right)
t^{k}\int\limits_{t}^{1}\frac{\Omega _{k+r}(f,u)_{\mathcal{C}(\boldsymbol{R}%
)}}{u^{r+1}}du
\end{equation*}
\end{proof}

Using this section's estimates and Transference Result Theorem \ref{tra}, in
the next section we will give several results on difference operator $%
\left\Vert \left( I-T_{\delta }\right) ^{r}f\right\Vert _{p\left( \cdot
\right) }$ and approximation by IFFD in $L_{p\left( \cdot \right) }.$

\section{Applications on Difference operator and Approximation}

\begin{notation}
Since the $48c_{7}\left( c_{3}\left( p\right) \right) c_{5}\left(
p^{+},c_{3}\left( p\right) \right) $ of (\ref{hurr}) will be used very
frequently in the next parts we will set $c_{10}$:=$c_{10}\left(
p^{+},c_{3}\left( p\right) \right) $:=$48c_{7}\left( c_{3}\left( p\right)
\right) c_{5}\left( p^{+},c_{3}\left( p\right) \right) .$
\end{notation}

\begin{lemma}
\label{lemma1} Let $p\in P^{Log}\left( \boldsymbol{R}\right) $, $r\in 
\mathrm{N}$, and $0<\delta <\infty $. Then%
\begin{equation*}
\left\Vert \left( I-T_{\delta }\right) ^{r}f\right\Vert _{p\left( \cdot
\right) }\leq c_{10}^{r}2^{-r}\delta ^{r}\left\Vert f^{\left( r\right)
}\right\Vert _{p\left( \cdot \right) }\text{,}\quad f\in W_{L_{p\left( \cdot
\right) }}^{r}
\end{equation*}%
hold.
\end{lemma}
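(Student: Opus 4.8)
The plan is to run the uniform-norm inequality of Theorem \ref{xv}(5) through the transference machinery: I will compare $(I-T_\delta)^r f$ with $f^{(r)}$ by comparing the associated functions $F_{(I-T_\delta)^r f}$ and $F_{f^{(r)}}$ of (\ref{efef}) in $C(\boldsymbol{R})$, and then invoke Theorem \ref{tra}. The whole argument rests on two identities for the auxiliary function $F_\psi$, valid when $\psi$ is smooth enough: first, that applying $I-T_\delta$ in the single variable $u$ commutes with forming $F$, so that $(I-T_\delta)^r F_f = F_{(I-T_\delta)^r f}$; and second, that differentiation in $u$ passes to the subscript, $\frac{d^r}{du^r}F_f = F_{f^{(r)}}$.

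For the first identity I would note that, for locally integrable $\psi$, averaging $F_\psi$ over $u\in[0,\delta]$ gives $\frac{1}{\delta}\int_0^\delta F_\psi(u+t)\,dt = \int_{\boldsymbol{R}}(T_\delta \mathsf{S}_1\psi)(x+u)\,|G(x)|\,dx$; since $\mathsf{S}_1$ and $T_\delta$ are both convolutions against characteristic functions, Fubini gives $T_\delta\mathsf{S}_1\psi = \mathsf{S}_1 T_\delta\psi$ pointwise, whence this equals $F_{T_\delta\psi}(u)$. The key point is that this commutation happens at the level of functions, before any norm is taken, so the non-translation-invariance of $L_{p(\cdot)}$ is irrelevant here. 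Because $F$ is linear in its subscript, iterating yields $(I-T_\delta)^r F_f = F_{(I-T_\delta)^r f}$. For the second identity I would differentiate (\ref{efef}) under the integral sign — legitimate since $G\in C_c^\infty$ has compact support — and use Theorem \ref{Fu}(b) iterated, $(\mathsf{S}_1 f)^{(r)} = \mathsf{S}_1 f^{(r)}$, to get $\frac{d^r}{du^r}F_f = F_{f^{(r)}}$; moreover $F_f \in C^r(\boldsymbol{R})$ because each $F_{f^{(k)}}$ ($0\le k\le r$) is bounded and uniformly continuous by Theorem \ref{Fu}(a). This verification of the transfer identities and of the $C^r$-regularity of $F_f$ is the main obstacle; once it is in place the rest is bookkeeping.

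With both identities available I would apply Theorem \ref{xv}(5) to $F_f\in C^r(\boldsymbol{R})$ on $\Omega=\boldsymbol{R}$, obtaining $\|(I-T_\delta)^r F_f\|_{C(\boldsymbol{R})} \le 2^{-r}\delta^r\|F_f^{(r)}\|_{C(\boldsymbol{R})}$, that is, $\|F_{(I-T_\delta)^r f}\|_{C(\boldsymbol{R})} \le 2^{-r}\delta^r\|F_{f^{(r)}}\|_{C(\boldsymbol{R})}$. Feeding this into Theorem \ref{tra} with $\mathbf{c}_1 = 2^{-r}\delta^r$ (its conclusion scales linearly in the comparison constant, so a $\delta$-dependent value is harmless) gives $\|(I-T_\delta)^r f\|_{p(\cdot)} \le c_{10}\,2^{-r}\delta^r\|f^{(r)}\|_{p(\cdot)}$. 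Since $c_{10}\ge 1$, this already implies the claimed bound with $c_{10}^r$. If instead one prefers to reproduce the constant $c_{10}^r$ exactly, the same mechanism used at first order gives $\|(I-T_\delta)g\|_{p(\cdot)} \le 2^{-1}c_{10}\delta\|g'\|_{p(\cdot)}$ for $g\in W_{L_{p(\cdot)}}^1$, and one iterates this $r$ times along $g=(I-T_\delta)^{r-1-j}f^{(j)}$, $0\le j\le r-1$, using $[(I-T_\delta)^s\psi]' = (I-T_\delta)^s\psi'$ (cf. Lemma \ref{deg1}) and Corollary \ref{coroL} to see that each such $g$ lies in $W_{L_{p(\cdot)}}^1$.
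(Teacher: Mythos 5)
Your proposal is correct, but its primary route is organized differently from the paper's. The paper's own proof of Lemma \ref{lemma1} is two lines: it quotes from \cite{AG} the first-order estimate $\Vert (I-T_{\delta })f\Vert _{p(\cdot )}\leq 2^{-1}c_{10}\delta \Vert f^{\prime }\Vert _{p(\cdot )}$ (its inequality (\ref{eqn6*})) and iterates it $r$ times using $[(I-T_{\delta })^{s}\psi ]^{\prime }=(I-T_{\delta })^{s}\psi ^{\prime }$ --- which is precisely your fallback argument at the end, except that you re-derive the first-order estimate by transference instead of citing it. Your main route --- applying Theorem \ref{xv}(5) to $F_{f}$ and then Theorem \ref{tra} with $\mathbf{c}_{1}=2^{-r}\delta ^{r}$ --- is a genuinely different, one-shot organization, and it works: the constant $c_{8}(\mathbf{c}_{1},p^{+},c_{3}(p))$ in Theorem \ref{tra} is linear in $\mathbf{c}_{1}$, so a $\delta $-dependent comparison constant is harmless, and the identities you verify, $(I-T_{\delta })^{r}F_{f}=F_{(I-T_{\delta })^{r}f}$, $(F_{f})^{(r)}=F_{f^{(r)}}$ and $F_{f}\in C^{r}(\boldsymbol{R})$ for $f\in W_{L_{p(\cdot )}}^{r}$, are exactly the ones the paper itself asserts without proof when it runs the same machinery in Theorem \ref{teo1}; your Fubini and differentiation-under-the-integral justifications (valid because $G\in C_{c}^{\infty }$ has compact support and $\mathsf{S}_{1}f^{(k)}$ is continuous) are if anything more careful than the paper's. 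What the one-shot transference buys is a sharper constant, $c_{10}2^{-r}\delta ^{r}$ in place of $c_{10}^{r}2^{-r}\delta ^{r}$, and since $c_{10}=48c_{7}(c_{3}(p))c_{5}(p^{+},c_{3}(p))\geq 1$ this implies the lemma as stated; what the paper's iteration buys is brevity and the literal constant $c_{10}^{r}2^{-r}$. One caveat, shared by your iterative variant and by the paper itself: applying the first-order inequality to $g=(I-T_{\delta })^{r-1-j}f^{(j)}$ tacitly treats the intermediate derivatives $f^{(j)}$, $1\leq j\leq r-1$, as members of $L_{p(\cdot )}$, whereas the paper's definition of $W_{L_{p(\cdot )}}^{r}$ only places $f$ and $f^{(r)}$ there; your transference route is actually cleaner on this point, since only $\Vert F_{f^{(r)}}\Vert _{C(\boldsymbol{R})}$ enters the final bound.
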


We will use notation $K_{r}\left( f,\delta ,p\left( \cdot \right) \right)
:=K_{r}\left( f,\delta ,L_{p\left( \cdot \right) }\right) _{L_{p\left( \cdot
\right) }}$ for $r\in \mathrm{N}$, $p\in P^{Log}\left( B\right) $, $\delta
>0 $ and $f\in L_{p\left( \cdot \right) }\left( B\right) $.

As a corollary of Transference Result we can obtain the following lemma.

\begin{lemma}
\label{bukun} Let $0<h\leq \delta <\infty $, $p\in P^{Log}\left( \boldsymbol{%
R}\right) $ and $f\in L_{p\left( \cdot \right) }$. Then%
\begin{equation}
\left\Vert \left( I-T_{h}\right) f\right\Vert _{p\left( \cdot \right) }\leq
c_{8}\left( 72,p^{+},c_{3}\left( p\right) \right) \left\Vert \left(
I-T_{\delta }\right) f\right\Vert _{p\left( \cdot \right) }  \label{bukunn}
\end{equation}%
holds.
\end{lemma}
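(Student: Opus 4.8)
The plan is to read this off the Transference Result, Theorem \ref{tra}, applied to the pair $\tilde{f}:=(I-T_{h})f$ and $\tilde{g}:=(I-T_{\delta})f$ with the absolute constant $\mathbf{c}_{1}:=72$. With this choice $(\ref{trak})$ becomes exactly $(\ref{bukunn})$, the desired inequality, with the advertised constant $c_{8}(72,p^{+},c_{3}(p))$. Both $\tilde{f}$ and $\tilde{g}$ indeed lie in $L_{p(\cdot)}$, since $T_{h}$ and $T_{\delta}$ map $L_{p(\cdot)}$ into itself by $(\ref{hurr})$ of Corollary \ref{coroL}. So the whole task reduces to checking the single hypothesis of Theorem \ref{tra}, namely $\left\Vert F_{\tilde{f}}\right\Vert _{C(\boldsymbol{R})}\le 72\,\left\Vert F_{\tilde{g}}\right\Vert _{C(\boldsymbol{R})}$ for every admissible $G\in L_{p^{\prime}(\cdot)}\cap C_{c}^{\infty}$ with $\left\Vert G\right\Vert _{p^{\prime}(\cdot)}\le 1$.

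The key step I would establish is the identity $F_{(I-T_{\eta})f}=(I-T_{\eta})F_{f}$, valid for each such $G$ and every $\eta>0$. Writing out the definition $(\ref{efef})$ and using Fubini's theorem — legitimate by H\"{o}lder's inequality $(\ref{holder})$, since $f\in L_{p(\cdot)}$, $G\in L_{p^{\prime}(\cdot)}$ and the Steklov averages run only over bounded intervals — one pulls the forward average defining $T_{\eta}$ outside the $x$-integration:
\[
\int_{\boldsymbol{R}}(\mathsf{S}_{1}T_{\eta}f)(x+u)\,|G(x)|\,dx=\frac{1}{\eta}\int_{0}^{\eta}\left(\int_{\boldsymbol{R}}(\mathsf{S}_{1}f)(x+u+t)\,|G(x)|\,dx\right)dt=\frac{1}{\eta}\int_{0}^{\eta}F_{f}(u+t)\,dt=T_{\eta}F_{f}(u).
\]
Subtracting this from $F_{f}(u)=\int_{\boldsymbol{R}}(\mathsf{S}_{1}f)(x+u)|G(x)|\,dx$ yields $F_{(I-T_{\eta})f}(u)=(I-T_{\eta})F_{f}(u)$; equivalently, $\mathsf{S}_{1}$ and $T_{\eta}$ are commuting convolution operators. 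In particular $F_{\tilde{f}}=(I-T_{h})F_{f}$ and $F_{\tilde{g}}=(I-T_{\delta})F_{f}$, so the variable-exponent difference operators have been transformed into the ordinary difference operators acting on the single scalar function $F_{f}$.

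Now I would feed $F_{f}$ into the uniform-norm estimate already proved. By Theorem \ref{Fu}(a) the function $F_{f}$ is bounded and uniformly continuous on $\boldsymbol{R}$, hence $F_{f}\in C(\boldsymbol{R})$ with finite sup-norm, and Proposition \ref{DIcor} applies with $\Omega=\boldsymbol{R}$ (recall $0<h\le\delta<\infty$):
\[
\left\Vert F_{\tilde{f}}\right\Vert _{C(\boldsymbol{R})}=\left\Vert (I-T_{h})F_{f}\right\Vert _{C(\boldsymbol{R})}\le 72\left\Vert (I-T_{\delta})F_{f}\right\Vert _{C(\boldsymbol{R})}=72\left\Vert F_{\tilde{g}}\right\Vert _{C(\boldsymbol{R})}.
\]
Since this holds for every admissible $G$, the hypothesis of Theorem \ref{tra} is satisfied with $\mathbf{c}_{1}=72$, and $(\ref{trak})$ delivers $(\ref{bukunn})$.

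The only genuinely nontrivial point is the commutation/Fubini identity $F_{(I-T_{\eta})f}=(I-T_{\eta})F_{f}$: it is what converts an estimate about the non-translation-invariant operator $(I-T_{\eta})$ on $L_{p(\cdot)}$ into an estimate about the ordinary difference operator on the uniformly continuous scalar function $F_{f}$, so that the scalar inequality of Proposition \ref{DIcor} can be transferred back by Theorem \ref{tra}. I expect the rest to be purely mechanical, with no fresh $L_{p(\cdot)}$-estimate required beyond the boundedness already recorded in Corollary \ref{coroL}.
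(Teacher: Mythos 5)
Your proof is correct and is essentially the paper's own approach: the paper presents Lemma \ref{bukun} precisely as ``a corollary of Transference Result,'' i.e.\ Theorem \ref{tra} with $\mathbf{c}_{1}=72$ fed by the uniform-norm inequality of Proposition \ref{DIcor}, using the commutation identity $F_{(I-T_{\eta})f}=(I-T_{\eta})F_{f}$ that the paper invokes implicitly in the analogous proofs of Theorems \ref{teo1} and \ref{Ters}. Your write-up simply makes explicit (via Fubini and Theorem \ref{Fu}(a)) the details the paper leaves unstated, so there is no gap and no genuinely different route.
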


In the following theorem we show that $K$-functional $K_{r}(f,\delta
,p\left( \cdot \right) )_{{p(\cdot )}}$ and $\Omega _{r}(f,\delta )_{p(\cdot
)}$ are equivalent.

\begin{theorem}
\label{teo1} Let $p(\cdot )\in P^{Log}\left( \boldsymbol{R}\right) $. If $%
L_{p\left( \cdot \right) }$, then the K-functional\emph{\ }$K_{r}\left(
f,\delta ,p\left( \cdot \right) \right) _{p\left( \cdot \right) }$\ and the
modulus $\Omega _{r}\left( f,\delta \right) _{p\left( \cdot \right) }$,\ are
equivalent, namely, 
\begin{eqnarray*}
\frac{1}{48c_{7}\left( c_{3}\left( p\right) \right) 2^{r}c_{5}\left(
p^{+},c_{3}\left( p\right) \right) } &\leq &\frac{K_{r}\left( f,\delta
,p\left( \cdot \right) \right) _{p\left( \cdot \right) }}{\Omega
_{r}(f,\delta )_{p(\cdot )}} \\
&\leq &48c_{7}\left( c_{3}\left( p\right) \right) \left\{
(2r)^{r}+2^{r}(34)^{r}\right\} c_{5}\left( p^{+},c_{3}\left( p\right)
\right) .
\end{eqnarray*}
\end{theorem}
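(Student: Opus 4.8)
The plan is to transfer the uniform-norm equivalence of Theorem \ref{DI} to $L_{p(\cdot)}$ by means of the Transference Result, Theorem \ref{tra}. The engine of the whole argument is the elementary intertwining identity
\[
F_{T_\delta h}=T_\delta F_h,\qquad F_{h'}=\left(F_h\right)',
\]
valid for $h\in L_{p(\cdot)}$ (and for $h\in W^{1}_{p(\cdot)}$ in the second case), where on the right $T_\delta$ and $\frac{d}{du}$ act on the variable of the bounded uniformly continuous function $F_h$ of (\ref{efef}). Both identities follow from Fubini's theorem together with the facts that $\mathsf{S}_1$ commutes with $T_\delta$ and with differentiation (Theorem \ref{Fu}(b)). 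Iterating gives $F_{(I-T_\delta)^r h}=(I-T_\delta)^r F_h$ and, for the operator $h\mapsto h_\ast:=\sum_{l=1}^r(-1)^{l-1}\binom{r}{l}T_\delta^{2rl}h$, the identities $F_{h_\ast}=\left(F_h\right)_\ast$ and $F_{h_\ast^{(r)}}=\left(F_h\right)_\ast^{(r)}$. Consequently every operator inequality in $C\left(\boldsymbol{R}\right)$ from Section 2 becomes, after being applied to the profile $F_f$, a bound $\|F_a\|_{C(\boldsymbol{R})}\le \mathbf{c}_1\|F_b\|_{C(\boldsymbol{R})}$, which Theorem \ref{tra} upgrades to $\|a\|_{p(\cdot)}\le 48c_7(c_3(p))\,\mathbf{c}_1\,c_5(p^+,c_3(p))\|b\|_{p(\cdot)}$.

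For the left inequality I fix a competitor $g\in W^{r}_{p(\cdot)}$ and split $\Omega_r(f,\delta)_{p(\cdot)}\le \|(I-T_\delta)^r(f-g)\|_{p(\cdot)}+\|(I-T_\delta)^r g\|_{p(\cdot)}$. To the first summand I transfer $\|(I-T_\delta)^r\varphi\|_{C(\boldsymbol{R})}\le 2^r\|\varphi\|_{C(\boldsymbol{R})}$ of Theorem \ref{xv}(4) applied to $\varphi=F_{f-g}$, getting $\|(I-T_\delta)^r(f-g)\|_{p(\cdot)}\le 48c_7 2^r c_5\|f-g\|_{p(\cdot)}$; to the second I transfer $\|(I-T_\delta)^r\varphi\|_{C(\boldsymbol{R})}\le 2^{-r}\delta^r\|\varphi^{(r)}\|_{C(\boldsymbol{R})}$ of Theorem \ref{xv}(5) applied to $\varphi=F_g$, which via $F_{g^{(r)}}=\left(F_g\right)^{(r)}$ yields $\|(I-T_\delta)^r g\|_{p(\cdot)}\le 48c_7 2^{-r}c_5\,\delta^r\|g^{(r)}\|_{p(\cdot)}$. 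Since $2^{-r}\le 2^r$, summing and taking the infimum over $g\in W^{r}_{p(\cdot)}$ gives $\Omega_r(f,\delta)_{p(\cdot)}\le 48c_7 2^r c_5\,K_r(f,\delta,p(\cdot))_{p(\cdot)}$, i.e. the asserted lower bound for the ratio.

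For the right inequality I use the explicit near-best element from the proof of Theorem \ref{DI}, namely $g:=f_\ast=\sum_{l=1}^r(-1)^{l-1}\binom{r}{l}T_\delta^{2rl}f$, so that $f-g=(I-T_\delta^{2r})^r f$. First I verify $g\in W^{r}_{p(\cdot)}$: each block $T_\delta^{2rl}$ with $2rl\ge 2r>r$, together with Theorem \ref{Fu}(b) and the uniform boundedness of the shifted Steklov operators $\mathcal{S}_{\delta,\tau}$ (Theorem \ref{stekRRR} and Corollary \ref{coroL}), guarantees that $g$ has $r-1$ continuous derivatives with $g^{(r-1)}$ absolutely continuous and $g^{(r)}\in L_{p(\cdot)}$. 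Then I transfer the two inequalities established inside the proof of Theorem \ref{DI}, namely $\|(I-T_\delta^{2r})^r\varphi\|_{C(\boldsymbol{R})}\le (2r)^r\|(I-T_\delta)^r\varphi\|_{C(\boldsymbol{R})}$ and $\delta^r\|\varphi_\ast^{(r)}\|_{C(\boldsymbol{R})}\le 2^r 34^r\|(I-T_\delta)^r\varphi\|_{C(\boldsymbol{R})}$, applied to $\varphi=F_f$; via Theorem \ref{tra} these become $\|f-g\|_{p(\cdot)}\le 48c_7(2r)^r c_5\,\Omega_r(f,\delta)_{p(\cdot)}$ and $\delta^r\|g^{(r)}\|_{p(\cdot)}\le 48c_7 2^r 34^r c_5\,\Omega_r(f,\delta)_{p(\cdot)}$. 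Adding and using $(2r)^r+2^r 34^r=2^r(r^r+34^r)$ gives $K_r(f,\delta,p(\cdot))_{p(\cdot)}\le 48c_7\{(2r)^r+2^r 34^r\}c_5\,\Omega_r(f,\delta)_{p(\cdot)}$, the asserted upper bound.

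The main obstacle is not the bookkeeping of constants but the justification of the intertwining identities and, with them, the verification that the hypotheses of Theorem \ref{tra} hold at each step: in particular that the profiles $F_f$ and $F_g$ are genuinely $C^r\left(\boldsymbol{R}\right)$ (which rests on Theorem \ref{Fu}), and that the explicit $g$ really belongs to $W^{r}_{p(\cdot)}$ despite the non-translation-invariance of $L_{p(\cdot)}$. This last regularity point is precisely where the uniform boundedness of the one-sided and shifted Steklov operators is indispensable, since the naive derivative $(T_\delta h)'=\delta^{-1}\left(h(\cdot+\delta)-h\right)$ need not lie in $L_{p(\cdot)}$.
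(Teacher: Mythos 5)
Your proposal is correct and follows essentially the same route as the paper's own proof: transference via the profiles $F_f$ (Theorem \ref{tra}), the intertwining identities $F_{(I-T_{\delta})^{r}h}=(I-T_{\delta})^{r}F_{h}$ and $F_{g^{(r)}}=(F_{g})^{(r)}$, the uniform-norm estimates of Theorem \ref{DI} (including the two inequalities with constants $(2r)^{r}$ and $2^{r}(34)^{r}$ from its proof), and the same explicit near-best element $g=\sum_{l=1}^{r}(-1)^{l-1}\binom{r}{l}T_{\delta }^{2rl}f$, arriving at identical constants. The only (welcome) additions are presentational — you apply Theorem \ref{tra} as a black box to each pair of functions rather than unfolding its two constituent bounds, and you explicitly verify $g\in W_{L_{p(\cdot )}}^{r}$ via the shifted Steklov operators of Theorem \ref{stekRRR}, a point the paper leaves implicit.
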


\begin{theorem}
\label{rem1} For $p(\cdot )\in P^{Log}\left( \boldsymbol{R}\right) $, $%
f,g\in L_{p\left( \cdot \right) }$ and $\delta >0,$ the modulus of
smoothness $\Omega _{r}\left( f,\delta \right) _{p\left( \cdot \right) }$,
has the following properties:

\begin{enumerate}
\item $\Omega _{r}\left( f,\delta \right) _{p\left( \cdot \right) }$ is
non-negative; non-decreasing function of $\delta $;

\item For $f,g\in L_{p\left( \cdot \right) }$ and $\delta >0,$ 
\begin{equation}
\Omega _{r}(f+g,\delta )_{p(\cdot )}\leq \Omega _{r}(f,\delta )_{p(\cdot
)}+\Omega _{r}(g,\delta )_{p(\cdot )}.  \label{P2}
\end{equation}

\item For $f\in L_{p\left( \cdot \right) }$, 
\begin{equation}
\lim\limits_{\delta \rightarrow 0}\Omega _{r}(f,\delta )_{p(\cdot )}=0.
\label{P3}
\end{equation}
\end{enumerate}
\end{theorem}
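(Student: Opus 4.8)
The plan is to treat the three properties in increasing order of difficulty. Non-negativity of $\Omega_r(f,\delta)_{p(\cdot)}=\left\Vert (I-T_\delta)^r f\right\Vert_{p(\cdot)}$ is immediate, since $\left\Vert \cdot\right\Vert_{p(\cdot)}$ is a norm; and subadditivity in $f$ is equally direct: because $(I-T_\delta)^r$ is a linear operator, $(I-T_\delta)^r(f+g)=(I-T_\delta)^r f+(I-T_\delta)^r g$, so the triangle inequality for $\left\Vert \cdot\right\Vert_{p(\cdot)}$ gives \eqref{P2}. The two substantive points are the monotonicity in $\delta$ in item (1) and the limit relation \eqref{P3}, and for the former I would route the argument through the uniform-norm estimates of Section 2 by means of the transference machinery of Theorem \ref{tra}.

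For the monotonicity I would first record the structural identity
\begin{equation*}
F_{(I-T_\delta)^r f}(u)=(I-T_\delta)^r F_f(u),\qquad u\in\boldsymbol{R},
\end{equation*}
valid for every admissible $G$ in the definition \eqref{efef}. This holds because the Steklov operator $\mathsf{S}_{1}$ defining $F_f$ commutes with $T_\delta$ (both are convolutions against indicators), so $\mathsf{S}_{1}(I-T_\delta)^r f=(I-T_\delta)^r\mathsf{S}_{1} f$, after which Fubini's theorem moves each factor $T_\delta$ acting in the $x$-variable onto the averaging variable $u$, using the elementary relation $\int_{\boldsymbol{R}}(T_\delta\psi)(x+u)|G(x)|\,dx=(T_\delta\Phi_\psi)(u)$ with $\Phi_\psi(v):=\int_{\boldsymbol{R}}\psi(x+v)|G(x)|\,dx$, applied to $\psi=\mathsf{S}_{1}f$ (for which $\Phi_\psi=F_f$). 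Since $F_f\in C(\boldsymbol{R})$ by Theorem \ref{Fu}(a), I can then invoke the monotonicity of the uniform-norm modulus (Theorem \ref{xv}(1)) to get $\left\Vert F_{(I-T_{\delta_1})^r f}\right\Vert_{C(\boldsymbol{R})}\le\left\Vert F_{(I-T_{\delta_2})^r f}\right\Vert_{C(\boldsymbol{R})}$ whenever $0<\delta_1\le\delta_2$, and feed this into Theorem \ref{tra} with $f_1=(I-T_{\delta_1})^r f$ and $g_1=(I-T_{\delta_2})^r f$ to conclude $\Omega_r(f,\delta_1)_{p(\cdot)}\le c_8(1,p^+,c_3(p))\,\Omega_r(f,\delta_2)_{p(\cdot)}$; the case $r=1$ is exactly Lemma \ref{bukun}.

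For the limit \eqref{P3} I would use a density argument rather than transference, which is cleaner here. If $f\in W_{L_{p(\cdot)}}^{r}$, Lemma \ref{lemma1} gives at once $\Omega_r(f,\delta)_{p(\cdot)}\le c_{10}^r 2^{-r}\delta^r\left\Vert f^{(r)}\right\Vert_{p(\cdot)}\to 0$ as $\delta\to0$. For a general $f\in L_{p(\cdot)}$ and $\varepsilon>0$, pick a smooth $g$ (for instance $g\in C_c^\infty$, dense in $L_{p(\cdot)}$ for $p\in P^{Log}(\boldsymbol{R})$) with $\left\Vert f-g\right\Vert_{p(\cdot)}<\varepsilon$. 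Subadditivity gives $\Omega_r(f,\delta)_{p(\cdot)}\le\Omega_r(f-g,\delta)_{p(\cdot)}+\Omega_r(g,\delta)_{p(\cdot)}$, where the first term is controlled \emph{uniformly in $\delta$} by the last estimate of Corollary \ref{coroL}, namely $\Omega_r(f-g,\delta)_{p(\cdot)}\le(1+c_{10})^r\left\Vert f-g\right\Vert_{p(\cdot)}<(1+c_{10})^r\varepsilon$, while the second tends to $0$ by the smooth case. Letting $\delta\to0$ and then $\varepsilon\to0$ yields \eqref{P3}.

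The main obstacle is the limit \eqref{P3}: the smoothing step is where the real work sits, and it hinges on two facts that must be in place—density of a smooth subclass in $L_{p(\cdot)}$ (valid under the log-Hölder hypothesis $p\in P^{Log}(\boldsymbol{R})$) and, crucially, a bound on the error $\Omega_r(f-g,\delta)_{p(\cdot)}$ that is uniform in $\delta$, which is precisely what the uniform boundedness of $(I-T_\delta)^r$ in Corollary \ref{coroL} supplies. A secondary delicate point is that the transference identity and the application of Theorem \ref{tra} in item (1) must hold for \emph{every} admissible $G$ simultaneously, so that the near-extremal $\tilde G_\varepsilon$ used in the lower bound of Theorem \ref{tra} is covered; this is automatic once the displayed identity is verified for arbitrary $G$.
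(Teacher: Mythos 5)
Your handling of non-negativity and of \eqref{P2} coincides with the paper's (both are immediate from linearity of $(I-T_{\delta })^{r}$ and the triangle inequality). For \eqref{P3}, however, you take a genuinely different route: the paper disposes of it by citing \cite[Theorem 10.1]{Dit} and \cite[Lemma 2]{Israfil}, whereas you derive it entirely from the paper's own machinery --- density of $C_{c}^{\infty }\subset W_{L_{p\left( \cdot \right) }}^{r}$ in $L_{p\left( \cdot \right) }$ (valid since $p^{+}<\infty $), Lemma \ref{lemma1} on the smooth part, and the $\delta $-uniform bound $\Omega _{r}(f-g,\delta )_{p(\cdot )}\leq \left( 1+c_{10}\right) ^{r}\left\Vert f-g\right\Vert _{p\left( \cdot \right) }$ of Corollary \ref{coroL} on the error. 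This argument is correct, and you correctly identified the one point it hinges on: the error bound must be uniform in $\delta $ so that the two limits can be interchanged. What your version buys is self-containedness (no appeal to external strong-converse results); what the paper's citation buys is brevity.

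Where your proposal falls short of the statement is item (1). Your commutation identity $F_{(I-T_{\delta })^{r}f}=(I-T_{\delta })^{r}F_{f}$ is legitimate --- it is the same identity the paper uses in proving Theorem \ref{teo1} --- but transference through Theorem \ref{tra} inserts the constant $c_{8}\left( \mathbf{c}_{1},p^{+},c_{3}\left( p\right) \right) =48c_{7}\left( c_{3}\left( p\right) \right) \mathbf{c}_{1}c_{5}\left( p^{+},c_{3}\left( p\right) \right) >1$ by its very construction, so the most your method can ever deliver is the quasi-monotonicity $\Omega _{r}\left( f,\delta _{1}\right) _{p\left( \cdot \right) }\leq c_{8}\,\Omega _{r}\left( f,\delta _{2}\right) _{p\left( \cdot \right) }$, which you state honestly but which is strictly weaker than the claimed \emph{non-decreasing} property; no refinement of the transference step can close this. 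Moreover, the input you feed it, Theorem \ref{xv}(1), is itself false as literally stated: for $f=\sin $ one computes $\left\Vert \left( I-T_{\delta }\right) \sin \right\Vert _{C\left( \boldsymbol{R}\right) }=\left( 1-\frac{\sin 2\phi }{\phi }+\frac{\sin ^{2}\phi }{\phi ^{2}}\right) ^{1/2}$ with $\phi =\delta /2$, which is about $1.23$ at $\delta =3\pi /2$ but equals $1$ at $\delta =2\pi $, so genuine monotonicity fails already in the uniform norm. What is actually provable --- and what the paper itself establishes elsewhere via the $K$-functional rather than via Theorem \ref{xv}(1), namely Lemma \ref{bukun} and Corollary \ref{MMM}(ii) --- is quasi-monotonicity with a constant. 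In fairness, the paper's own proof of item (1) is only the assertion that it is ``clearly valid,'' so the defect lies as much in the statement as in your argument; but measured against the statement as written, your proof of (1) has a real gap, and it is not one your method can repair.
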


As a corollary of Theorem \ref{teo1}

\begin{corollary}
\label{co} Let $p(\cdot )\in P^{Log}\left( \boldsymbol{R}\right) $. If $%
\delta ,\lambda \in \left( 0,1\right) $, $f\in L_{p\left( \cdot \right) }$,
then,%
\begin{equation*}
\frac{\Omega _{r}\left( f,\lambda \delta \right) _{p\left( \cdot \right) }}{%
\left( 1\text{+}\lfloor \lambda \rfloor \right) ^{r}\Omega _{r}\left(
f,\delta \right) _{p\left( \cdot \right) }}\leq \left( 48\right)
^{2}c_{7}^{2}\left( c_{3}\left( p\right) \right) 2^{r}c_{5}^{2}\left(
p^{+},c_{3}\left( p\right) \right) \left( (2r)^{r}\text{+}%
2^{r}(34)^{r}\right)
\end{equation*}%
holds.
\end{corollary}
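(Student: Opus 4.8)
The plan is to obtain Corollary~\ref{co} as an immediate consequence of the $K$-functional equivalence furnished by Theorem~\ref{teo1}, combined with the elementary homogeneity of Peetre's $K$-functional in its second argument. I note first that for $\lambda\in(0,1)$ we have $\lfloor\lambda\rfloor=0$, so $(1+\lfloor\lambda\rfloor)^{r}=1$; I keep the factor in the statement because it records precisely the scaling estimate that drives the proof and makes the argument uniform in $\lambda$.

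First I would establish the scaling inequality
\[
K_{r}\left( f,\lambda\delta,p(\cdot)\right)_{p(\cdot)}\le\left(1+\lfloor\lambda\rfloor\right)^{r}K_{r}\left( f,\delta,p(\cdot)\right)_{p(\cdot)}.
\]
This is read off directly from the definition $K_{r}(f,\lambda\delta,p(\cdot))_{p(\cdot)}=\inf_{g\in W_{L_{p(\cdot)}}^{r}}\{\Vert f-g\Vert_{p(\cdot)}+\lambda^{r}\delta^{r}\Vert g^{(r)}\Vert_{p(\cdot)}\}$: since $\lambda^{r}\le(\max\{1,\lambda\})^{r}\le(1+\lfloor\lambda\rfloor)^{r}$ and also $1\le(1+\lfloor\lambda\rfloor)^{r}$, each term in the bracket is bounded by $(1+\lfloor\lambda\rfloor)^{r}$ times the corresponding term at $\delta$, and taking the infimum over $g$ gives the claim. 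For $\lambda\in(0,1)$ this is just the monotonicity $K_{r}(f,\lambda\delta)\le K_{r}(f,\delta)$ coming from $\lambda^{r}\le1$.

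Then I would chain the two halves of Theorem~\ref{teo1}. Applying its lower estimate at the argument $\lambda\delta$ gives $\Omega_{r}(f,\lambda\delta)_{p(\cdot)}\le 48c_{7}(c_{3}(p))2^{r}c_{5}(p^{+},c_{3}(p))\,K_{r}(f,\lambda\delta,p(\cdot))_{p(\cdot)}$; inserting the scaling inequality and then its upper estimate at the argument $\delta$, namely $K_{r}(f,\delta,p(\cdot))_{p(\cdot)}\le 48c_{7}(c_{3}(p))\{(2r)^{r}+2^{r}(34)^{r}\}c_{5}(p^{+},c_{3}(p))\,\Omega_{r}(f,\delta)_{p(\cdot)}$, the two equivalence constants multiply to $(48)^{2}c_{7}^{2}(c_{3}(p))2^{r}c_{5}^{2}(p^{+},c_{3}(p))\{(2r)^{r}+2^{r}(34)^{r}\}$. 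After dividing by $(1+\lfloor\lambda\rfloor)^{r}\Omega_{r}(f,\delta)_{p(\cdot)}$ this is exactly the asserted bound.

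I expect no genuine obstacle: all the analytic work sits inside Theorem~\ref{teo1}, and the single new ingredient is the one-line homogeneity of the $K$-functional. The only points requiring mild care are the degenerate case $\Omega_{r}(f,\delta)_{p(\cdot)}=0$, where monotonicity of $\Omega_{r}$ (Theorem~\ref{rem1}, item~1) forces $\Omega_{r}(f,\lambda\delta)_{p(\cdot)}=0$ as well and the quotient is read as $0$, and keeping the two constants of Theorem~\ref{teo1} in the correct order so that their product reproduces the stated constant verbatim.
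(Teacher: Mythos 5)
Your proof is correct and follows essentially the same route as the paper's: both chain the lower estimate of Theorem~\ref{teo1} applied at $\lambda\delta$, the scaling bound $K_{r}\left( f,\lambda\delta ,p\left( \cdot \right) \right) _{p\left( \cdot \right) }\leq \left( 1+\lfloor \lambda \rfloor \right) ^{r}K_{r}\left( f,\delta ,p\left( \cdot \right) \right) _{p\left( \cdot \right) }$, and the upper estimate of Theorem~\ref{teo1} at $\delta$, so that the two equivalence constants multiply to the stated bound. Your explicit verification of the $K$-functional scaling step and of the degenerate case $\Omega _{r}\left( f,\delta \right) _{p\left( \cdot \right) }=0$ merely makes precise what the paper leaves implicit.
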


\begin{theorem}
\label{jak} Let $p(\cdot )\in P^{Log}\left( \boldsymbol{R}\right) $, $r\in 
\mathrm{N}$, $\sigma >0$ and $f\in L_{p\left( \cdot \right) }$. Then,%
\begin{equation}
A_{\sigma }\left( f\right) _{p\left( \cdot \right) }\leq c_{11}\left\Vert
\left( I-T_{1/\sigma }\right) ^{r}f\right\Vert _{p\left( \cdot \right) }
\label{JJ}
\end{equation}%
with $c_{11}:=c_{11}(r,p^{+},c_{3}\left( p\right) ):=30\pi 8^{r}c_{5}\left(
p^{+},c_{3}\left( p\right) \right) c_{7}\left( c_{3}\left( p\right) \right)
c_{8}\left( r\right) $.
\end{theorem}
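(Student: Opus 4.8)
The plan is to take as the approximating function the de la Vall\'ee Poussin mean $g:=J\left( f,\sigma /2\right) $, which by Remark \ref{rmrk}(i) is an entire function of exponential type $\sigma $, and to estimate $\left\Vert f-g\right\Vert _{p\left( \cdot \right) }$ with the help of the Transference Result. Since $A_{\sigma }\left( f\right) _{p\left( \cdot \right) }\leq \left\Vert f-J\left( f,\sigma /2\right) \right\Vert _{p\left( \cdot \right) }$, it suffices to bound the right-hand side, and I would do this by applying Theorem \ref{tra} to the pair $\left( f-J\left( f,\sigma /2\right) ,\left( I-T_{1/\sigma }\right) ^{r}f\right) $. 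Concretely, once I establish the uniform-norm comparison $\left\Vert F_{f-J\left( f,\sigma /2\right) }\right\Vert _{C\left( \boldsymbol{R}\right) }\leq \mathbf{c}_{1}\left\Vert F_{\left( I-T_{1/\sigma }\right) ^{r}f}\right\Vert _{C\left( \boldsymbol{R}\right) }$ for the functional $F$ of (\ref{efef}), Theorem \ref{tra} returns $\left\Vert f-J\left( f,\sigma /2\right) \right\Vert _{p\left( \cdot \right) }\leq 48c_{7}\mathbf{c}_{1}c_{5}\left\Vert \left( I-T_{1/\sigma }\right) ^{r}f\right\Vert _{p\left( \cdot \right) }$, which is exactly (\ref{JJ}) once $\mathbf{c}_{1}$ is chosen so that $48c_{7}\mathbf{c}_{1}c_{5}=c_{11}$, i.e. $\mathbf{c}_{1}$ is a fixed multiple of $8^{r}c_{8}\left( r\right) $.

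The reduction to a scalar inequality rests on two commutation identities for $F$. Because $\mathsf{S}_{1}$ and $T_{\delta }$ are both averaging operators they commute, so a Fubini interchange in (\ref{efef}) yields $F_{T_{\delta }h}=T_{\delta }F_{h}$ (the right-hand $T_{\delta }$ acting in the variable $u$), whence by linearity $F_{\left( I-T_{1/\sigma }\right) ^{r}f}=\left( I-T_{1/\sigma }\right) ^{r}F_{f}$ and $\left\Vert F_{\left( I-T_{1/\sigma }\right) ^{r}f}\right\Vert _{C\left( \boldsymbol{R}\right) }=\Omega _{r}\left( F_{f},1/\sigma \right) _{C\left( \boldsymbol{R}\right) }$. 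The same interchange against the kernel $\left( \sigma /2\right) \vartheta \left( \left( \sigma /2\right) \cdot \right) $ gives $F_{J\left( f,\sigma /2\right) }=J\left( F_{f},\sigma /2\right) $, hence $F_{f-J\left( f,\sigma /2\right) }=F_{f}-J\left( F_{f},\sigma /2\right) $. Since $F_{f}\in \mathcal{C}\left( \boldsymbol{R}\right) $ by Theorem \ref{Fu}(a), the comparison required above becomes precisely the scalar Jackson estimate $\left\Vert F_{f}-J\left( F_{f},\sigma /2\right) \right\Vert _{C\left( \boldsymbol{R}\right) }\leq \mathbf{c}_{1}\,\Omega _{r}\left( F_{f},1/\sigma \right) _{C\left( \boldsymbol{R}\right) }$, which holds for each admissible weight $G$ in (\ref{efef}) and therefore feeds into Theorem \ref{tra}.

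This scalar estimate is a Jackson inequality in $\mathcal{C}\left( \boldsymbol{R}\right) $ for the concrete approximant $J\left( \cdot ,\sigma /2\right) $ rather than for the best one, and I would derive it by running the argument of Theorem \ref{jksn} with $J\left( \cdot ,\sigma /2\right) $ in place of the best approximating element. For smooth $\psi $, the estimate of Theorem \ref{bbsv}(a)(i) applied at degree $\sigma /2$ (together with the reproduction of type $\sigma /2$ by $J\left( \cdot ,\sigma /2\right) $) bounds $\left\Vert \psi -J\left( \psi ,\sigma /2\right) \right\Vert _{C\left( \boldsymbol{R}\right) }$ by a fixed multiple of $\left( 8^{r}/\sigma ^{r}\right) \left\Vert \psi ^{\left( r\right) }\right\Vert _{C\left( \boldsymbol{R}\right) }$, the factor $8^{r}=4^{r}2^{r}$ being the price of the half-degree reproduction; for general $F_{f}$ one splits $F_{f}-J\left( F_{f},\sigma /2\right) $ through such a $\psi $, uses $\left\Vert J\left( \cdot ,\sigma /2\right) \right\Vert _{C\rightarrow C}\leq 3/2$ from Remark \ref{rmrk}(iii), takes the infimum over $\psi $ to produce $K_{r}\left( F_{f},1/\sigma ,C\left( \boldsymbol{R}\right) \right) _{C\left( \boldsymbol{R}\right) }$, and finally replaces this $K$-functional by $\Omega _{r}\left( F_{f},1/\sigma \right) _{C\left( \boldsymbol{R}\right) }$ through the equivalence (\ref{eqA}). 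This is where the factor $8^{r}c_{8}\left( r\right) $ of $c_{11}$ is produced, and matching the absolute constant to $30\pi $ is a matter of bookkeeping in these three steps.

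Two points must be cleared for the scheme to run. First, Theorem \ref{tra} presupposes $f-J\left( f,\sigma /2\right) \in L_{p\left( \cdot \right) }$, i.e. $J\left( f,\sigma /2\right) \in \mathcal{G}_{\sigma ,p\left( \cdot \right) }$; I would first prove (\ref{JJ}) for $f\in C_{c}^{\infty }$, where $J\left( f,\sigma /2\right) \in L_{1}\left( \boldsymbol{R}\right) \cap L_{\infty }\left( \boldsymbol{R}\right) \subset L_{p\left( \cdot \right) }$, and then pass to general $f\in L_{p\left( \cdot \right) }$ by density, using that $A_{\sigma }\left( \cdot \right) _{p\left( \cdot \right) }$ and $f\mapsto \left\Vert \left( I-T_{1/\sigma }\right) ^{r}f\right\Vert _{p\left( \cdot \right) }$ are Lipschitz in $f$ (the latter by Corollary \ref{coroL}). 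Second, both Fubini interchanges need integrability, which follows from H\"{o}lder's inequality (\ref{holder}), the compact support of $G$, the boundedness in Theorem \ref{stekRR}, and $\vartheta \in L_{1}\left( \boldsymbol{R}\right) $. The \emph{main obstacle} is the scalar inequality of the third paragraph: one must carry out the de la Vall\'ee Poussin version of the Jackson argument and keep the reproduction-degree bookkeeping exact, since it is precisely the half-degree nature of $J\left( \cdot ,\sigma /2\right) $ that turns the $4^{r}$ of Theorem \ref{jksn} into the $8^{r}$ appearing in $c_{11}$.
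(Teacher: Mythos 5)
Your proposal is correct and matches the paper's own proof essentially step for step: the paper likewise takes the de la Vall\'ee Poussin mean at half the target degree as the concrete near-best approximant (it proves the bound for $A_{2\sigma}$ using $V_{\sigma }f=J\left( f,\sigma \right) $ and rescales), transfers to $\mathcal{C}\left( \boldsymbol{R}\right) $ through the functional $F$ of (\ref{efef}) --- i.e.\ the two halves of Theorem \ref{tra} --- and uses the commutations $F_{V_{\sigma }f}=V_{\sigma }F_{f}$ and $F_{\left( I-T_{\delta }\right) ^{r}f}=\left( I-T_{\delta }\right) ^{r}F_{f}$, the reproduction and $3/2$-boundedness of $J$, and the uniform-norm Jackson estimate of Theorem \ref{bbsv}(a)(i) combined with the $K$-functional equivalence (\ref{eqA}), which is exactly where your factor $8^{r}c_{8}\left( r\right) $ comes from. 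The one caveat concerns the constant: the paper's $30\pi $ rests on an arithmetic slip ($24c_{7}\left( 1+\tfrac{3}{2}\right) $ is written as $12c_{7}$), so the honest bookkeeping you defer would land at $150\pi \,8^{r}c_{5}c_{7}c_{8}\left( r\right) $ rather than $30\pi \,8^{r}c_{5}c_{7}c_{8}\left( r\right) $ --- a defect shared with, not introduced beyond, the paper's own argument.
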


Now we present the inverse theorem.

\begin{theorem}
\label{Ters T}Let $p(\cdot )\in P^{Log}\left( \boldsymbol{R}\right) $, $r\in 
\mathrm{N}$, $\delta \in \left( 0,1\right) $ and $f\in L_{p\left( \cdot
\right) }$. Then,%
\begin{equation*}
\Omega _{r}\left( f,\delta \right) _{p(\cdot )}\leq c_{12}\delta ^{r}\left(
A_{0}\left( f\right) _{p\left( \cdot \right) }+\int\limits_{1/2}^{1/\delta
}u^{r-1}A_{u/2}\left( f\right) _{p\left( \cdot \right) }du\right) 
\end{equation*}%
holds with $c_{12}:=c_{12}\left( r,p^{+},c_{3}\left( p\right) \right)
:=c_{13}12c_{7}\left( c_{3}\left( p\right) \right) \left( 1+2^{2r-1}\right)
2^{r}$ where $c_{13}:=c_{13}\left( p^{+},c_{3}\left( p\right) \right)
:=2c_{5}\left( p^{+},c_{3}\left( p\right) \right) \left( 1+72c_{7}\left(
c_{3}\left( p\right) \right) c_{5}\left( p^{+},c_{3}\left( p\right) \right)
\right) .$
\end{theorem}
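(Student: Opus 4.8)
The plan is to avoid reproving an inverse inequality from scratch inside $L_{p(\cdot)}$, and instead to transfer the already-established uniform-norm inverse estimate (\ref{cvrs}) through the auxiliary function $F_{f}$, which is exactly what the Transference Result Theorem \ref{tra} is built for. The starting point is the one-sided bound contained in the proof of Theorem \ref{tra}: for a suitably chosen near-optimal dual function $G=G^{*}\in L_{p^{\prime}(\cdot)}\cap C_{c}^{\infty}$ attached to a given $\phi\in L_{p(\cdot)}$, one has $\left\Vert\phi\right\Vert_{p(\cdot)}\le 24c_{7}(c_{3}(p))\left\Vert F_{\phi}\right\Vert_{C(\boldsymbol{R})}$, while for \emph{every} $h$ the companion upper bound $\left\Vert F_{h}\right\Vert_{C(\boldsymbol{R})}\le 2c_{5}(p^{+},c_{3}(p))\left\Vert h\right\Vert_{p(\cdot)}$ holds. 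I would fix one near-optimal $G^{*}$ for $\phi=(I-T_{\delta})^{r}f$ and build all the functions $F_{\,\cdot\,}$ below with this same $G^{*}$, so that the subscript map is linear.

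The key step is a commutation identity. Since $\mathsf{S}_{1}$ and $T_{\delta}$ are both averaging operators they commute, and moving $(I-T_{\delta})$ past $\mathsf{S}_{1}$ and interchanging the $t$- and $x$-integrations gives $F_{(I-T_{\delta})f}=(I-T_{\delta})_{u}F_{f}$ in the variable $u$; iterating, $F_{(I-T_{\delta})^{r}f}=(I-T_{\delta})_{u}^{r}F_{f}$, so that $\left\Vert F_{(I-T_{\delta})^{r}f}\right\Vert_{C(\boldsymbol{R})}=\Omega_{r}(F_{f},\delta)_{C(\boldsymbol{R})}$. By Theorem \ref{Fu}(a) the function $F_{f}$ lies in $\mathcal{C}(\boldsymbol{R})$, so the uniform-norm inverse inequality (\ref{cvrs}) applies to it, and combining with the one-sided transference bound I obtain
\[
\Omega_{r}(f,\delta)_{p(\cdot)}\le 24c_{7}\,\Omega_{r}(F_{f},\delta)_{C(\boldsymbol{R})}\le 24c_{7}(1+2^{2r-1})2^{r-1}\delta^{r}\Big(A_{0}(F_{f})_{C(\boldsymbol{R})}+\int_{1/2}^{1/\delta}u^{r-1}A_{u}(F_{f})_{C(\boldsymbol{R})}\,du\Big).
\]

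It then remains to dominate the uniform-norm deviations $A_{u}(F_{f})_{C(\boldsymbol{R})}$ by the variable-exponent deviations of $f$. Here I would use the de la Vall\'{e}e Poussin operator as a concrete near-best approximant: $J(f,u/2)\in\mathcal{G}_{u}$, and a second computation gives $F_{J(f,u/2)}=J(F_{f},u/2)$, which (being a band-limited convolution of the band-limited function $\mathsf{S}_{1}J(f,u/2)$) lies in $\mathcal{G}_{u}(\mathcal{C}(\boldsymbol{R}))$. Hence $A_{u}(F_{f})_{C(\boldsymbol{R})}\le\left\Vert F_{f-J(f,u/2)}\right\Vert_{C(\boldsymbol{R})}\le 2c_{5}\left\Vert f-J(f,u/2)\right\Vert_{p(\cdot)}$. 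Since $J$ reproduces $\mathcal{G}_{u/2}$ (Remark \ref{rmrk}(ii)), and since transference applied to the classical bound $\left\Vert J(F_{f},\sigma)\right\Vert_{C(\boldsymbol{R})}\le\frac{3}{2}\left\Vert F_{f}\right\Vert_{C(\boldsymbol{R})}$ (Remark \ref{rmrk}(iii)) through the identity $F_{J(f,\sigma)}=J(F_{f},\sigma)$ yields $\left\Vert J(\cdot,u/2)\right\Vert_{p(\cdot)}\le 72c_{7}c_{5}$, the usual reproduction argument gives $\left\Vert f-J(f,u/2)\right\Vert_{p(\cdot)}\le(1+72c_{7}c_{5})A_{u/2}(f)_{p(\cdot)}$. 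Thus $A_{u}(F_{f})_{C(\boldsymbol{R})}\le c_{13}A_{u/2}(f)_{p(\cdot)}$ with $c_{13}=2c_{5}(1+72c_{7}c_{5})$, and substituting into the display produces exactly $c_{12}=12c_{7}c_{13}(1+2^{2r-1})2^{r}$.

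The main obstacle I expect is the bookkeeping around the two commutation identities together with band-limitedness: one must verify that $\mathsf{S}_{1}$ preserves exponential type (so that $F_{g}=(\mathsf{S}_{1}g)*k$, with $k(w)=|G^{*}|(-w)$, lies in $\mathcal{G}_{u}$ whenever $g\in\mathcal{G}_{u}$) and that the $F$-construction commutes with both $(I-T_{\delta})$ and $J(\cdot,\sigma)$ in the $u$-variable; these are precisely what reduce the whole estimate to the uniform-norm case. A secondary but essential point is the shift from $A_{u}$ to $A_{u/2}$ in the statement: it is forced by taking $J(f,u/2)$ (which reproduces only $\mathcal{G}_{u/2}$) as the approximant, and this route is preferable to invoking existence of genuine best approximants in $L_{p(\cdot)}$, which is not established here.
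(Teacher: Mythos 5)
Your proposal is correct and takes essentially the same approach as the paper's own proof: the one-sided transference bound $\left\Vert (I-T_{\delta })^{r}f\right\Vert _{p(\cdot )}\leq 24c_{7}\left( c_{3}\left( p\right) \right) \left\Vert (I-T_{\delta })^{r}F_{f}\right\Vert _{C\left( \boldsymbol{R}\right) }$ together with the commutation $F_{(I-T_{\delta })^{r}f}=(I-T_{\delta })^{r}F_{f}$, then the uniform-norm inverse estimate (\ref{cvrs}) applied to $F_{f}$, and finally the reduction $A_{u}\left( F_{f}\right) _{C\left( \boldsymbol{R}\right) }\leq c_{13}A_{u/2}\left( f\right) _{p\left( \cdot \right) }$ obtained through the de la Vall\'{e}e Poussin operator, its reproduction property, and the transferred bound $72c_{7}c_{5}$, yielding exactly the paper's constants $c_{12}$ and $c_{13}$. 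Your two deviations are only refinements of rigor (fixing one near-optimal dual function $G^{\ast }$ throughout so the map $h\mapsto F_{h}$ is a single linear functional construction, and taking an infimum over near-best approximants rather than assuming, as the paper does, that a best approximant $g_{\sigma }$ exists in $L_{p\left( \cdot \right) }$); they do not change the argument.
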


In this section we obtain Marchaud inequality.

\begin{theorem}
\label{Ters} Let $r,k\in \mathrm{N}$, $p\in P^{Log}\left( \boldsymbol{R}%
\right) $, $f\in L_{p\left( \cdot \right) }$ and $t\in \left( 0,1/2\right) $%
. Then,%
\begin{equation*}
\Omega _{r}\left( f,t\right) _{p(\cdot )}\leq c_{14}t^{r}\int_{t}^{1}\frac{%
\Omega _{r+k}\left( f,u\right) _{p(\cdot )}}{u^{r+1}}du
\end{equation*}%
holds with $c_{14}:=c_{14}(r,k,p^{+},c_{3}\left( p\right) ):=48c_{7}\left(
c_{3}\left( p\right) \right) C_{9}\left( r,k\right) c_{5}\left(
p^{+},c_{3}\left( p\right) \right) .$
\end{theorem}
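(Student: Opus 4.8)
The plan is to transfer the uniform Marchaud inequality of Theorem \ref{march} from $\mathcal{C}(\boldsymbol{R})$ to $L_{p(\cdot)}$ through the auxiliary function $F_f$ of (\ref{efef}), following the mechanism already used in the proof of the Transference Result (Theorem \ref{tra}). The linchpin is the commutation identity
\[
F_{T_\delta g}=T_\delta F_g,\qquad 0<\delta<\infty,
\]
in which on the right $T_\delta$ acts on the variable $u$ of $F$. This follows from Fubini's theorem and the fact that the Steklov operator $\mathsf{S}_1$, the operator $T_\delta$ and translations are all averaging (convolution) operators and hence commute: writing $(\mathsf{S}_1 T_\delta g)(x+u)=\tfrac{1}{\delta}\int_0^\delta(\mathsf{S}_1 g)(x+u+s)\,ds$ and interchanging the order of integration gives $F_{T_\delta g}(u)=\tfrac{1}{\delta}\int_0^\delta F_g(u+s)\,ds=(T_\delta F_g)(u)$. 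Since $F$ is linear in its argument, iterating yields $F_{(I-T_\delta)^j g}=(I-T_\delta)^j F_g$ for every $j\in\mathrm{N}$, so moduli of smoothness pass freely through the $F$-transform.

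First I would isolate the two one-sided bounds that are already contained in the proof of Theorem \ref{tra}. By Theorem \ref{du} one has, for every $h\in L_{p(\cdot)}$, the lower bound $\Vert h\Vert_{p(\cdot)}\le 24\,c_7(c_3(p))\,\Vert F_h\Vert_{C(\boldsymbol{R})}$, while Hölder's inequality (\ref{holder}) together with the uniform boundedness of $\mathcal{U}_\tau$ from Theorem \ref{stekRR} gives the upper bound $\Vert F_h\Vert_{C(\boldsymbol{R})}\le 2\,c_5(p^+,c_3(p))\,\Vert h\Vert_{p(\cdot)}$. Moreover, by Theorem \ref{Fu}(a) the function $F_f$ is bounded and uniformly continuous, so $F_f\in\mathcal{C}(\boldsymbol{R})$ and the uniform modulus $\Omega_r(F_f,\cdot)_{\mathcal{C}(\boldsymbol{R})}$ is meaningful; this is what makes Theorem \ref{march} applicable to $F_f$.

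The computation then proceeds in three steps. Applying the lower bound to $h=(I-T_t)^r f$ and the commutation identity,
\[
\Omega_r(f,t)_{p(\cdot)}=\Vert (I-T_t)^r f\Vert_{p(\cdot)}\le 24\,c_7(c_3(p))\,\Vert (I-T_t)^r F_f\Vert_{C(\boldsymbol{R})}=24\,c_7(c_3(p))\,\Omega_r(F_f,t)_{\mathcal{C}(\boldsymbol{R})}.
\]
Next, Theorem \ref{march} applied to $F_f\in\mathcal{C}(\boldsymbol{R})$ bounds $\Omega_r(F_f,t)_{\mathcal{C}(\boldsymbol{R})}$ by $C_9(r,k)\,t^r\int_t^1 u^{-r-1}\Omega_{r+k}(F_f,u)_{\mathcal{C}(\boldsymbol{R})}\,du$. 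Finally, for each fixed $u$ the upper bound applied to $h=(I-T_u)^{r+k}f$, again with the commutation identity, yields
\[
\Omega_{r+k}(F_f,u)_{\mathcal{C}(\boldsymbol{R})}=\Vert F_{(I-T_u)^{r+k}f}\Vert_{C(\boldsymbol{R})}\le 2\,c_5(p^+,c_3(p))\,\Omega_{r+k}(f,u)_{p(\cdot)}.
\]
Substituting this into the integral and collecting constants produces the factor $24\,c_7(c_3(p))\cdot C_9(r,k)\cdot 2\,c_5(p^+,c_3(p))=48\,c_7(c_3(p))\,C_9(r,k)\,c_5(p^+,c_3(p))=c_{14}$, which is exactly the asserted inequality; note that this is precisely $c_8$ of Theorem \ref{tra} evaluated at $\mathbf{c}_1=C_9(r,k)$, reflecting that the argument is the transference mechanism run with the uniform Marchaud constant.

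The only genuine analytic work is the commutation identity and the justification of the interchange of integration (legitimate since $\mathsf{S}_1 f$ is locally integrable and $G$ has compact support), after which everything reduces to the bookkeeping of constants already extracted in Theorems \ref{du}, \ref{stekRR}, and \ref{Fu}. I expect the main obstacle to be a minor technical point rather than a conceptual one: justifying the lower bound $\Vert h\Vert_{p(\cdot)}\le 24\,c_7(c_3(p))\,\Vert F_h\Vert_{C(\boldsymbol{R})}$ for the sign-changing function $h=(I-T_t)^r f$, since the corresponding passage in Theorem \ref{tra} is written for nonnegative $h$. This is handled by the duality of Theorem \ref{du}, which for each $h$ supplies an admissible $G$ realizing the norm up to an arbitrary $\varepsilon>0$, so that the evaluation $F_h(0)$ dominates $\tfrac{1}{12c_7(c_3(p))}\Vert h\Vert_{p(\cdot)}-\varepsilon$ before letting $\varepsilon\to0^+$.
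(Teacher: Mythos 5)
Your proposal is correct and follows essentially the same route as the paper's own proof: transfer through $F_f$ using the commutation $F_{(I-T_\delta)^j g}=(I-T_\delta)^j F_g$, apply the uniform Marchaud inequality of Theorem \ref{march} to $F_f\in\mathcal{C}(\boldsymbol{R})$, and close with the two-sided bounds $\Vert h\Vert_{p(\cdot)}\leq 24c_{7}\left( c_{3}\left( p\right)\right)\Vert F_h\Vert_{C\left(\boldsymbol{R}\right)}$ and $\Vert F_h\Vert_{C\left(\boldsymbol{R}\right)}\leq 2c_{5}\left( p^{+},c_{3}\left( p\right)\right)\Vert h\Vert_{p(\cdot)}$, yielding exactly $c_{14}=48c_{7}\left( c_{3}\left( p\right)\right) C_{9}\left( r,k\right) c_{5}\left( p^{+},c_{3}\left( p\right)\right)$. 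In fact you supply two details the paper leaves implicit (the Fubini justification of the commutation identity and the duality argument for sign-changing $h$), so the proposal is, if anything, more complete.
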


\begin{theorem}
\label{crv} Let $p\in P^{Log}\left( \boldsymbol{R}\right) $, $r\in \mathrm{N}
$ and $f\in L_{p\left( \cdot \right) }$. If%
\begin{equation*}
\sum\limits_{\nu =0}^{\infty }\nu ^{k-1}A_{\nu /2}\left( f\right) _{p\left(
\cdot \right) }<\infty 
\end{equation*}%
holds for some $k\in \mathrm{N}$, then $f^{\left( k\right) }\in L_{p\left(
\cdot \right) }$ and%
\begin{equation}
\Omega _{r}\left( f^{\left( k\right) },\frac{1}{\sigma }\right) _{p\left(
\cdot \right) }\leq c_{14}\left( \frac{1}{\sigma ^{r}}\sum\limits_{\nu
=0}^{\lfloor \sigma \rfloor }\left( \nu +1\right) ^{r+k-1}A_{\nu /2}\left(
f\right) _{p\left( \cdot \right) }+\sum\limits_{\nu =\lfloor \sigma \rfloor
+1}^{\infty }\nu ^{k-1}A_{\nu /2}\left( f\right) _{p\left( \cdot \right)
}\right)   \label{Sinv}
\end{equation}%
with $c_{14}:=c_{14}(r,k,p^{+},c_{3}\left( p\right) ):=48c_{7}\left(
c_{3}\left( p\right) \right) c_{5}\left( p^{+},c_{3}\left( p\right) \right)
2^{2k+r+2}$.
\end{theorem}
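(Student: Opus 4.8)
The plan is to deduce the estimate from its uniform-norm counterpart, Theorem \ref{turters}(ii), through the transference machinery of Section 1. The device is the auxiliary function $\Phi := F_f$ of (\ref{efef}): for a fixed admissible $G \in L_{p'(\cdot)} \cap C_c^\infty$ with $\|G\|_{p'(\cdot)} \le 1$ it is, by Theorem \ref{Fu}(a), a bounded uniformly continuous function on $\boldsymbol{R}$. Since the forward Steklov operator $\mathsf{S}_1$ commutes with translations and, by Theorem \ref{Fu}(b), with differentiation, while $T_\delta$ commutes with translation, one obtains the key identities $\Phi^{(k)} = F_{f^{(k)}}$ and $(I - T_{1/\sigma})^r \Phi^{(k)} = F_{(I - T_{1/\sigma})^r f^{(k)}}$, so that the difference operator and the $k$-th derivative in $L_{p(\cdot)}$ are mirrored by the corresponding operations on $\Phi$ in $\mathcal{C}(\boldsymbol{R})$. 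Throughout I would pass between the two sides using the two one-sided transference bounds extracted from the proof of Theorem \ref{tra}: $\|h\|_{p(\cdot)} \le 24 c_7(c_3(p)) \|F_h\|_{C(\boldsymbol{R})}$ for the dual-optimal $G$ (Theorem \ref{du}), and $\|F_h\|_{C(\boldsymbol{R})} \le 2 c_5(p^+, c_3(p)) \|h\|_{p(\cdot)}$ for every admissible $G$ (H\"older's inequality together with Theorem \ref{stekRR}).

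First I would settle the membership $f^{(k)} \in L_{p(\cdot)}$, which is part of the assertion. Using the de la Vall\`ee Poussin operator $J(\cdot,\cdot)$ of Remark \ref{rmrk}, which by transference (Theorem \ref{tra}, Corollary \ref{coroL}) is bounded on $L_{p(\cdot)}$ and reproduces $\mathcal{G}_{\nu/2}$, I would write the telescoping decomposition $f = J(f,1/2) + \sum_{j \ge 1}\left( J(f,2^{j-1}) - J(f,2^{j-2}) \right)$ in $L_{p(\cdot)}$; each summand is an IFFD of type $2^j$ whose $L_{p(\cdot)}$-norm is controlled by $A_{2^{j-2}}(f)_{p(\cdot)}$. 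Applying the Bernstein-type inequality for IFFD in $L_{p(\cdot)}$, itself obtained by transference from the classical estimate underlying Theorem \ref{bbsv}(a), gives $\| (J(f,2^{j-1}) - J(f,2^{j-2}))^{(k)} \|_{p(\cdot)} \le c\, 2^{jk} A_{2^{j-2}}(f)_{p(\cdot)}$. Summing and comparing the resulting dyadic series with $\sum_\nu \nu^{k-1} A_{\nu/2}(f)_{p(\cdot)}$ shows, by completeness of $L_{p(\cdot)}$, that the differentiated series converges; its sum is $f^{(k)} \in L_{p(\cdot)}$, and the same approximants justify the interchange $\Phi^{(k)} = F_{f^{(k)}}$.

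With $f^{(k)}$ in hand I would run the transference. Choosing the dual-optimal $G$,
\[
\Omega_r(f^{(k)}, 1/\sigma)_{p(\cdot)} = \| (I - T_{1/\sigma})^r f^{(k)} \|_{p(\cdot)} \le 24 c_7(c_3(p))\, \| (I - T_{1/\sigma})^r \Phi^{(k)} \|_{C(\boldsymbol{R})} = 24 c_7(c_3(p))\, \Omega_r(\Phi^{(k)}, 1/\sigma)_{C(\boldsymbol{R})}.
\]
Now I would apply Theorem \ref{turters}(ii) to the $\mathcal{C}(\boldsymbol{R})$-function $\Phi$; its hypothesis is met because the approximation numbers transfer as $A_\nu(\Phi)_{\mathcal{C}(\boldsymbol{R})} = A_\nu(F_f)_{\mathcal{C}(\boldsymbol{R})} \le 2 c_5(p^+, c_3(p))\, A_{\nu/2}(f)_{p(\cdot)}$. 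This last inequality is where the index halving originates: an admissible competitor of type $\nu$ for $\Phi$ is produced as $F_g$ with $g = J(f,\nu/2) \in \mathcal{G}_\nu$, and since $J(\cdot,\nu/2)$ only reproduces $\mathcal{G}_{\nu/2}$, its error is measured by $A_{\nu/2}(f)_{p(\cdot)}$, while the factor $2 c_5$ comes from the upper transference bound applied to $f - J(f,\nu/2)$. Substituting the transferred approximation numbers into the two sums of Theorem \ref{turters}(ii) and collecting the transference constants ($24 c_7$, the factor $2 c_5$ from the index shift, and the uniform-norm constant $2^{2k+r+1}$, with a further factor $2$ absorbed in the bookkeeping) yields (\ref{Sinv}) with $c_{14} = 48 c_7(c_3(p)) c_5(p^+, c_3(p)) 2^{2k+r+2}$.

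The principal obstacle, and the step demanding the most care, is the derivative bookkeeping: proving $f^{(k)} \in L_{p(\cdot)}$ and, simultaneously, that differentiation may be carried under the pairing defining $F_f$ so that $\Phi^{(k)} = F_{f^{(k)}}$. This is a chicken-and-egg situation, since one cannot differentiate $F_f$ freely before knowing that $f^{(k)}$ exists, and it is resolved precisely by the dyadic de la Vall\`ee Poussin construction above, whose convergence is guaranteed by the summability hypothesis $\sum_\nu \nu^{k-1} A_{\nu/2}(f)_{p(\cdot)} < \infty$. A secondary point to verify is that $F_g \in \mathcal{G}_\nu(\mathcal{C}(\boldsymbol{R}))$ whenever $g \in \mathcal{G}_\nu(L_{p(\cdot)})$, i.e.\ that the Steklov smoothing and the compactly supported pairing with $|G|$ do not enlarge the exponential type; this is a Paley--Wiener observation that keeps the competitors $F_{J(f,\nu/2)}$ inside $\mathcal{G}_\nu$.
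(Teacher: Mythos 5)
Your proposal takes essentially the same route as the paper: the paper's proof of Theorem \ref{crv} is the one-line remark that it is ``similar to that of Theorem \ref{Ters}'', i.e.\ precisely your scheme of passing to $F_f$ via the two one-sided transference bounds, applying the uniform-norm estimate of Theorem \ref{turters}(ii) to $F_f$, and transferring the approximation numbers with the index halving exactly as in the proof of Theorem \ref{Ters T} (where $A_{2\sigma}\left( F_{f}\right) _{\mathcal{C}(\boldsymbol{R})}\leq c_{13}A_{\sigma }\left( f\right) _{p\left( \cdot \right) }$ is established through the de la Vall\`{e}e Poussin operator). You in fact supply details the paper leaves entirely implicit --- the membership $f^{\left( k\right) }\in L_{p\left( \cdot \right) }$ via the dyadic telescoping, the identity $\left( F_{f}\right) ^{\left( k\right) }=F_{f^{\left( k\right) }}$, and the Paley--Wiener check that $F_{g}$ does not increase exponential type --- though your constant bookkeeping, like the paper's own, glosses over the factor $\left( 1+72c_{7}c_{5}\right) $ coming from the $L_{p\left( \cdot \right) }$-boundedness of the de la Vall\`{e}e Poussin operator in the bound for $\left\Vert f-J\left( f,\nu /2\right) \right\Vert _{p\left( \cdot \right) }$.
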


\subsection{Proofs of the results of section 3}

\begin{proof}[Proof of Lemma \protect\ref{lemma1}]
We note that (see \cite{AG}) the following inequality 
\begin{equation}
\left\Vert \left( I-T_{\delta }\right) f\right\Vert _{p\left( \cdot \right)
}\leq 2^{-1}c_{10}\delta \left\Vert f^{\text{ }\prime }\right\Vert _{p\left(
\cdot \right) }\text{,}\quad \delta >0  \label{eqn6*}
\end{equation}%
holds for $f\in L_{p\left( \cdot \right) }$. Then%
\begin{equation*}
\Omega _{r}\left( f,\delta \right) _{p\left( \cdot \right) }=\left\Vert
\left( I-T_{\delta }\right) ^{r}f\right\Vert _{p\left( \cdot \right) }\leq
...\leq 2^{-r}c_{10}^{r}\delta ^{r}\left\Vert f^{(r)}\right\Vert _{p\left(
\cdot \right) }\text{, }\delta >0
\end{equation*}%
for $f\in W_{L_{p\left( \cdot \right) }}^{r}$.
\end{proof}

\begin{proof}[Proof of Theorem \protect\ref{teo1}]
For any $g\in W_{L_{p\left( \cdot \right) }}^{r}\left( \Omega \right) $ we
have $F_{g}\in C^{r}\left( \Omega \right) $. Since $F_{f}$ is linear in $f,$%
\begin{equation*}
\left( I-T_{\delta }\right) ^{r}F_{f}=F_{\left( I-T_{\delta }\right) ^{r}f}%
\text{ \ \ and }\left( F_{g}\right) ^{\left( r\right) }=F_{g^{\left(
r\right) }}
\end{equation*}%
using Theorem \ref{tra} we obtain%
\begin{eqnarray*}
\left\Vert \left( I-T_{\delta }\right) ^{r}f\right\Vert _{p\left( \cdot
\right) } &\leq &24c_{7}\left( c_{3}\left( p\right) \right) \left\Vert
F_{\left( I-T_{\delta }\right) ^{r}f}\right\Vert _{C\left( \Omega \right) }
\\
&=&24c_{7}\left( c_{3}\left( p\right) \right) \left\Vert \left( I-T_{\delta
}\right) ^{r}F_{f}\right\Vert _{C\left( \Omega \right) } \\
&\leq &24c_{7}\left( c_{3}\left( p\right) \right) 2^{r}K_{r}\left(
F_{f},\delta ,C\left( \Omega \right) \right) _{C\left( \Omega \right) } \\
&\leq &24c_{7}\left( c_{3}\left( p\right) \right) 2^{r}\left\{ \left\Vert
F_{f}-F_{g}\right\Vert _{C\left( \Omega \right) }+\delta ^{r}\left\Vert
\left( F_{g}\right) ^{\left( r\right) }\right\Vert _{C\left( \Omega \right)
}\right\} \\
&=&24c_{7}\left( c_{3}\left( p\right) \right) 2^{r}\left\{ \left\Vert
F_{(f-g)}\right\Vert _{C\left( \Omega \right) }+\delta ^{r}\left\Vert
F_{g^{\left( r\right) }}\right\Vert _{C\left( \Omega \right) }\right\} \\
&\leq &48c_{7}\left( c_{3}\left( p\right) \right) 2^{r}c_{5}\left(
p^{+},c_{3}\left( p\right) \right) \left\{ \left\Vert f-g\right\Vert
_{p\left( \cdot \right) }+\delta ^{r}\left\Vert g^{\left( r\right)
}\right\Vert _{p\left( \cdot \right) }\right\} .
\end{eqnarray*}%
Taking infimum and considering definition of \textit{K}-functional one gets%
\begin{equation*}
\left\Vert \left( I-T_{\delta }\right) ^{r}f\right\Vert _{p\left( \cdot
\right) }\leq 48c_{7}\left( c_{3}\left( p\right) \right) 2^{r}c_{5}\left(
p^{+},c_{3}\left( p\right) \right) K_{r}\left( f,\delta ,p\left( \cdot
\right) \right) _{p\left( \cdot \right) }.
\end{equation*}%
Now we consider the opposite direction of the last inequality. For%
\begin{equation*}
g\left( \cdot \right) =\sum\limits_{l=1}^{r}\left( -1\right) ^{l-1}\binom{r}{%
l}T_{\delta }^{2rl}f\left( \cdot \right)
\end{equation*}%
we have%
\begin{eqnarray*}
K_{r}\left( f,\delta ,p\left( \cdot \right) \right) _{p\left( \cdot \right)
} &\leq &\left\Vert f-g\right\Vert _{p\left( \cdot \right) }+\delta
^{r}\left\Vert \frac{d^{r}}{dx^{r}}g\right\Vert _{p\left( \cdot \right) } \\
&\leq &24c_{7}\left( c_{3}\left( p\right) \right) \left\{ \left\Vert
F_{(f-g)}\right\Vert _{C\left( \Omega \right) }+\delta ^{r}\left\Vert
F_{g^{\left( r\right) }}\right\Vert _{C\left( \Omega \right) }\right\} \\
&=&24c_{7}\left( c_{3}\left( p\right) \right) \left\{ \left\Vert
F_{f}-F_{g}\right\Vert _{C\left( \Omega \right) }+\delta ^{r}\left\Vert
\left( F_{g}\right) ^{\left( r\right) }\right\Vert _{C\left( \Omega \right)
}\right\}
\end{eqnarray*}%
\begin{equation*}
\leq 24c_{7}\left( c_{3}\left( p\right) \right) \left\{ \left\Vert \left( I%
\text{-}T_{\delta }^{2r}\right) ^{r}F_{f}\right\Vert _{C\left( \Omega
\right) }\text{+}\delta ^{r}\left\Vert \left( \sum\limits_{l=1}^{r}\left( 
\text{-}1\right) ^{l\text{-}1}\binom{r}{l}T_{\delta }^{2rl}F_{f}\right)
^{\left( r\right) }\right\Vert _{C\left( \Omega \right) }\right\}
\end{equation*}%
\begin{equation*}
=24c_{7}\left( c_{3}\left( p\right) \right) \left\{ \left\Vert \left(
I-T_{\delta }^{2r}\right) ^{r}F_{f}\right\Vert _{C\left( \Omega \right)
}+\sum\limits_{l=1}^{r}\left\vert \binom{r}{l}\right\vert \delta
^{r}\left\Vert \left( T_{\delta }^{2rl}F_{f}\right) ^{\left( r\right)
}\right\Vert _{C\left( \Omega \right) }\right\}
\end{equation*}%
\begin{equation*}
\leq 24c_{7}\left( c_{3}\left( p\right) \right) \left\{ (2r)^{r}\left\Vert
\left( I-T_{\delta }\right) ^{r}F_{f}\right\Vert _{C\left( \Omega \right)
}+2^{r}(34)^{r}\left\Vert \left( I-T_{\delta }\right) ^{r}F_{f}\right\Vert
_{C\left( \Omega \right) }\right\}
\end{equation*}%
\begin{equation*}
=24c_{7}\left( c_{3}\left( p\right) \right) \left\{
(2r)^{r}+2^{r}(34)^{r}\right\} \left\Vert F_{\left( I-T_{\delta }\right)
^{r}f}\right\Vert _{C\left( \Omega \right) }
\end{equation*}%
\begin{equation*}
\leq 48c_{7}\left( c_{3}\left( p\right) \right) \left\{
(2r)^{r}+2^{r}(34)^{r}\right\} c_{5}\left( p^{+},c_{3}\left( p\right)
\right) \left\Vert \left( I-T_{\delta }\right) ^{r}f\right\Vert _{p\left(
\cdot \right) }.
\end{equation*}
\end{proof}

\begin{proof}[Proof of Theorem \protect\ref{rem1}]
Properties (1) and (2), by definition of $\Omega _{r}\left( f,\delta \right)
_{p\left( \cdot \right) }$ and the triangle inequality of $L_{p\left( \cdot
\right) }$ are clearly valid. By using \cite[Theorem 10.1]{Dit} and \cite[%
Lemma 2]{Israfil}, the relation \eqref{P3} is satisfied.
\end{proof}

\begin{proof}[Proof of Corollary \protect\ref{co}]
We have%
\begin{equation*}
\frac{\Omega _{r}\left( f,\lambda \delta \right) _{p\left( \cdot \right) }}{%
\left( 1+\lfloor \lambda \rfloor \right) ^{r}\Omega _{r}\left( f,\delta
\right) _{p\left( \cdot \right) }}\leq \frac{48c_{7}\left( c_{3}\left(
p\right) \right) 2^{r}c_{5}\left( p^{+},c_{3}\left( p\right) \right) }{%
\left( 1+\lfloor \lambda \rfloor \right) ^{r}}\frac{K_{r}\left( f,\lambda
\delta ,p\left( \cdot \right) \right) _{p\left( \cdot \right) }}{\Omega
_{r}\left( f,\delta \right) _{p\left( \cdot \right) }}
\end{equation*}%
\begin{eqnarray*}
&\leq &\frac{\left( 48\right) ^{2}c_{7}^{2}\left( c_{3}\left( p\right)
\right) 2^{r}c_{5}^{2}\left( p^{+},c_{3}\left( p\right) \right) }{\left(
1+\lfloor \lambda \rfloor \right) ^{r}}\frac{\left( 1+\lfloor \lambda
\rfloor \right) ^{r}}{1}\left\{ (2r)^{r}+2^{r}(34)^{r}\right\} \\
&=&\left( 48\right) ^{2}c_{7}^{2}\left( c_{3}\left( p\right) \right)
2^{r}c_{5}^{2}\left( p^{+},c_{3}\left( p\right) \right) \left\{
(2r)^{r}+2^{r}(34)^{r}\right\} .
\end{eqnarray*}
\end{proof}

\begin{proof}[Proof of Theorem \protect\ref{jak}]
First we obtain%
\begin{equation}
A_{2\sigma }\left( f\right) _{p\left( \cdot \right) }\leq 30\pi
8^{r}c_{5}\left( p^{+},c_{3}\left( p\right) \right) c_{7}\left( c_{3}\left(
p\right) \right) c_{8}\left( r\right) \left\Vert \left( I\text{-}T_{1/\left(
2\sigma \right) }\right) ^{r}f\right\Vert _{p\left( \cdot \right) }
\label{JE}
\end{equation}%
and (\ref{JJ}) follows from (\ref{JE}). Let $g_{\sigma }$ be an exponential
type entire function of degree $\leq \sigma $, belonging to $\mathcal{C}(%
\boldsymbol{R})$, as best approximation of $F_{f}\in \mathcal{C}(\boldsymbol{%
R})$. Since $F_{V_{\sigma }f}=V_{\sigma }F_{f}$ and $V_{\sigma }g_{\sigma
}=g_{\sigma }$, there holds%
\begin{equation*}
A_{2\sigma }\left( f\right) _{p\left( \cdot \right) }\leq \left\Vert
f-V_{\sigma }f\right\Vert _{p\left( \cdot \right) }\leq 24c_{7}\left(
c_{3}\left( p\right) \right) \left\Vert F_{f-V_{\sigma }f}\right\Vert _{%
\mathcal{C}(\boldsymbol{R})}
\end{equation*}%
\begin{equation*}
=24c_{7}\left( c_{3}\left( p\right) \right) \left\Vert F_{f}-V_{\sigma
}F_{f}\right\Vert _{\mathcal{C}(\boldsymbol{R})}
\end{equation*}%
\begin{equation*}
=24c_{7}\left( c_{3}\left( p\right) \right) \left\Vert F_{f}-g_{\sigma
}+g_{\sigma }-V_{\sigma }F_{f}\right\Vert _{\mathcal{C}(\boldsymbol{R})}
\end{equation*}%
\begin{equation*}
=24c_{7}\left( c_{3}\left( p\right) \right) \left\Vert F_{f}-g_{\sigma
}+V_{\sigma }g_{\sigma }-V_{\sigma }F_{f}\right\Vert _{\mathcal{C}(%
\boldsymbol{R})}
\end{equation*}%
\begin{equation*}
\leq 24c_{7}\left( c_{3}\left( p\right) \right) (A_{\sigma }\left(
F_{f}\right) _{\mathcal{C}(\boldsymbol{R})}+\frac{3}{2}A_{\sigma }\left(
F_{f}\right) _{\mathcal{C}(\boldsymbol{R})})=12c_{7}\left( c_{3}\left(
p\right) \right) A_{\sigma }\left( F_{f}\right) _{\mathcal{C}(\boldsymbol{R}%
)}.
\end{equation*}

For any $g\in W_{\mathcal{C}(\boldsymbol{R})}^{r}$%
\begin{equation*}
A_{\sigma }\left( u\right) _{\mathcal{C}(\boldsymbol{R})}\leq A_{\sigma
}\left( u-g\right) _{\mathcal{C}(\boldsymbol{R})}+A_{\sigma }\left( g\right)
_{\mathcal{C}(\boldsymbol{R})}
\end{equation*}%
\begin{equation*}
\leq \left\Vert u-g\right\Vert _{\mathcal{C}(\boldsymbol{R})}+\frac{5\pi }{4}%
\frac{4^{r}}{\sigma ^{r}}\left\Vert \frac{d^{r}}{dx^{r}}g\right\Vert _{%
\mathcal{C}(\boldsymbol{R})}
\end{equation*}%
\begin{equation*}
\leq \frac{5\pi 4^{r}}{4}K_{r}\left( u,\frac{1}{\sigma },\mathcal{C}(%
\boldsymbol{R})\right) _{\mathcal{C}(\boldsymbol{R})}\leq \frac{5\pi 8^{r}}{4%
}K_{r}\left( u,\frac{1}{2\sigma },\mathcal{C}(\boldsymbol{R})\right) _{%
\mathcal{C}(\boldsymbol{R})}
\end{equation*}%
\begin{equation*}
\leq \frac{5\pi 8^{r}}{4}c_{8}\left( r\right) \left\Vert \left( I-T_{\frac{1%
}{2\sigma }}\right) ^{r}u\right\Vert _{\mathcal{C}(\boldsymbol{R})}.
\end{equation*}%
Therefore%
\begin{eqnarray*}
A_{2\sigma }\left( f\right) _{p\left( \cdot \right) } &\leq &12c_{7}\left(
c_{3}\left( p\right) \right) A_{\sigma }\left( F_{f}\right) _{\mathcal{C}(%
\boldsymbol{R})} \\
&\leq &15\pi 8^{r}c_{7}\left( c_{3}\left( p\right) \right) c_{8}\left(
r\right) \left\Vert \left( I-T_{\frac{1}{2\sigma }}\right)
^{r}F_{f}\right\Vert _{\mathcal{C}(\boldsymbol{R})}
\end{eqnarray*}%
\begin{equation*}
=15\pi 8^{r}c_{7}\left( c_{3}\left( p\right) \right) c_{8}\left( r\right)
\left\Vert F_{\left( I-T_{1/\left( 2\sigma \right) }\right)
^{r}f}\right\Vert _{\mathcal{C}(\boldsymbol{R})}
\end{equation*}%
\begin{equation*}
\leq 30\pi 8^{r}c_{5}\left( p^{+},c_{3}\left( p\right) \right) c_{7}\left(
c_{3}\left( p\right) \right) c_{8}\left( r\right) \left\Vert \left(
I-T_{1/\left( 2\sigma \right) }\right) ^{r}f\right\Vert _{p\left( \cdot
\right) }.
\end{equation*}
\end{proof}

\begin{proof}[Proof of Theorem \protect\ref{Ters T}]
Let $g_{\sigma }$ be an exponential type entire function of degree $\leq
\sigma $, belonging to $L^{p(\cdot )}$, as best approximation of $f\in
L^{p(\cdot )}$. Then%
\begin{eqnarray*}
\Omega _{r}\left( f,\delta \right) _{p(\cdot )} &=&\left\Vert \left(
I-T_{\delta }\right) ^{r}f\right\Vert _{p\left( \cdot \right) }\leq
24c_{7}\left( c_{3}\left( p\right) \right) \left\Vert F_{\left( I-T_{\delta
}\right) ^{r}f}\right\Vert _{\mathcal{C}(\boldsymbol{R})} \\
&=&24c_{7}\left( c_{3}\left( p\right) \right) \left\Vert \left( I-T_{\delta
}\right) ^{r}F_{f}\right\Vert _{\mathcal{C}(\boldsymbol{R})}
\end{eqnarray*}%
\begin{equation*}
\leq 12c_{7}\left( c_{3}\left( p\right) \right) \left( 1+2^{2r-1}\right)
2^{r}\delta ^{r}\left( A_{0}\left( F_{f}\right) _{\mathcal{C}(\boldsymbol{R}%
)}+\int_{1/2}^{1/\delta }u^{r-1}A_{u}\left( F_{f}\right) _{\mathcal{C}(%
\boldsymbol{R})}du\right) 
\end{equation*}%
\begin{equation*}
\leq c_{13}12c_{7}\left( c_{3}\left( p\right) \right) \left(
1+2^{2r-1}\right) 2^{r}\delta ^{r}\left( A_{0}\left( f\right) _{p\left(
\cdot \right) }+\int_{1/2}^{1/\delta }u^{r-1}A_{u/2}\left( f\right)
_{p\left( \cdot \right) }du\right) 
\end{equation*}%
because%
\begin{equation*}
A_{2\sigma }\left( F_{f}\right) _{\mathcal{C}(\boldsymbol{R})}\leq
\left\Vert F_{f}\text{-}V_{\sigma }F_{f}\right\Vert _{\mathcal{C}(%
\boldsymbol{R})}=\left\Vert F_{f-V_{\sigma }f}\right\Vert _{\mathcal{C}(%
\boldsymbol{R})}\leq 2c_{5}\left( p^{+},c_{3}\left( p\right) \right)
\left\Vert f\text{-}V_{\sigma }f\right\Vert _{p\left( \cdot \right) }
\end{equation*}%
\begin{eqnarray*}
&=&2c_{5}\left( p^{+},c_{3}\left( p\right) \right) \left\Vert f-g_{\sigma
}+g_{\sigma }-V_{\sigma }f\right\Vert _{p\left( \cdot \right) } \\
&\leq &2c_{5}\left( p^{+},c_{3}\left( p\right) \right) \left( \left\Vert
f-g_{\sigma }\right\Vert _{p\left( \cdot \right) }+\left\Vert V_{\sigma
}g_{\sigma }-V_{\sigma }f\right\Vert _{p\left( \cdot \right) }\right)  \\
&\leq &2c_{5}\left( p^{+},c_{3}\left( p\right) \right) \left( \left\Vert
f-g_{\sigma }\right\Vert _{p\left( \cdot \right) }+72c_{7}\left( c_{3}\left(
p\right) \right) c_{5}\left( p^{+},c_{3}\left( p\right) \right) \left\Vert
g_{\sigma }-f\right\Vert _{p\left( \cdot \right) }\right)  \\
&=&2c_{5}\left( p^{+},c_{3}\left( p\right) \right) \left( 1+72c_{7}\left(
c_{3}\left( p\right) \right) c_{5}\left( p^{+},c_{3}\left( p\right) \right)
\right) A_{\sigma }\left( f\right) _{_{p\left( \cdot \right) }}.
\end{eqnarray*}
\end{proof}

\begin{proof}[Proof of Theorem \protect\ref{Ters}]
Let $g_{\sigma }$ be an exponential type entire function of degree $\leq
\sigma $, belonging to $L^{p(\cdot )}$, as best approximation of $f\in
L_{p\left( \cdot \right) }$. Then%
\begin{eqnarray*}
\Omega _{r}\left( f,t\right) _{p(\cdot )} &=&\left\Vert \left(
I-T_{t}\right) ^{r}f\right\Vert _{p\left( \cdot \right) }\leq 24c_{7}\left(
c_{3}\left( p\right) \right) \left\Vert F_{\left( I-T_{t}\right)
^{r}f}\right\Vert _{\mathcal{C}(\boldsymbol{R})} \\
&=&24c_{7}\left( c_{3}\left( p\right) \right) \left\Vert \left(
I-T_{t}\right) ^{r}F_{f}\right\Vert _{\mathcal{C}(\boldsymbol{R})}
\end{eqnarray*}%
\begin{equation*}
\leq 24c_{7}\left( c_{3}\left( p\right) \right) C_{9}\left( r,k\right)
t^{r}\int_{t}^{1}\frac{\left\Vert \left( I-T_{t}\right)
^{r+k}F_{f}\right\Vert _{\mathcal{C}(\boldsymbol{R})}}{u^{r+1}}du
\end{equation*}%
\begin{equation*}
=24c_{7}\left( c_{3}\left( p\right) \right) C_{9}\left( r,k\right)
t^{r}\int_{t}^{1}\frac{\left\Vert F_{\left( I-T_{t}\right)
^{r+k}f}\right\Vert _{\mathcal{C}(\boldsymbol{R})}}{u^{r+1}}du
\end{equation*}%
\begin{equation*}
\leq 48c_{7}\left( c_{3}\left( p\right) \right) C_{9}\left( r,k\right)
c_{5}\left( p^{+},c_{3}\left( p\right) \right) t^{r}\int_{t}^{1}\frac{%
\left\Vert \left( I-T_{t}\right) ^{r+k}f\right\Vert _{p(\cdot )}}{u^{r+1}}du
\end{equation*}%
\begin{equation*}
=48c_{7}\left( c_{3}\left( p\right) \right) C_{9}\left( r,k\right)
c_{5}\left( p^{+},c_{3}\left( p\right) \right) t^{r}\int_{t}^{1}\frac{\Omega
_{r+k}\left( f,t\right) _{p(\cdot )}}{u^{r+1}}du.
\end{equation*}
\end{proof}

\begin{proof}[Proof of Theorem \protect\ref{crv}]
Proof of (\ref{Sinv}) is similar to that of proof of Theorem \ref{Ters}.
\end{proof}

\end{document}